\title[Weingarten calculus via orthogonality]
{Weingarten calculus via orthogonality relations: new applications}
\author {Beno\^\i{}t Collins}
\address{Department of Mathematics, Kyoto University} 
\email{collins@math.kyoto-u.ac.jp}
\author {Sho Matsumoto}
\address{Graduate School of Science and Engineering, Kagoshima University} 
\email{shom@sci.kagoshima-u.ac.jp}
\numberwithin{equation}{section}
\theoremstyle{plain}
\newtheorem{lemma}{Lemma}[section]
\newtheorem{theorem}[lemma]{Theorem}
\newtheorem{proposition}[lemma]{Proposition}
\theoremstyle{definition}
\newtheorem{definition}[lemma]{Definition}
\theoremstyle{remark}
\newtheorem{remark}[lemma]{Remark}
\newtheorem{example}[lemma]{Example}
\DeclareMathOperator{\Tr}{Tr}
\DeclareMathOperator{\Wg}{Wg}
\DeclareMathOperator{\U}{U}
\DeclareMathOperator{\Ort}{O}
\DeclareMathOperator{\Smp}{Sp} 
\DeclareMathOperator{\GL}{GL} 
\DeclareMathOperator{\COE}{COE}
\newcommand{\Sy}[1]{\mathcal{S}_{#1}}
\newcommand{\Pair}[1]{\mathcal{P}_2({#1})}
\newcommand{\C}{{\mathbb{C}}}
\newcommand{\mf}[1]{\mathfrak{#1}}
\newcommand{\mcal}[1]{\mathcal{#1}}
\begin{document}

\begin{abstract}
Weingarten calculus is a completely general and explicit method to compute the moments of the Haar measure on 
compact subgroups of matrix algebras. Particular cases of this calculus were initiated by theoretical physicists --
including Weingarten, after whom this calculus was coined by the first author, after investigating it
systematically. 
Substantial progress was achieved subsequently by the second author and coworkers, based on representation theoretic
and combinatorial techniques. All formulas of `Weingarten calculus' are in the spirit of Weingarten's seminal paper
\cite{weingarten}.
However, modern proofs are very different from Weingarten's initial ideas. 
In this paper, we revisit Weingarten's initial proof and we illustrate its power
by uncovering two new important applications: 
(i) a uniform bound on the Weingarten function, that subsumes existing uniform bounds, and is optimal up to a polynomial factor,
and (ii) an extension of Weingarten calculus to symmetric spaces and conceptual proofs of identities established by the second author.
\end{abstract}

\maketitle

\section{Introduction}

Let $G$ be a compact subgroup of $M_d(\C )$, $\mu$ its probability Haar measure,  and $u_{ij}: G\to \C$ the $ij$-coordinate function.
A \emph{Weingarten type} formula is a formula that computes 
$\int_G u_{i_1j_1}\ldots d\mu $. It is in general given as a sum over conditions on the indices $i, j$'s, of functions
called Weingarten functions. 
For example, in the case of unitary groups, the conditions are labeled by permutations, and in the orthogonal group, they 
are given by pair partitions. We refer to sections \ref{sec:Wg-unitary} and \ref{sec:Wg-orthogonal} for details. 

Weingarten's initial motivation (\cite{weingarten}) was to consider a sequence of subgroups $G_d$ of $M_d(\C )$, typically the unitary or 
orthogonal groups, and rather than describing precisely the integration formula, he was interested in the large $d$ asymptotics
of integrals. 

His basic observation was that the Weingarten functions satisfies a family of linear equations, and under an appropriate 
rescaling of the Weingarten functions by polynomials in $d$, they were satisfying a system that was upper triangular in the large
$d$ limit, and therefore, invertible for $d$ large enough. 
This approach was very slick, but one drawback was that it did not give precise information on the size of $d$ for which there 
was a unique solution. Until very recently, this idea has been dropped and replaced by equations arising from representation theory
\cite{collins-imrn,collins-sniady,matsumoto-novak} and others.
Another drawback is that the concrete methods to solve this linear system were not developed. 
The purpose of this paper is to revisit Weingarten's original approach, and address the aforementioned drawbacks. 

Specifically, the first author, together with Brannan, in \cite{brannan-collins}, realized recently that Weingarten's original 
approach was unavoidable when 
dealing with the asymptotics of Weingarten functions in the case of 
compact quantum groups, and could be improved into a very powerful and conceptual tool in this context. 

In  this paper, we revisit this tool in the context of the classical group. 
The formulas that one obtains are closely related to results obtained by Matsumoto and Novak in 
\cite{matsumoto-novak}, but they
are more elementary and more general, in the sense that no knowledge on 
Jucys-Murphy elements is needed, and that the technique can be adapted to more general cases.

Weingarten calculus has proven very useful in many situations, including free probability, random matrix theory, 
quantum information theory, representation theory, matrix integrals, and others (we refer to most of the
recent items of the bibliography for applications). 
One of the strength of this calculus is the very interesting properties of the Weingarten function in the large $d$ limit.
For example, in the unitary case,
$$\Wg (\sigma , d) = d^{-k-|\sigma |} \mathrm{Moeb} (\sigma )$$
We refer for section \ref{section:uniform_bounds} for notation.  

In some cases, it is desirable to obtain uniform estimates on Wg. Such bounds have been obtained by the first author and other 
coauthors in \cite[Theorem 4.1]{CGGPG}, and also by \cite[Lemma 16]{montanaro}.

In a related domain, in random matrix theory, moment methods are a widely used and powerful tool. 
On the one hand, the proofs of asymptotic freeness of unitarily invariant random matrix under the weakest possible assumptions are
obtained with Weingarten techniques (\cite{collins-imrn}). 
On the other hand, uniform moment estimates of a power or a random matrix degree depending on the dimension, and high enough (typically 
much higher than the logarithm of the dimension)
give norm convergence estimates. 
We refer to \cite{soshnikov} 
for one of the first seminal applications of this method to random matrix theory.

Putting these two observations together, it is very natural to try to achieve a fine and uniform convergence of 
the behaviour of the Weingarten function as the dimension goes to infinity, and the size of the permutation group too. 
This problem of finding a uniform estimate has also applications in more unexpected fields, such as
Quantum Information Theory, cf e.g. \cite{montanaro, CGGPG} for weaker uniform bounds with specific applications. 
See also  \cite{berkolaiko-kuipers}.

In this respect, our main result is a uniform bound within a polynomial factor, which is obtained in 
Theorems \ref{wg-estimate}, \ref{wg-symplectic-bound}, and \ref{wg-orthogonal-bound}
and which we record in the theorem below just in the case of the unitary group -- the other cases covered in this paper showcase results
of similar flavour (albeit with different proofs)
\begin{theorem}
For any $\sigma\in \Sy{k}$ and  
$d > \sqrt{6} k^{7/4}$, 
$$\frac{1}{1-\frac{k-1}{d^2}} \le 
\frac{d^{k+|\sigma|} \Wg^{\U} (\sigma,d)}{\mathrm{Moeb}(\sigma)}  
\le \frac{1}{1- \frac{6 k^{7/2}}{d^2}}.
$$
In addition, the l.h.s inequality is valid for any $d \ge k$.
\end{theorem}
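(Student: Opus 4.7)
The starting point is the orthogonality identity satisfied by the unitary Weingarten function, which follows directly from the unitarity relation $\sum_j u_{ij}\overline{u_{i'j}}=\delta_{ii'}$ applied $k$ times: for every $\sigma \in \Sy{k}$,
$$\sum_{\tau \in \Sy{k}} d^{\#(\sigma\tau^{-1})}\,\Wg^{\U}(\tau, d) = \delta_{\sigma, e},$$
where $\#(\pi)$ is the number of cycles of $\pi$. Introducing the rescaled quantity $\tilde{W}(\tau) := d^{k+|\tau|}\Wg^{\U}(\tau, d)$ and using $\#(\pi)=k-|\pi|$, multiplication by $d^{|\sigma|}$ converts this into
$$\sum_{\tau \in \Sy{k}} d^{-\Delta(\sigma,\tau)}\tilde{W}(\tau) = \delta_{\sigma, e}, \qquad \Delta(\sigma,\tau):=|\sigma\tau^{-1}|+|\tau|-|\sigma|.$$
By the triangle inequality for the word metric on $\Sy{k}$ and parity, $\Delta(\sigma,\tau)$ is a nonnegative even integer, vanishing exactly on the geodesic set $\mathcal{G}(\sigma)=\{\tau : |\sigma\tau^{-1}|+|\tau|=|\sigma|\}$. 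This places the whole problem in a perturbative framework with small parameter $d^{-2}$.

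The function $\mathrm{Moeb}$ is characterized as the unique solution to the leading ($d=\infty$) system $\sum_{\tau\in\mathcal{G}(\sigma)}\mathrm{Moeb}(\tau)=\delta_{\sigma,e}$, which is triangular in $|\sigma|$. The plan is to view $\tilde{W}$ as a perturbation of $\mathrm{Moeb}$ and to invert the corrected system by a Neumann expansion in $d^{-2}$. Subtracting the two identities and inverting the triangular geodesic part, I obtain
$$\tilde{W}(\sigma)-\mathrm{Moeb}(\sigma) = -\sum_{m\ge 1} d^{-2m}\!\!\!\sum_{\tau:\,\Delta(\sigma,\tau)=2m}\!\!\!(\text{triangular-inverse weight})\cdot\tilde{W}(\tau),$$
which can be iterated to bound $|\tilde{W}(\sigma)/\mathrm{Moeb}(\sigma)-1|$ by a geometric series whose ratio involves the number of off-geodesic permutations at each excess level, weighted by $d^{-2}$ and by pointwise bounds on $|\mathrm{Moeb}|$.

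For the upper bound, summing the geometric series under the assumption $d^2 > 6k^{7/2}$ yields $\tilde{W}/\mathrm{Moeb}\le 1/(1-6k^{7/2}/d^2)$; the exponent $7/2$ emerges from combining (i) a polynomial-in-$k$ count of permutations at each prescribed excess $2m$ — most naturally obtained by encoding off-geodesic $\tau$'s via factorizations into transpositions — with (ii) Catalan-type bounds on $|\mathrm{Moeb}(\tau)|$. For the lower bound, $\tilde{W}/\mathrm{Moeb}\ge 1/(1-(k-1)/d^2)$ should follow from the first-order ($m=1$) correction together with a sign-coherence argument: $\tilde{W}(\tau)$ and $\mathrm{Moeb}(\tau)$ carry the same sign, and exactly $k-1$ transpositions are responsible for the leading $d^{-2}$ contribution along the geodesic structure, which also explains why this half of the inequality remains valid for all $d\ge k$ without the iteration threshold $d > \sqrt{6}k^{7/4}$.

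The main obstacle is the combinatorial counting of permutations at fixed excess $\Delta(\sigma,\tau)$: a naive bound of $k!$ is catastrophic, and one must exploit the fact that off-geodesic contributions correspond to localized ``defects'' that each cost one transposition, so the count grows only polynomially in $k$ per unit of excess. Establishing the correct polynomial power and tracking the multiplicative constant through the Neumann series — which is what ultimately yields the explicit $6k^{7/2}$ and the threshold $\sqrt{6}k^{7/4}$ — is the technical heart of the argument.
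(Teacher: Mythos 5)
Your starting point is genuinely different from the paper's: you invoke the full Gram--matrix orthogonality $\sum_{\tau\in\Sy{k}} d^{\#(\sigma\tau^{-1})}\Wg^{\U}(\tau,d)=\delta_{\sigma,e}$ and propose a Neumann iteration around the triangular geodesic system, whereas the paper applies unitarity to a \emph{single} row to get the recursion $d\,\Wg^{\U}(\sigma,d)=-\sum_{i<k}\Wg^{\U}((i,k)\sigma,d)+\delta_{\sigma(k)=k}\Wg^{\U}(\sigma^{\downarrow},d)$, converts it into a path count on a ``Weingarten graph'' (equivalently, monotone factorizations), and obtains the exact expansion $(-1)^{|\sigma|}d^{k+|\sigma|}\Wg^{\U}(\sigma,d)=\sum_{g\ge0}\#P(\sigma,|\sigma|+2g)\,d^{-2g}$ with \emph{nonnegative} coefficients. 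That difference is not cosmetic, and it is where your argument has two genuine gaps.

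First, the lower bound $1/(1-(k-1)/d^2)\ge 1$ asserts that the ratio is at least $1$; a perturbative scheme that controls $|\tilde W(\sigma)/\mathrm{Moeb}(\sigma)-1|$ can only deliver $\ge 1-\varepsilon$, never $\ge 1$, unless you first establish that every correction term has the same sign as $\mathrm{Moeb}(\sigma)$. You invoke exactly this (``$\tilde W(\tau)$ and $\mathrm{Moeb}(\tau)$ carry the same sign'') as an ingredient, but it is essentially equivalent to the positivity being proved and cannot be assumed; in the paper it is an automatic consequence of the manifestly nonnegative path expansion, and the elementary injection $(k-1)\,\#P(\sigma,l)\le\#P(\sigma,l+2)$ then yields the lower bound for all $d\ge k$. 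Second, the entire quantitative content --- the polynomial count of off-geodesic contributions at each excess, the Catalan-ratio bound $\mathrm{Cat}(r+s+1)/(\mathrm{Cat}(r)\,\mathrm{Cat}(s))\le 6k^{3/2}$ obtained from Stirling, and the induction on $g$ that compounds these into $\#P(\sigma,|\sigma|+2g)\le(6k^{7/2})^{g}\,\#P(\sigma,|\sigma|)$ --- is precisely what you defer as ``the technical heart,'' so the constant $6k^{7/2}$ and the threshold $\sqrt{6}k^{7/4}$ are asserted rather than derived. As written, the proposal is a plausible program, close in spirit to Weingarten's original triangular-system idea discussed in the introduction, but it is not yet a proof.
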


Let us note that this revisited approach to Weingarten calculus 
is related to, and implies results of
\cite{matsumoto-novak} in the unitary case
and from \cite{zinn-justin, matsumoto-ramanujan} in the orthogonal case.

Finally, Weingarten calculus extends beyond groups, to the context of symmetric spaces \cite{cartan}. 
Although push forward allow in principle to 
compute any Haar measure on a symmetric space \cite{collins-stolz}, 
the second author observed some phenomena intrinsic to some classes \cite{matsumoto-rm1,matsumoto-rm2}.
These phenomena were obtained by computation  without conceptual explanation. 
It turns out that in some cases, Weingarten's original approach supplies this conceptual explanation. 
This is the content of theorems \ref{thm:COE} and \ref{thm:AIII}.

This paper is organized as follows: 
After this introduction, section 2 revisits and conceptualizes Weingarten's original integration technique in the unitary context.
Section 3 uses section 2 to provide the best uniform bounds known so far. 
Section 4 handles sections 2 and 3 in the context of orthogonal and symplectic cases. 
Section 5 develops the Weingarten calculus on symmetric spaces.

\subsection*{Acknowledgements}
Both authors were supported by 
JSPS KAKENHI Grant Numbers 26800048, 25800062.   
They would like to thank an anonymous referee for very constructive comments
on the first version of the manuscript.  
BC acknowledges useful discussions with Mike Brannan.

\section{Unitary groups}

\subsection{Weingarten calculus}\label{sec:Wg-unitary}

Throughout this section, we suppose $d,k$ are positive integers with $d \ge k$.
For each permutation $\sigma \in \Sy{k}$, the {\em unitary Weingarten function}
$\Wg^{\U}(\sigma,d)$ is, by definition,
\begin{equation} \label{eq:definition-unitary-wg}
\Wg^{\U} (\sigma, d) = 
\int_{\U(d)} u_{11} u_{22} \cdots u_{kk}
\overline{u_{\sigma(1)1} u_{\sigma(2)2} \cdots u_{\sigma(k)k}} \, d\mu,
\end{equation}
where $d\mu = d\mu^{\U (d)}$ denotes the normalized Haar measure on $\U(d)$.
It is easy to see that
the function $\sigma \mapsto \Wg^{\U} (\sigma,d)$ is conjugacy-invariant, i.e.,
$$
\Wg^{\U} (\tau^{-1} \sigma \tau,d) =\Wg^{\U} ( \sigma,d) 
\qquad \text{for any $\sigma,\tau \in \Sy{k}$}.
$$
The Weingarten calculus for $\U(d)$ is described as follows.

\begin{lemma}[\cite{collins-imrn}]
\label{lemma:unitaryWC}
For four sequences 
$$
\mathbf{i}=(i_1,\dots,i_k), \quad
\mathbf{i}'=(i_1',\dots,i_k'), \quad 
\mathbf{j}=(j_1,\dots,j_k), \quad
\mathbf{j}'=(j_1',\dots,j_k')
$$
of positive integers in $\{1,2,\dots,d\}$, we have
\begin{align*}
& \int_{\U(d)} u_{i_1, j_1} u_{i_2, j_2} \cdots u_{i_k, j_k}
\overline{u_{i_1', j_1'} u_{i_2', j_2'} \cdots u_{i_k', j_k'}} \, d\mu \\
=& \sum_{\sigma \in \Sy{k}}
\sum_{\tau \in \Sy{k}} 
\delta_{\sigma} (\mathbf{i}, \mathbf{i}')
\delta_{\tau} (\mathbf{j}, \mathbf{j}')
\Wg^{\U}(\sigma\tau^{-1} ,d).
\end{align*}
Here $\delta_{\sigma} (\mathbf{i}, \mathbf{i}')$ is given by
$$
\delta_{\sigma} (\mathbf{i}, \mathbf{i}') =
\begin{cases}
1 & \text{if $i_{\sigma(r)} = i'_r$ for all $r$}, \\
0 & \text{otherwise}.
\end{cases}
$$
\end{lemma}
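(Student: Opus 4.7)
The plan is to realise the integral as a matrix element of the orthogonal projection onto the $\U(d)$-invariant subspace of a natural tensor product representation, and then to exhibit an explicit basis of that invariant subspace using the first fundamental theorem of invariant theory. Throughout, set $V = \C^d$.

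First, I introduce the operator
\[
P := \int_{\U(d)} U^{\otimes k} \otimes \overline{U}^{\otimes k}\, d\mu \;\in\; \End\bigl(V^{\otimes k} \otimes \overline{V}^{\otimes k}\bigr).
\]
Bi-invariance of the Haar measure implies $P^2 = P = P^*$, so $P$ is the orthogonal projection onto the subspace $H := (V^{\otimes k} \otimes \overline{V}^{\otimes k})^{\U(d)}$ of $\U(d)$-fixed vectors. Rewriting the product $\prod_r u_{i_r,j_r}\, \overline{u_{i'_r,j'_r}}$ as a single matrix element identifies the LHS of the lemma with $\bigl\langle e_{\mathbf{i}} \otimes \overline{e}_{\mathbf{i}'},\, P(e_{\mathbf{j}} \otimes \overline{e}_{\mathbf{j}'}) \bigr\rangle$. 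Next, I invoke the $\U(d)$-side of Schur--Weyl duality to describe $H$: for each $\sigma \in \Sy{k}$, the vector
\[
v_\sigma := \sum_{a_1,\dots,a_k=1}^d e_{a_1} \otimes \cdots \otimes e_{a_k} \otimes \overline{e}_{a_{\sigma(1)}} \otimes \cdots \otimes \overline{e}_{a_{\sigma(k)}}
\]
is $\U(d)$-invariant, these vectors span $H$, and the standing hypothesis $d \ge k$ forces them to be linearly independent; hence $\{v_\sigma\}_{\sigma \in \Sy{k}}$ is a basis of $H$. A direct computation of the inner product yields $\langle e_{\mathbf{i}} \otimes \overline{e}_{\mathbf{i}'},\, v_\sigma\rangle = \delta_\sigma(\mathbf{i},\mathbf{i}')$ in the notation of the lemma.

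Writing $P = \sum_{\sigma,\tau} W(\sigma,\tau)\, v_\sigma v_\tau^*$ for uniquely determined coefficients $W(\sigma,\tau)$ and substituting the inner products above gives
\[
\mathrm{LHS} \;=\; \sum_{\sigma,\tau \in \Sy{k}} W(\sigma,\tau)\, \delta_\sigma(\mathbf{i},\mathbf{i}')\, \delta_\tau(\mathbf{j},\mathbf{j}'),
\]
which already has the shape asserted by the lemma. To conclude, I must identify $W(\sigma,\tau)$ with $\Wg^{\U}(\sigma\tau^{-1},d)$. The equality $W(\sigma,\tau) = W(\sigma\tau^{-1},e)$ follows from the equivariance of $P$ under the $\Sy{k}\times\Sy{k}$-action on $V^{\otimes k} \otimes \overline{V}^{\otimes k}$ by independent permutation of tensor factors on each of the two sides, since this action commutes with the $\U(d)$-action and permutes the basis $\{v_\sigma\}$ by left/right translation. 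The remaining value $W(\pi, e)$ is then pinned down by specialising the identity to $\mathbf{i} = \mathbf{j} = (1,\dots,k)$, $\mathbf{i}' = (\pi(1),\dots,\pi(k))$, $\mathbf{j}' = (1,\dots,k)$: only $(\sigma,\tau) = (\pi,e)$ contributes on the right, while the LHS is exactly the defining integral \eqref{eq:definition-unitary-wg} of $\Wg^{\U}(\pi,d)$.

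I expect the main obstacle to be the invariant-theoretic input: establishing that the vectors $v_\sigma$ span $H$ and are linearly independent when $d \ge k$. The remaining steps, while requiring some care, amount to routine bookkeeping with inner products and with the $\Sy{k}\times\Sy{k}$-symmetry.
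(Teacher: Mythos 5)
Your proof is correct, and it is essentially the standard argument behind this lemma: the paper itself does not prove it but cites \cite{collins-imrn}, where the formula is obtained in exactly this way, by realising the integral as a matrix coefficient of the orthogonal projection onto the $\U(d)$-invariants of $V^{\otimes k}\otimes\overline{V}^{\otimes k}\cong\End(V^{\otimes k})$, identifying those invariants with the span of the permutation contractions $v_\sigma$ via Schur--Weyl duality, and recognising the coefficient matrix $W$ as the inverse of the Gram matrix, pinned down to $\Wg^{\U}$ by the specialisation you describe. The one input you correctly flag as nontrivial --- that the $v_\sigma$ span the invariants and are independent for $d\ge k$ --- is precisely the first fundamental theorem of invariant theory, and the standing hypothesis $d\ge k$ in this section makes the Gram matrix invertible, so no pseudo-inverse subtleties arise.
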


\subsection{Orthogonality relations}

We give orthogonality relations for  Weingarten functions $\Wg^{\U}(\cdot,d)$,
which comes from the orthogonal (or unitary) property of the random matrix $U$ itself.
This is first found in \cite{samuel}. See also \cite{GGPN} and its references.

\begin{proposition} 
For any $\sigma \in \Sy{k}$, we have
\begin{equation} \label{eq:recurrence-unitaryWg}
d \Wg^{\U}(\sigma,d)= - \sum_{i=1}^{k-1} \Wg^{\U} ((i,k) \sigma,d)+ 
\delta_{\sigma(k)=k} \Wg^{\U}(\sigma^{\downarrow},d). 
\end{equation}
Here $\sigma^{\downarrow} \in \Sy{k-1}$ is the restriction of $\sigma$ 
to the permutation on the set $\{1,2,\dots,k-1\}$
(if $\sigma(k)=k$)
 and $(i,k)$ is the transposition 
between $i$  and $k$.
Moreover, $\delta_{\sigma(k)=k}$ equals to $1$ if $\sigma$ fixes $k$, and 
to $0$ otherwise.
\end{proposition}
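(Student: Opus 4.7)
My plan is to exploit the fact that $U \in \U(d)$ is unitary, which yields the row orthonormality identity
\[
\sum_{a=1}^d u_{k,a}\,\overline{u_{\sigma(k), a}} = \delta_{k, \sigma(k)}.
\]
I would multiply this pointwise identity by the product
\[
P := u_{11} u_{22} \cdots u_{k-1, k-1} \,\overline{u_{\sigma(1), 1} u_{\sigma(2), 2} \cdots u_{\sigma(k-1), k-1}},
\]
and integrate against the Haar measure on $\U(d)$. Evaluating the resulting equation in two different ways produces the recurrence.

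On one side, the constant $\delta_{k,\sigma(k)}$ pulls out of the integral, and $\int P\, d\mu$ vanishes when $\sigma(k) \neq k$ (the row-index multisets on the $u$'s and $\bar u$'s disagree, killing every term in Lemma \ref{lemma:unitaryWC}) and equals $\Wg^{\U}(\sigma^\downarrow, d)$ when $\sigma(k)=k$ (directly from definition \eqref{eq:definition-unitary-wg} applied at level $k-1$). This reproduces the $\delta_{\sigma(k)=k}\,\Wg^{\U}(\sigma^\downarrow, d)$ term.

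On the other side, distributing the sum over $a$ inside the integral and applying Lemma \ref{lemma:unitaryWC} to each of the $d$ integrals is the combinatorial heart of the proof. Each has row data $\mathbf{i}=(1,\dots,k)$, $\mathbf{i}'=(\sigma(1),\dots,\sigma(k))$ and column data $\mathbf{j}=\mathbf{j}'=(1,\dots,k-1,a)$. Since $\mathbf{i}$ is injective, $\delta_\alpha(\mathbf{i}, \mathbf{i}')=1$ forces $\alpha=\sigma$. The $\beta$-analysis is the place to be careful: when $a \in \{k, \dots, d\}$ the sequence $\mathbf{j}$ is injective and only $\beta=e$ survives, contributing $\Wg^{\U}(\sigma,d)$; when $a=i \in \{1,\dots,k-1\}$ the value $i$ is repeated in $\mathbf{j}$ at positions $i$ and $k$, so both $\beta=e$ and $\beta=(i,k)$ contribute, giving $\Wg^{\U}(\sigma,d)+\Wg^{\U}(\sigma(i,k), d)$. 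Summing over $a$ produces $d\,\Wg^{\U}(\sigma,d) + \sum_{i=1}^{k-1}\Wg^{\U}(\sigma(i,k), d)$, where the leading $d$ appears as $(k-1)+(d-k+1)$.

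Matching the two evaluations gives the stated identity once we rewrite $\Wg^{\U}(\sigma(i,k), d)$ as $\Wg^{\U}((i,k)\sigma, d)$. This last step uses only that any class function satisfies $f(gh)=f(hg)$ (as $hg = h(gh)h^{-1}$), which applies because $\Wg^{\U}$ is conjugacy-invariant. The one subtle point throughout is the $\beta$-analysis in the regime $a \leq k-1$: the extra transposition $\beta=(i,k)$ permitted by the repeated column index is precisely what manufactures both the factor $d$ and the off-diagonal terms in the recurrence.
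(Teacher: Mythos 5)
Your proof is correct and follows essentially the same route as the paper's: summing the row-orthonormality relation against the remaining matrix entries, evaluating the resulting integral once via unitarity and once via Lemma \ref{lemma:unitaryWC}, with the same case split on whether the repeated column index $a$ lies below $k$. The only cosmetic difference is that you make explicit the conjugacy-invariance step $\Wg^{\U}(\sigma(i,k),d)=\Wg^{\U}((i,k)\sigma,d)$, which the paper uses silently.
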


\begin{proof}
Consider the sum of integrals
\begin{equation} \label{eq:sum_unitary}
\sum_{i=1}^d \int_{\U(d)} u_{11} \cdots u_{k-1,k-1} u_{k,i}
\overline{ u_{\sigma(1) 1} \cdots u_{\sigma(k-1),k-1} u_{\sigma(k),i}} d\mu.
\end{equation}
Since a matrix $U=(u_{ij})$ is unitary, we have $\sum_{i=1}^d u_{k,i} \overline{u_{\sigma(k),i}}
= \delta_{\sigma(k)=k}$
and therefore it equals
\begin{align}
& \delta_{\sigma(k)=k}\int_{\U(d)} u_{11} \cdots u_{k-1,k-1}
\overline{ u_{\sigma(1) 1} \cdots u_{\sigma(k-1),k-1} } d\mu \notag \\
=&
\delta_{\sigma(k)=k} \Wg^{\U} (\sigma^{\downarrow},d). 
\label{eq:sum_unitary2}
\end{align}

On the other hand, using Lemma \ref{lemma:unitaryWC} we have
\begin{align*}
& \int_{\U(d)} u_{11} \cdots u_{k-1,k-1} u_{k,i}
\overline{ u_{\sigma(1) 1} \cdots u_{\sigma(k-1),k-1} u_{\sigma(k),i}} d\mu \\
=& 
\begin{cases} 
\Wg^{\U} (\sigma,d) & \text{if $i \ge k$}, \\
\Wg^{\U} (\sigma,d)+\Wg^{\U} ((i,k)\sigma,d)  & \text{if $i < k$.}
\end{cases}
\end{align*}
In fact, in the notation of Lemma \ref{lemma:unitaryWC}, 
the delta symbol $\delta_{\tau}(\mathbf{j}, \mathbf{j}')$
with $\mathbf{j}=\mathbf{j}'=(1,\dots,k-1,i)$ survives only if
$\tau$ is the identity permutation or
the transposition $(i,k)$ with $i<k$.
Summing up them over $i$, we obtain 
$$
\eqref{eq:sum_unitary} = d \Wg^{\U}(\sigma,d)+ \sum_{i=1}^{k-1} \Wg^{\U}(
(i,k) \sigma,d)
$$ 
Combining this with \eqref{eq:sum_unitary2}, we obtain the proposition.
\end{proof}

\begin{example}
We use the one-row notation $[\sigma(1),\sigma(2), \dots, \sigma(k)]$
for $\sigma \in \Sy{k}$.
The relation \eqref{eq:recurrence-unitaryWg} with $k=1$ and $\sigma= [1] \in \Sy{1}$
gives the relation $d \Wg^{\U}([1],d)= \Wg^{\U} (\emptyset,d)=1$, and hence
$\Wg^{\U}([1],d)=\frac{1}{d}$.
Furthermore, for $k=2$ we find 
\begin{align*}
d \Wg^{\U}([1,2],d)=& - \Wg^{\U}([2,1],d)+ \Wg^{\U} ([1],d) \\
d \Wg^{\U}([2,1],d)=& - \Wg^{\U}([1,2],d).
\end{align*} 
Solving this linear system of equations, we obtain
$$
\Wg^{\U}([1,2],d)= \frac{1}{d^2-1}, \qquad 
\Wg^{\U}([2,1],d)= \frac{-1}{d(d^2-1)}.
$$ 
\end{example}

\subsection{Weingarten graphs} \label{subsection:Weingarten-graphs}

\begin{definition}
We define an infinite directed graph $\mathcal{G}^{\U}=(V,E)$ as follows. 
\begin{itemize}
\item The vertex set $V$ is $\bigsqcup_{k=0}^\infty \Sy{k}$.
Each vertex $v$ in $\Sy{k}$ is said to be of {\em level} $k$.
The vertex $\emptyset \in \Sy{0}$ is the unique element of level $0$.
\item For each $k \ge 2$, 
two vertices $\sigma, \tau$ of level $k$ (i.e. in $\Sy{k}$) 
are connected by a {\em solid arrow} as $\sigma \rightarrow \tau $ if and only if 
$$
\tau = (i,k) \sigma \quad \text{with some $i$ smaller than $k$}.
$$
\item For each $k \ge 1$, 
a vertex $\sigma$ of level $k$
and a vertex $\sigma'$ of level $k-1$ are connected by a {\em dashed arrow}
as $\sigma \dashrightarrow \sigma'$ 
if and only if $\sigma(k)=k$ and 
$\sigma'=\sigma^{\downarrow}$.  
\end{itemize}
We call $\mathcal{G}^{\U}$ the {\em Weingarten graph} for the unitary group.
\end{definition}

Each vertex $\sigma$ of level $k$ is connected by exactly $k-1$ solid arrows and
radiates at most $1$ dashed arrow 
to $\sigma^{\downarrow}$ if it exists.

Let $\sigma$ be a vertex of level $k$.
A sequence 
$p= (\sigma_0, \sigma_1, \dots, \sigma_{l+k})$ of
vertices is called 
a {\em path} from $\sigma$ to $\emptyset$ 
of length $l+k$ if
$\sigma_0=\sigma$, $\sigma_{l+k}=\emptyset$, and, 
for each $i=1,2,\dots,l+k$, vertices $\sigma_{i-1}$ and $\sigma_{i}$
are connected by an edge.
Since only dashed arrows lower levels of vertices,
a path $p$ always goes through exactly $k$ dashed arrows, 
and hence the length of $p$ is at least $k$. 
Denote by $P(\sigma,l)$ the collection of such paths. Especially, 
every path $p \in P(\sigma,l)$ goes through exactly $l$ solid edges.

\begin{lemma} \label{lemma:unitary-Wg-expansion1}
For $\sigma \in \Sy{k}$, we have the  
expansion
\begin{equation} \label{eq:unitary-Wg-expansion1}
\Wg^{\U}(\sigma,d)= d^{-k}  \sum_{l \ge 0} \# P(\sigma, l) (-d^{-1})^{l}. 
\end{equation}
\end{lemma}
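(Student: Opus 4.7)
I would rewrite the recurrence \eqref{eq:recurrence-unitaryWg} in the form
\begin{equation*}
\Wg^{\U}(\sigma,d) \;=\; -d^{-1} \sum_{i=1}^{k-1} \Wg^{\U}((i,k)\sigma,d) \;+\; d^{-1}\, \delta_{\sigma(k)=k}\, \Wg^{\U}(\sigma^{\downarrow},d),
\end{equation*}
which is exactly a weighted sum over the outgoing edges of $\sigma$ in $\mathcal{G}^{\U}$: each solid edge carries weight $-d^{-1}$ and each dashed edge weight $+d^{-1}$. The strategy is to iterate this relation and interpret the result as a sum over paths to $\emptyset$ in the Weingarten graph.

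\textbf{Iterating the recurrence.} I would substitute the recurrence for each $\Wg^{\U}(\tau,d)$ appearing on the right-hand side, and repeat. After $N$ rounds, one obtains
\begin{equation*}
\Wg^{\U}(\sigma,d) \;=\; \sum_{q} w(q)\, \Wg^{\U}(\mathrm{end}(q),d),
\end{equation*}
summed over walks $q$ in $\mathcal{G}^{\U}$ starting at $\sigma$, each terminated as soon as it hits $\emptyset$ or else upon attaining length exactly $N$; here $w(q) = (-d^{-1})^{s(q)}\, d^{-t(q)}$ with $s(q)$ and $t(q)$ counting the solid and dashed edges of $q$ respectively. Because only dashed edges decrease the level, any walk reaching $\emptyset$ uses exactly $k$ dashed edges, and $\Wg^{\U}(\emptyset,d)=1$; grouping completed walks by their number $l$ of solid edges recovers the truncation
\begin{equation*}
d^{-k} \sum_{l=0}^{N-k} \#P(\sigma,l)\, (-d^{-1})^l
\end{equation*}
of the series in \eqref{eq:unitary-Wg-expansion1}.

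\textbf{Main obstacle.} The crux is showing that the \emph{remainder}, namely the sum over walks attaining length $N$ without reaching $\emptyset$, tends to $0$ as $N \to \infty$. Each vertex of level $j \le k$ has at most $j \le k$ outgoing edges, so at most $k^N$ such partial walks start at $\sigma$. A walk that has not yet reached $\emptyset$ has used at most $k-1$ dashed edges and hence at least $N-k+1$ solid ones, giving $|w(q)| \le d^{-(N-k+1)}$. Since $|\Wg^{\U}(\tau,d)|$ is uniformly bounded over the finite set of vertices $\tau$ of level $\le k$, the remainder is dominated by a constant multiple of $(k/d)^{N}$, which vanishes whenever $d$ exceeds $k$; the borderline case $d=k$ requires a slightly sharper count that exploits the fact that the effective out-degree drops each time the walk descends. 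Passing to the limit $N \to \infty$ then yields \eqref{eq:unitary-Wg-expansion1}.
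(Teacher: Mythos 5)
Your argument is correct and is essentially the paper's own (analytic) proof in different clothing: iterating the recurrence is exactly applying the paper's transfer operator $Q$ repeatedly, your identification of terminated walks with paths in $P(\sigma,l)$ matches the paper's use of $\Wg\circ Q^{\circ\infty}=\Wg$, and your remainder bound (at most $k^N$ walks, each of weight at most $d^{-(N-k+1)}$) is precisely the paper's estimate $\|R^{\circ l}(x)\|\le n^l d^{-l}\|x\|$ for the $\ell_1$-norm on the span of levels $\le k$. The only difference is cosmetic (the paper also sketches a purely formal power-series variant), and you correctly flag the same borderline issue at $d=k$ that the paper's convergence argument leaves for $d>k$.
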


\begin{proof}
The relation \eqref{eq:recurrence-unitaryWg} is 
expressed as 
$$
\Wg^{\U}(\sigma,d)= \sum_{\tau: \sigma \rightarrow 
\tau} \Wg^{\U}(\tau,d) (-d^{-1}) + 
\delta_{\sigma(k)=k} \Wg^{\U}(\sigma^{\downarrow},d) d^{-1},
$$
where the sum of the right hand side runs over
$\tau \in \Sy{k}$ connected with $\sigma$ by 
a solid arrow.

We consider the infinite dimensional vector space spanned by the basis $V$ and we denote it $\C^V$.
We call $\delta_\sigma, \sigma\in  \Sy{k}$. 
On $\C^V$ we introduce the linear map that is the linear extension of 
$$
Q(\delta_\sigma)= 
\begin{cases}
 \sum_{\tau: \sigma \rightarrow 
\tau} (-d^{-1})\delta_\tau + 
\delta_{\sigma(k)=k} d^{-1} \delta_{\sigma^{\downarrow}} 
& \text{if $\sigma \not=\emptyset$} \\  
\delta_{\emptyset} & \text{if $\sigma=\emptyset$.}
\end{cases}
$$

In addition, let us introduce the linear form $\Wg: \C^V\to \C$ given by 
$\Wg (\delta_\sigma ) = \Wg^{\U}(\sigma,d)$.
It follows directly from equation  \eqref{eq:recurrence-unitaryWg}
that 
$$\Wg\circ Q=\Wg .$$ 

Note that $Q (\delta_\emptyset ) =\delta_\emptyset$ but for any other basis element, $Q$ has the effect of multiplying by $d^{-1}$
times a vector whose coordinates do not depend on $d$.
More precisely, if we view $Q$ formally as an endomorphism of $\C [[d^{-1}]]^V$, we can write it as
$$Q=P+d^{-1} T$$ where $P,T$ are endomorphisms of $\C^V$ (that act naturally on $\C [[d^{-1}]]^V$), $P$ is the rank one 
projection whose range is $\C \delta_\emptyset$ and 
whose kernel is the span of all remaining canonical basis elements. 

Therefore, it follows directly that $Q^{\circ l} (\delta_\sigma )$ converges formally as $l\to\infty$ (in the sense that
each coefficient $Q^{\circ l}(\delta_\sigma)_\tau$
(of $Q^{\circ l}(\delta_\sigma)$) viewed
as a rational fraction in $d^{-1}$ converges pointwise -- specifically, the term
of degree $p$ becomes steady as soon as $l>p$ because of the structure of $Q$.
In this sense, we can define the limit of $Q^{\circ l}$ as $l\to\infty$ as an element of 
$End (\C [[d^{-1}]]^V)$, that we will call $Q^{\circ\infty }$.

If one knows beforehand that $\Wg^{\U}(\sigma,d)$ can be seen as a power series in $d^{-1}$ (this is the case because it is 
rational fraction) then the proof is complete by considering the limit $Q^{\circ\infty}$ of $Q^{\circ l}$ as $l\to \infty$,
and the equation
$\Wg\circ Q^{\circ\infty } = \Wg$ at $\delta_\sigma$. 

However, for the sake of completeness and of obtaining more information on $\Wg$, we add one analytic proof that does
not require the knowledge that $\Wg^{\U}(\sigma,d)$ is a power series in $d^{-1}$.

For this, we introduce the subset $V_n$ of $V$ as the finite disjoint union of the $n+1$ first symmetric groups
 $V_n = \bigsqcup_{k=0}^n \Sy{k}$.
 It is clear that $\Wg$ can be defined on $\C^{V_n}\subset \C^V$ and that $Q$ leaves $\C^{V_n}$ invariant, and 
 that on $\C^{V_n}$, $\Wg\circ Q^{\circ\infty } = \Wg$ remains true 
 on $\C^{V_n}$, we introduce the $l_1$-type  norm $\| \sum \alpha_\tau \delta_\tau \|= \sum |\alpha_\tau|$.
 
 We use the notation $P$ for the projection introduced earlier in the first part of this proof, and we note that
 $Q\circ P = P$. Next, we introduce the notation $R= Q\circ (1-P)$. Note that $Q=P+R$.
 By inspecting equation
    \eqref{eq:recurrence-unitaryWg} one sees that
    $$\|R (x)\| \le nd^{-1}\|x\|.$$
 Iterating, for any integer $l\ge 1$, 
 $\|R^{\circ l} (x)\| \le n^ld^{-l}\|x\|.$
 One checks by induction that for any integer $l>1$, 
 $$Q^{\circ l}\circ (1-P)= P\circ (R+R^{\circ 2}+\ldots +R^{\circ l-1})+ R^{\circ k}.$$
Therefore,
$$Q^{\circ l}= P\circ (1+ R+R^{\circ 2}+\ldots +R^{\circ l-1})+ R^{\circ k}.$$
The inequality $\|R^{\circ l} (x)\| \le n^ld^{-l}\|x\|$ implies
$\|P\circ R^{\circ l} (x)\| \le n^ld^{-l}\|x\|,$
therefore $Q^{\circ l}$ converges with respect to any norm topology (as they are all equivalent in finite dimension).

Calling again its limit $Q^{\circ\infty }$, we conclude that 
$\Wg\circ Q^{\circ\infty } = \Wg$, apply this equality at  $\delta_\sigma$, and conclude as in the formal case.
\end{proof}

\begin{remark}
As a byproduct of the analytic proof presented above, we obtain a bound on the Weingarten function
for any $d>n$. This bound is refined and uniformized subsequently in this paper.
\end{remark}

For each permutation $\sigma \in \Sy{k}$, 
we associate with the cycle-type $\mu$, 
which is an integer partition of $k$.
Put $|\sigma|=k-\ell(\mu) \in \{0,1,\dots,k-1\}$,
where $\ell(\mu)$ is the length of $\mu$.
For example, 
$$
|\sigma| =  \begin{cases}
0 & \text{if $\sigma$ is the identity permutation}, \\
1 & \text{if $\sigma$ is a transposition},
\end{cases}
$$
and $|\sigma| \ge 2$ otherwise.

For any transposition $(i,j)$, we find that
$|(i,j) \sigma|$ is equal to $|\sigma|\pm 1$.  
Moreover, if $\sigma(k)=k$, then 
$|\sigma^{\downarrow}|=|\sigma|$.
In other words, 
for a path $p=(\sigma_0, \sigma_1, \dots, \sigma_{l+k})$
in $P(\sigma,l)$, we see that
$$
|\sigma_i|- |\sigma_{i-1}| = 
\begin{cases} 
+1 \ \text{or} \ -1  & \text{if $\sigma_{i-1} \longrightarrow \sigma_i$},\\
0 & \text{if $\sigma_{i-1} \dashrightarrow \sigma_i$}.
\end{cases}
$$
Since $|\sigma_{l+k}|=|\emptyset|=0$, we 
find  
$\# P(\sigma,l) =0$ unless
$l=|\sigma|+2g$ with some integer $g \ge 0$.
We call this property a {\em parity condition} for path $p$
(or for $\sigma$).
The expansion \eqref{eq:unitary-Wg-expansion1} can be now
reformulated as follows.

\begin{theorem} \label{thm:unitary-Wg-expansion2}
For each $\sigma \in \Sy{k}$, we have the formal expansion
$$
(-1)^{|\sigma|} d^{|\sigma|+k}\Wg^{\U}(\sigma,d)=
\sum_{g \ge 0} \# P(\sigma, |\sigma|+2g) d^{-2g}.
$$
\end{theorem}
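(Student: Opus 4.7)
My plan is to derive this theorem as a direct reformulation of Lemma \ref{lemma:unitary-Wg-expansion1}, where the only content to verify is a parity/support statement for the counting function $\#P(\sigma,l)$.

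First I would recall that Lemma \ref{lemma:unitary-Wg-expansion1} already gives
$$
\Wg^{\U}(\sigma,d) = d^{-k}\sum_{l\ge 0} \#P(\sigma,l)(-d^{-1})^l,
$$
so the only remaining task is to show that $\#P(\sigma,l) = 0$ unless $l = |\sigma| + 2g$ for some integer $g\ge 0$. Once this is established, writing $l = |\sigma| + 2g$ gives $(-d^{-1})^l = (-1)^{|\sigma|} d^{-|\sigma|-2g}$, and factoring $(-1)^{|\sigma|} d^{-|\sigma|-k}$ out of the sum yields precisely the claimed identity after multiplying both sides by $(-1)^{|\sigma|} d^{|\sigma|+k}$.

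For the parity statement, I would use the two facts already highlighted in the text before the theorem. First, left-multiplication by a transposition $(i,k)$ either splits a cycle of $\sigma$ or merges two cycles, and thus changes the number of cycles by exactly $\pm 1$; equivalently $|(i,k)\sigma| = |\sigma|\pm 1$. Second, the restriction $\sigma^{\downarrow}$ at a fixed point has the same cycle-type up to removing a single fixed point, so $|\sigma^{\downarrow}| = |\sigma|$. Consequently, along any path $p = (\sigma_0,\sigma_1,\dots,\sigma_{l+k}) \in P(\sigma,l)$, the quantity $|\sigma_i|$ changes by $\pm 1$ across each of the $l$ solid edges and is invariant across each of the $k$ dashed edges. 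Since $|\sigma_0| = |\sigma|$ and $|\sigma_{l+k}| = |\emptyset| = 0$, the sum of the $\pm 1$'s assigned to the solid edges must equal $-|\sigma|$. This forces $l \ge |\sigma|$ and $l \equiv |\sigma| \pmod 2$, i.e., $l = |\sigma|+2g$ for some $g\ge 0$, which is what we needed.

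There is no real obstacle here; the theorem is essentially a bookkeeping statement once one notices that the metric $\sigma \mapsto |\sigma|$ on $\Sy{k}$ is a graded distance with respect to the transposition action (a classical fact about the Cayley graph of the symmetric group on transpositions) and is invariant under the dashed arrows by construction. The only mild care needed is to ensure that the re-indexing is valid as an identity of formal power series in $d^{-1}$, which is immediate from the convergence statement established in the proof of Lemma \ref{lemma:unitary-Wg-expansion1}.
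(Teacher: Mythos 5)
Your proposal is correct and follows exactly the paper's route: the authors likewise deduce the theorem from Lemma \ref{lemma:unitary-Wg-expansion1} by establishing the parity condition $\# P(\sigma,l)=0$ unless $l=|\sigma|+2g$, using the same two observations that $|(i,k)\sigma|=|\sigma|\pm 1$ across solid edges and $|\sigma^{\downarrow}|=|\sigma|$ across dashed edges, and then re-indexing the sum. Nothing is missing.
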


\subsection{Monotone factorizations}

Consider a permutation $\sigma \in \Sy{k}$ and 
a sequence
$f= (\tau_1,\dots, \tau_l)$
of $l$ transpositions satisfying:
\begin{itemize}
\item $\tau_i =(s_i,t_i)$ with $1 \le s_i <t_i \le k$;
\item $k \ge t_1  \ge \cdots \ge t_l \ge 1$;
\item $\sigma=\tau_1 \cdots \tau_l$.
\end{itemize}
Such a sequence $f$ is called a {\em monotone factorization} of length $l$ 
for $\sigma$.
We denote by $\mathcal{F}(\sigma,l)$ the collection of 
these $f$.
Paths in $P(\sigma,l)$  are 
naturally identified with monotone factorizations.

\begin{lemma} \label{lemma:PvsF}
Let $\sigma \in \Sy{k}$. For any nonnegative integer $l$, 
there exists a $1$-to-$1$ correspondence between 
$P(\sigma,l)$ and $\mathcal{F}(\sigma,l)$.
\end{lemma}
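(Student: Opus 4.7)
The plan is to construct an explicit bijection by reading off the labels of the solid arrows along a path in their order of appearance. I would structure the proof around the natural decomposition of a path into plateaus separated by dashed descents.

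First, the structural setup. A path $p = (\sigma_0, \ldots, \sigma_{l+k}) \in P(\sigma, l)$ contains exactly $k$ dashed arrows (only these lower the level), and these partition the path into $k$ plateaus indexed by the levels $k, k-1, \ldots, 1$. On the plateau at level $m$, write the labels of the successive solid arrows as $(s^{(m)}_1, m), \ldots, (s^{(m)}_{j_m}, m)$ with each $s^{(m)}_\nu < m$, and set $A_m := (s^{(m)}_{j_m}, m) \cdots (s^{(m)}_1, m)$, so that the last vertex of that plateau is $A_m \sigma^{(m)}_{\text{start}}$, where $\sigma^{(m)}_{\text{start}}$ is the first vertex of the plateau. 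Legality of the subsequent dashed arrow forces $A_m \sigma^{(m)}_{\text{start}}$ to fix $m$, and its restriction is $\sigma^{(m-1)}_{\text{start}}$.

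Second, the forward map $\Phi : P(\sigma, l) \to \mathcal{F}(\sigma, l)$. Define $\Phi(p)$ to be the concatenation of the solid-arrow labels, plateau by plateau, from level $k$ down to level $2$. The top coordinates are weakly decreasing by construction, each label satisfies $s < t$, and the total length equals the number of solid arrows, namely $l$; so the monotonicity and admissibility conditions for a monotone factorization hold for free. The product identity $\sigma = \tau_1 \cdots \tau_l$ follows by iterating $\sigma^{(m-1)}_{\text{start}} = A_m \sigma^{(m)}_{\text{start}}$ from $\sigma^{(k)}_{\text{start}} = \sigma$ to $\sigma^{(0)}_{\text{end}} = e$, giving $\sigma = A_k^{-1} A_{k-1}^{-1} \cdots A_2^{-1}$; since $A_m^{-1} = (s^{(m)}_1, m)(s^{(m)}_2, m) \cdots (s^{(m)}_{j_m}, m)$, this is exactly the word $\tau_1 \tau_2 \cdots \tau_l$.

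Third, the inverse map $\Psi$. Given $f = (\tau_1, \ldots, \tau_l) \in \mathcal{F}(\sigma, l)$ with $\tau_i = (s_i, t_i)$, group the transpositions into consecutive blocks $B_k, B_{k-1}, \ldots$ according to the common value of $t_i$ (some blocks possibly empty). For each level $m$ in turn, build a chain of solid arrows labelled by $B_m$ and then append one dashed arrow. The only point to verify is that each dashed arrow is legal, i.e.\ that after processing $B_m$ on the left, the running permutation fixes $m$. This follows because $A_m \cdot (A_m^{-1} \cdot \tau_{|B_k|+\cdots+|B_m|+1} \cdots \tau_l) = \tau_{|B_k|+\cdots+|B_m|+1} \cdots \tau_l$, and by monotonicity the tail factorization is supported on indices strictly less than $m$, hence lies in $\Sy{m-1}$.

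The two maps are mutually inverse by inspection, since both objects are parameterised by the same data --- the ordered list of solid-arrow labels grouped by level. The main (modest) obstacle is keeping track of the multiplication order: solid arrows act on the left of the running permutation while the monotone factorization reads left to right, so one must carefully invert each block using $A_m = [(s^{(m)}_1, m) \cdots (s^{(m)}_{j_m}, m)]^{-1}$ when matching the two encodings. Once this bookkeeping is sorted out, the bijection is immediate.
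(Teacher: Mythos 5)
Your proof is correct and takes essentially the same route as the paper's: the forward map reads off the solid-arrow labels in their order of appearance (your plateau bookkeeping, with $A_m^{-1}=(s^{(m)}_1,m)\cdots(s^{(m)}_{j_m},m)$ and $\sigma=A_k^{-1}\cdots A_2^{-1}=\tau_1\cdots\tau_l$, matches the paper's telescoping $e=\tau_l\cdots\tau_1\sigma$), and the inverse map rebuilds the path by processing the transpositions in decreasing order of their top entries, with monotonicity guaranteeing that the running permutation fixes the letters that must be removed by dashed arrows. The only difference is organizational --- you insert a whole block $B_m$ before each dashed descent, while the paper processes one transposition at a time, possibly preceded by several dashed arrows --- so no further comparison is needed.
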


\begin{proof}
First, we construct a correspondence $P(\sigma,l) \to \mathcal{F}(\sigma,l)$.
Pick up a path $p=(\sigma_0,\sigma_1,\dots, \sigma_{k+l})$
in $P(\sigma,l)$.
There exist $l$ solid arrows in $p$:
$$
\sigma_{i_j-1} \rightarrow \sigma_{i_j},
\quad  \text{where $1 \le i_1 < i_2 <\cdots <
i_{l} \le k+l$}.
$$
Each solid arrow $\sigma_{i_j-1} \rightarrow \sigma_{i_j}$ associates with a transposition $\tau_j=(s_j,t_j)$ satisfying
$\sigma_{i_j}= (s_j,t_j) \sigma_{i_j-1}$, 
where $t_j$ is the level of $\sigma_{i_j-1}$.
Then $p$ gives the relation
$e_k= \tau_l \cdots \tau_1 \sigma$, or equivalently
$\sigma=\tau_1 \cdots \tau_l$.
Since levels of $\sigma_i$ are weakly decreasing, $t_j$ are as well.
Thus we have obtained a monotone factorization 
$f=(\tau_1,\dots, \tau_l)$ in $\mathcal{F}(\sigma,l)$.

Next we construct the inverse map $\mathcal{F}(\sigma,l) \to P(\sigma,l)$.
Let $f=(\tau_1,\dots,\tau_l)$ be a monotone factorization for $\sigma$.
Set $\sigma_0:=\sigma$ and consider $\tau_1=(s_1,t_1)$.
\begin{itemize}
\item If $t_1=k$, then 
we put $\sigma_1:= \tau_1 \sigma_0$ and 
deal with a solid arrow $\sigma_0 \rightarrow \sigma_1$.
\item Assume $t_1=k-r$ with $r \ge 1$. Then
the monotonicity for $f$ forces 
$\sigma_0(s)=s$ for $k-r+1 \le s \le k$, and 
we can put $\sigma_{i}=(\sigma_{i-1})^{\downarrow}
\in \Sy{k-i}$
 ($i=1,2,\dots,r$) inductively,
and finally $\sigma_{r+1}= \tau_1 \sigma_r \in \Sy{k-r}$.
We thus have obtained
a ``partial path''
$$
\sigma_0 \dashrightarrow \sigma_1 \dashrightarrow
\cdots \dashrightarrow \sigma_{r} \rightarrow \sigma_{r+1}.
$$
\end{itemize}
If we repeat this operation for the end point $\sigma_{r+1}$ $(r \ge 0)$ and for 
the next
$\tau_j$ $(j=2,\dots,l)$
until it arrives at $\emptyset$, 
we can construct a path $p=(\sigma_0, \dots, \sigma_{k+l})
\in P(\sigma,l)$.

It is clear that the above two correspondences $p \mapsto f$ and $f \mapsto p$
are inverse each other.
\end{proof}

\begin{example}
The following objects are identified by the previous lemma.
\begin{itemize}
\item A path in $P([4,1,5,3,2],4)$:
\begin{align*}
&[4,1,5,3,2] \xrightarrow{(3,5)} [4,1,3,5,2] \xrightarrow{(2,5)}
[4,1,3,2,5] \dasharrow [4,1,3,2] \\
& \xrightarrow{(2,4)} 
[2,1,3,4] \dasharrow [2,1,3] \dasharrow [2,1]  
\xrightarrow{(1,2)} [1,2] \dasharrow [1] \dasharrow \emptyset.
\end{align*}
\item a monotone factorization in $\mathcal{F}([4,1,5,3,2],4)$:
$$
f=((3,5),(2,5),(2,4),(1,2)),
$$
or the factorization 
$[4,1,5,3,2]=(3,5)(2,5)(2,4)(1,2)$ 
\end{itemize}

\end{example}

\section{Uniform bounds for unitary Weingarten functions}
\label{section:uniform_bounds}

\subsection{Main results}

Our main estimate is as follows: 
\begin{theorem}\label{combinatorial-estimate}
Let $k$ be a positive integer. 
For any permutation $\sigma \in \Sy{k}$ and 
nonnegative integer $g$, we have
$$
 (k-1)^g \# P (\sigma , |\sigma|) \leq 
\# P(\sigma , |\sigma |+2g)\leq 
(6 k^{7/2})^g \# P (\sigma , |\sigma|).
$$
\end{theorem}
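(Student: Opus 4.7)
The plan is to prove the two inequalities separately, using the bijection of Lemma \ref{lemma:PvsF} between $P(\sigma,l)$ and the set $\mathcal{F}(\sigma,l)$ of monotone factorizations of $\sigma$ of length $l$, so that everything can be phrased combinatorially.

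For the lower bound, I would build an injection
\[
\Phi:\mathcal{F}(\sigma,|\sigma|)\times\{1,\dots,k-1\}^g\hookrightarrow \mathcal{F}(\sigma,|\sigma|+2g)
\]
by prepending $g$ trivial pairs at the top level. Given a minimal monotone factorization $f_0=(\tau_1,\dots,\tau_{|\sigma|})$ of $\sigma$ and a tuple $\vec{a}=(a_1,\dots,a_g)\in\{1,\dots,k-1\}^g$, set
\[
\Phi(f_0,\vec{a})=\bigl((a_1,k),(a_1,k),\dots,(a_g,k),(a_g,k),\tau_1,\dots,\tau_{|\sigma|}\bigr).
\]
This output is monotone because its $t$-values $k,k,\dots,k,t_1,\dots,t_{|\sigma|}$ are weakly decreasing (every $t_i\le k$), its product equals $\sigma$ because each prepended pair $(a_i,k)(a_i,k)$ is the identity, and $\Phi$ is injective since the first $2g$ entries of the output determine $\vec{a}$ and removing them recovers $f_0$. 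This yields the lower bound directly.

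For the upper bound, I would induct on $g$ and reduce to the one-step multiplicative estimate $\#P(\sigma,l+2)\le 6k^{7/2}\cdot\#P(\sigma,l)$ valid for every $l\ge|\sigma|$ with $l\equiv|\sigma|\pmod{2}$. The natural way to establish this is an injective ``reduction'' map $\Psi:\mathcal{F}(\sigma,l+2)\to\mathcal{F}(\sigma,l)\times S$ with $|S|\le 6k^{7/2}$: given a non-minimal $f\in\mathcal{F}(\sigma,l+2)$, canonically locate a pair of transpositions whose removal preserves the product, delete them to produce $f'\in\mathcal{F}(\sigma,l)$, and record in $S$ the bookkeeping needed to invert the deletion (positions and identities of the removed transpositions).

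The hard part is designing this canonical reduction. The simplest idea---locate adjacent equal transpositions and cancel them---already fails, since a non-minimal monotone factorization can have no adjacent equal entries at all, as witnessed by $((1,3),(2,3),(1,3))$, a monotone factorization of the transposition $(1,2)$ of length $3$. I expect the correct approach is to follow the Cayley-graph walk $\rho_i=\tau_1\cdots\tau_i$, use non-minimality to locate an ``ascent'' step (where $|\rho_i|>|\rho_{i-1}|$), and canonically pair it with a subsequent descent, possibly after performing braid-type commutations of transpositions that respect the monotonicity constraint $t_1\ge t_2\ge\cdots$ before performing the cancellation. The exponent $7/2$ in $6k^{7/2}$, instead of the na\"\i{}ve $2$ from simply selecting a transposition, reflects that $S$ must encode not only the removed transposition but also the position of the reduction and possibly the sequence of commutations needed to bring the pair into a cancellable position; the delicate bookkeeping showing that all of this fits within $6k^{7/2}$ per step is where the main technical work will lie.
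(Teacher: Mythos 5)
Your lower bound is correct and is essentially the paper's own argument: the paper prepends one cancelling pair $(i,k)(i,k)$ at a time (phrased as two extra solid edges at the top level of the path, $\sigma\to(i,k)\sigma\to\sigma$) and iterates the resulting inequality $(k-1)\#P(\sigma,l)\le\#P(\sigma,l+2)$, which is the same injection you write down in one shot.

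The upper bound, however, is where the entire content of the theorem lives, and there you have only a programme with the decisive step explicitly deferred. You correctly observe that a non-minimal monotone factorization need not contain an adjacent cancellable pair, and you propose to manufacture one by locating an ascent of the walk $\rho_i=\tau_1\cdots\tau_i$ and performing braid-type commutations before cancelling; but you give no construction, no verification that such commutations preserve the monotonicity constraint, and no accounting showing the inverse data fits into a set of size $6k^{7/2}$. That is exactly the ``delicate bookkeeping'' you yourself flag as the main technical work, so the proof is not complete. Moreover, the paper's actual route suggests that an injective cancellation map is the wrong target: the paper never deletes transpositions from a factorization. Instead it (i) invokes the closed formula $\#P(\sigma,|\sigma|)=\prod_i\mathrm{Cat}(\mu_i-1)$ for the number of minimal monotone factorizations (Lemma \ref{lemma:Matsumoto-Novak}), (ii) proves the ratio bound $\#P(\tau\sigma,|\tau\sigma|)\le 6k^{3/2}\,\#P(\sigma,|\sigma|)$ by Stirling estimates for Catalan numbers (Lemma \ref{lemma:neiborhood}), and (iii) inducts on $g$ by cutting each path at its \emph{first} ascending solid edge $\rho\to\rho'$: the tail beyond the ascent lies in $P(\rho',|\rho'|+2g-2)$ and is handled by the induction hypothesis, while the head glued to a minimal path from $\rho$ injects into $P(\sigma,|\sigma|)$. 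The exponent $7/2$ thus arises as $3/2$ (the Catalan ratio comparing $\rho'$ to $\rho$) plus $1$ (at most $k$ choices of $\rho'$) plus $1$ (at most $k$ choices of the ascent index $j$), not as bookkeeping for positions and commutations. To finish, you should either carry out steps (i)--(iii), or actually construct and verify your reduction map $\Psi$, which is a substantially harder and currently unproven combinatorial claim.
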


\begin{theorem}\label{wg-estimate}
For any $\sigma\in \Sy{k}$ and  $d > \sqrt{6} k^{7/4}$, 
$$
\frac{1}{1-\frac{k-1}{d^2}} \le \frac{(-1)^{|\sigma|} d^{k+|\sigma|} 
\Wg^{\U} (\sigma,d)}
{\# P(\sigma,|\sigma|)} \le \frac{1}{1- \frac{6 k^{7/2}}{d^2}}.
$$
In addition, the l.h.s inequality is valid for any 
$d \ge k$.
\end{theorem}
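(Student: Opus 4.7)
My plan is to combine the two theorems already proved in the paper — the power series expansion of Theorem \ref{thm:unitary-Wg-expansion2} and the two-sided combinatorial bounds of Theorem \ref{combinatorial-estimate} — and then recognize the resulting majorant and minorant as geometric series. Concretely, I would start from the identity
$$(-1)^{|\sigma|} d^{k+|\sigma|} \Wg^{\U}(\sigma, d) \;=\; \sum_{g\ge 0} \# P(\sigma, |\sigma|+2g)\, d^{-2g},$$
divide through by $\# P(\sigma, |\sigma|)$, and apply term by term the inequalities
$$(k-1)^g \;\le\; \frac{\# P(\sigma, |\sigma|+2g)}{\# P(\sigma, |\sigma|)} \;\le\; (6 k^{7/2})^g.$$
Because every summand is nonnegative, a term-by-term comparison immediately yields inequalities between the full sums. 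Summing the two geometric series gives $1/(1 - (k-1)/d^2)$ and $1/(1 - 6k^{7/2}/d^2)$, which are exactly the two claimed bounds.

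The one point requiring care is to ensure that the formal identity of Theorem \ref{thm:unitary-Wg-expansion2} may be interpreted as a genuine numerical equality in the regimes of interest. For the upper bound, the hypothesis $d > \sqrt{6}\, k^{7/4}$ forces $6 k^{7/2}/d^2 < 1$, and at the same time the combinatorial upper estimate supplies absolute convergence of the series; the identity is thus literal and the inequality follows. For the lower bound under the weaker hypothesis $d \ge k$, the analytic argument already embedded in the proof of Lemma \ref{lemma:unitary-Wg-expansion1} — the $\ell^1$-norm bound $\|R(x)\| \le k d^{-1}\|x\|$ on the invariant finite-dimensional subspace $\C^{V_k}$ — guarantees convergence of the series, and positivity of every summand together with convergence of the geometric minorant $\sum_g ((k-1)/d^2)^g$ makes the term-by-term comparison legitimate.

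I do not anticipate any serious obstacle here: the algebraic core is a one-line geometric series manipulation, all substantive combinatorial content has already been absorbed into Theorem \ref{combinatorial-estimate}, and the analytic justification was essentially carried out in the previous section. In this sense the theorem is an almost immediate corollary of the two preceding results, and the only bookkeeping needed is to match the two convergence regimes to the two respective halves of the inequality.
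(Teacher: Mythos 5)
Your proposal is correct and is essentially identical to the paper's argument: the paper's entire proof of this theorem reads ``It follows Theorems \ref{thm:unitary-Wg-expansion2} and \ref{combinatorial-estimate} immediately,'' i.e.\ exactly the term-by-term comparison and geometric-series summation you describe. Your additional remarks on matching the convergence regimes to the two halves of the inequality (via the analytic part of the proof of Lemma \ref{lemma:unitary-Wg-expansion1}) only make explicit what the paper leaves implicit.
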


\begin{proof}
It follows Theorems \ref{thm:unitary-Wg-expansion2} and 
\ref{combinatorial-estimate} immediately.
\end{proof}

\subsection{Comments}
\begin{enumerate}
\item
This bound implies that 
$$d^{k+|\sigma |}
\Wg^{\U} (\sigma, d)\to
\mathrm{Moeb}(\sigma) (:=
 (-1)^{|\sigma |}\# P(\sigma,|\sigma|))$$ 
in $d \to \infty$
for any given $\sigma$.
This was long known. 
There was also a need for uniform bounds for 
theoretical purposes, and has actually already had many applications in QIT. 
Some weaker bounds have been obtained by 
Montanaro \cite{montanaro} and by Collins et al. \cite{CGGPG}.
See also \cite{BG15}.

These two bounds are actually not comparable (one is better than the other depending on the nature of $\sigma$ -- specifically, 
its distance to the identity). 
The bound that we present in this manuscript is optimal within a polynomial factor, and 
it improves simultaneously on the two previous bounds. 
\item
We believe (because of the full cycle) that the optimal ratio is 
$1-\frac{k^3}{3d^2}$. 
Indeed, in the case of the full cycle $Z_k$ in $\Sy{k}$, we know that
$$\Wg^{\U} (Z_k,d)=\frac{\mathrm{Cat}(k-1)}{(d-k+1)\ldots (d+k-1)}.$$
Expanding the denominator shows that 
$\Wg^{\U} (Z_k,d)\sim  \mathrm{Cat}(k-1) d^{-2k+1}$ as soon as $k^3/d^2\to 0$.
This would be 
reminiscent of universality (cf for example \cite{soshnikov}).
Indeed, in many occurrences of random matrix theory, the largest eigenvalue
of eigenvalues has fluctuations of the order $d^{-2/3}$ and they can be analyzed
through moments methods with moments that grow as the dimension to the power
$3/2$. This is exactly the phenomenon that we are witnessing here. 

As for us, we just obtained our result for $k^{7/2}/d^2\to 0$, however, we believe that 
$k^3/d^2\to 0$ is the right bound, and leave it as an open question.
\end{enumerate}

\subsection{The proof of Theorem \ref{combinatorial-estimate}}\label{subsec:proof}

\subsubsection{The easy estimate}

In order to obtain the left estimate of Theorem \ref{combinatorial-estimate},
it is enough to show the inequality 
$$
(k-1) \#P(\sigma, l) \le \# P(\sigma,l+2)
$$
for any $l \ge |\sigma|$.
Consider a path $p=(\sigma_0,\dots, \sigma_{k+l}) \in P(\sigma,l)$ and
a transposition $\tau$ of the form $\tau=(i,k)$ with $1 \le i \le k-1$.
Then the sequence
$$
\tilde{p}=(\sigma_{-2}, \sigma_{-1}, \sigma_0,\dots, \sigma_{k+l})
\qquad \text{with $\sigma_{-2}=\sigma$ and $\sigma_{-1}= \tau \sigma$}
$$
is a path from $\sigma$ to $\emptyset$, going through $l+2$ solid edges,
i.e., $\tilde{p} \in P(\sigma, l+2)$. 
The map $(\tau, p) \mapsto \tilde{p}$ is clearly injective.
This fact gives the desired inequality.

\subsubsection{Estimates for Catalan numbers}

\begin{lemma}
\label{lemma:Matsumoto-Novak}
For $\sigma \in \Sy{k}$ with cycle-type 
$\mu=(\mu_1,\mu_2,\dots)$,
$$
\# P(\sigma,|\sigma|) = 
\prod_{i=1}^{\ell(\mu)} \mathrm{Cat}(\mu_i-1),
$$
where $\mathrm{Cat}(n)
= \frac{(2n)!}{(n+1)! \, n!}$ is the $n$-th Catalan number.  
\end{lemma}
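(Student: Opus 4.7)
The plan is to prove this by induction on $k$, using the natural recurrence for $\#P(\sigma, l)$ that comes from decomposing a path in $\mathcal{G}^{\U}$ by its first arrow. Since the first edge out of $\sigma$ is either a solid arrow $\sigma \to (i,k)\sigma$ or, when $\sigma(k)=k$, the dashed arrow $\sigma \dashrightarrow \sigma^{\downarrow}$, one immediately gets
$$\#P(\sigma, l) = \sum_{i=1}^{k-1} \#P((i,k)\sigma,\, l-1) + \delta_{\sigma(k)=k}\,\#P(\sigma^{\downarrow}, l).$$
I will specialize this to $l=|\sigma|$ and invoke the parity condition noted before Theorem \ref{thm:unitary-Wg-expansion2}: only indices $i$ with $|(i,k)\sigma|=|\sigma|-1$ can contribute to a minimal path, i.e.\ those $i<k$ that lie in the same cycle of $\sigma$ as $k$.

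With this in hand I will split the inductive step into two cases. If $\sigma(k)=k$, no solid arrow is admissible, so $\#P(\sigma,|\sigma|) = \#P(\sigma^{\downarrow},|\sigma|)$; the cycle types of $\sigma$ and $\sigma^{\downarrow}$ differ only by the removed fixed point, which contributes the trivial factor $\mathrm{Cat}(0)=1$, and the induction hypothesis closes this case. If instead $k$ lies in a cycle $(k,a_1,\dots,a_{m-1})$ of length $m\ge 2$, a direct computation shows that for $i = a_j$ the product $(a_j,k)\sigma$ splits this cycle into two cycles of lengths $j$ and $m-j$, leaving all other cycles of $\sigma$ unchanged. By the induction hypothesis,
$$\#P((a_j,k)\sigma,\,|\sigma|-1) = \mathrm{Cat}(j-1)\,\mathrm{Cat}(m-j-1)\prod_{\text{other cycles}}\mathrm{Cat}(\mu_l-1).$$
Summing over $j=1,\dots,m-1$ and using the classical Catalan convolution $\sum_{j=1}^{m-1}\mathrm{Cat}(j-1)\mathrm{Cat}(m-j-1)=\mathrm{Cat}(m-1)$ produces exactly the claimed product formula.

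The main obstacle, and the only nonroutine step, is the cycle-splitting computation: I need to verify carefully that left-multiplying $\sigma$ by $(a_j,k)$ does precisely split the cycle of $k$ as described, and that the map $i\mapsto j$ is a bijection between the admissible indices and the sizes of the two resulting parts. Once this combinatorial identity is pinned down, the Catalan convolution does all remaining work, and the base case $k=1$ is immediate from $\mathrm{Cat}(0)=1$ and the unique path $[1]\dashrightarrow\emptyset$. A viable alternative is to translate through Lemma \ref{lemma:PvsF} and count minimal monotone factorizations directly, but the recurrence route above keeps the argument self-contained and mirrors the structure of the Weingarten graph introduced in Section \ref{subsection:Weingarten-graphs}.
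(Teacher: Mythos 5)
Your route is genuinely different from the paper's. The paper disposes of this lemma in two lines: it quotes the known count of minimal monotone factorizations, $\#\mathcal{F}(\sigma,|\sigma|)=\prod_i \mathrm{Cat}(\mu_i-1)$ from \cite[Corollary 2.11]{matsumoto-novak}, and transports it through the bijection of Lemma \ref{lemma:PvsF}. You instead prove the formula from scratch by decomposing a minimal path according to its first edge, using the parity observation to kill the ``join'' arrows, the cut-into-$(j,m-j)$ computation for $(a_j,k)\sigma$, and the Catalan convolution $\sum_{j=1}^{m-1}\mathrm{Cat}(j-1)\mathrm{Cat}(m-j-1)=\mathrm{Cat}(m-1)$. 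All of these ingredients check out: the first-edge recurrence $\#P(\sigma,l)=\sum_{i<k}\#P((i,k)\sigma,l-1)+\delta_{\sigma(k)=k}\#P(\sigma^{\downarrow},l)$ is valid, joins are indeed excluded at $l=|\sigma|$ because $\#P(\tau,l)=0$ for $l<|\tau|$, and $(a_j,k)\sigma$ does split the cycle $(k,a_1,\dots,a_{m-1})$ into cycles of lengths $j$ and $m-j$ with $a_j\mapsto j$ a bijection onto $\{1,\dots,m-1\}$. What your approach buys is self-containedness (no appeal to the Jucys--Murphy machinery behind \cite{matsumoto-novak}), which is very much in the spirit of this paper; what it costs is that you are essentially re-deriving a known result.

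One point needs repair: you announce induction on $k$, but in the case $\sigma(k)\ne k$ the recurrence passes from $\sigma$ to $(a_j,k)\sigma$, which still lies in $\Sy{k}$, so the induction hypothesis as you state it does not apply to that term. The fix is immediate: induct on $k+|\sigma|$ (equivalently, on the length of minimal paths). The cut step keeps the level at $k$ but drops $|\sigma|$ by one, and the dashed step drops the level by one while preserving $|\cdot|$, so $k+|\sigma|$ strictly decreases along every first edge that can begin a minimal path. With that change of induction variable the argument is complete.
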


\begin{proof}
It is known  that $\#\mathcal{F}(\sigma,|\sigma|)= \prod_{i=1}^{\ell(\mu)} \mathrm{Cat}(\mu_i-1)$, 
see \cite[Corollary 2.11]{matsumoto-novak}.
We have the result from Lemma \ref{lemma:PvsF} immediately. 
\end{proof}

\begin{lemma}  \label{lemma:neiborhood}
Let $\sigma \in \Sy{k}$, and let $\tau$ 
be a transposition in $\Sy{k}$.
Then 
$$
\# P(\tau \sigma,|\tau\sigma|)
\le 6 k^{3/2} \# P(\sigma,|\sigma|).
$$
\end{lemma}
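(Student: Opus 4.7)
The plan is to reduce the inequality via Lemma \ref{lemma:Matsumoto-Novak} to a purely combinatorial statement about Catalan numbers indexed by cycle lengths, and then analyse the effect of left multiplication by a transposition on the cycle type of $\sigma$. Writing $\tau = (a,b)$, the standard dichotomy applies: either $a,b$ lie in the same cycle of $\sigma$, say of length $n$, in which case $\tau \sigma$ splits it into two cycles of lengths $p,q$ with $p+q=n$ (all other cycles unchanged); or $a,b$ lie in two distinct cycles of $\sigma$, of lengths $p,q$, in which case $\tau \sigma$ merges them into one cycle of length $p+q$. After the obvious cancellation of unaffected factors in the product formula from Lemma \ref{lemma:Matsumoto-Novak}, the ratio $\#P(\tau\sigma,|\tau\sigma|)/\#P(\sigma,|\sigma|)$ equals either $\mathrm{Cat}(p-1)\mathrm{Cat}(q-1)/\mathrm{Cat}(p+q-1)$ (split) or its reciprocal (merge), with $p+q\le k$.

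The split case is disposed of for free: the Catalan convolution identity $\mathrm{Cat}(m) = \sum_{i+j=m-1}\mathrm{Cat}(i)\mathrm{Cat}(j)$ exhibits $\mathrm{Cat}(p-1)\mathrm{Cat}(q-1)$ as one positive summand in an expansion of $\mathrm{Cat}(p+q-1)$, so the ratio is at most $1$ and \emph{a fortiori} at most $6 k^{3/2}$. All the work is therefore concentrated in proving
$$\frac{\mathrm{Cat}(p+q-1)}{\mathrm{Cat}(p-1)\mathrm{Cat}(q-1)} \le 6 k^{3/2}, \qquad p,q\ge 1,\ p+q\le k.$$

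To establish this, I would write $\mathrm{Cat}(n) = \binom{2n}{n}/(n+1)$ and invoke explicit Stirling-type two-sided bounds on the central binomial coefficient, e.g.\ $\binom{2n}{n} \le 4^n/\sqrt{\pi n}$ together with a matching lower bound of the same order, giving $\mathrm{Cat}(n) \asymp 4^n/n^{3/2}$ with effective constants. Plugging this in, the $4$-powers cancel and the merge ratio is controlled by a constant times $((p-1)(q-1))^{3/2}/(p+q-1)^{3/2}$. Since $(p-1)(q-1) \le ((p+q)/2)^2$, this is $O((p+q)^{3/2}) = O(k^{3/2})$; in fact the asymptotic constant works out to roughly $\sqrt{\pi}/2 \approx 0.886$, which gives a lot of slack against the target value $6$.

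The only real obstacle is turning this asymptotic argument into the explicit constant $6$ valid for \emph{all} admissible $p,q,k$. I would handle this by choosing a small threshold $m_0$, verifying the inequality by hand (or by the trivial bound from $\mathrm{Cat}(n)\le 2^{2n}$) for $m=p+q\le m_0$, and using the Stirling estimates uniformly for $m>m_0$. Edge cases such as $p=1$ (where $\mathrm{Cat}(p-1)=1$ and the ratio reduces to $\mathrm{Cat}(m-1)/\mathrm{Cat}(m-2) \to 4$) are far below the bound and only serve to reassure that the interior case $p\approx q\approx m/2$ is indeed the extremal one.
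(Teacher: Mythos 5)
Your proposal is correct and follows essentially the same route as the paper: reduce to Catalan ratios via Lemma \ref{lemma:Matsumoto-Novak} and the cut/join dichotomy, kill the split case with the Catalan convolution identity, and bound the merge ratio $\mathrm{Cat}(p+q-1)/(\mathrm{Cat}(p-1)\mathrm{Cat}(q-1))$ by two-sided Stirling estimates of the form $\mathrm{Cat}(n)\asymp 4^n n^{-3/2}$. The only cosmetic difference is that the paper obtains the constant $6$ directly from Robbins' explicit Stirling bounds (yielding $e^5/(\sqrt{8}\,\pi^2)\approx 5.31$) rather than via your proposed small-case threshold.
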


\begin{proof}
Let $\mu=(\mu_1,\mu_2,\dots)$ be the cycle-type of $\sigma$.
Then it is well known that the cycle-type of $\tau \sigma$
is obtained from $\mu$ by a {\em cut operation} or a  {\em join operation}.
By a cut operation, a part $\mu_r$ (greater than $1$) is decomposed into two parts $(i,j)$
for some positive integers $i,j$ with $i+j=\mu_r$.
By a join operation, two parts $\mu_r, \mu_s$ are combined as
$\mu_r+\mu_s$. 
Therefore,
together with Lemma \ref{lemma:Matsumoto-Novak},
we find that
the ratio 
$\frac{\# P(\tau \sigma,|\tau\sigma|)}
{\# P( \sigma,|\sigma|)}$
is bounded by
$$
\max_{r+s +2 \le k} \left\{
\frac{\mathrm{Cat}(r+s+1)}{
\mathrm{Cat}(r) \mathrm{Cat}(s)}, \ 
\frac{\mathrm{Cat}(r) \mathrm{Cat}(s)}
{\mathrm{Cat}(r+s+1)}
\right\}.
$$
It is clear that $\frac{\mathrm{Cat}(r) \mathrm{Cat}(s)}
{\mathrm{Cat}(r+s+1)} \le 1$
because of the recurrence formula
$\mathrm{Cat}(n+1) = \sum_{i+j=n} \mathrm{Cat}(i) \mathrm{Cat}(j)$.

Let us estimate the ratio $\frac{\mathrm{Cat}(r+s+1)}{
\mathrm{Cat}(r) \mathrm{Cat}(s)}$.
Using the Stirling's formula with precise bounds \cite{robbins}
$$
\sqrt{2 \pi} n^{n+1/2} e^{-n} \le n! \le
e n^{n+1/2} e^{-n},
$$
we have inequalities for Catalan numbers
$$
\mathrm{Cat}(n)= \frac{1}{n+1} \frac{(2n)!}{(n!)^2} 
\le \frac{1}{n} \frac{e (2n)^{2n+1/2} e^{-2n}}
{(\sqrt{2\pi} n^{n+1/2} e^{-n})^2} = 
\frac{e}{\sqrt{2} \pi} \cdot 4^n n^{-3/2} 
$$
and 
$$
\mathrm{Cat}(n) \ge \frac{1}{2n} \frac{(2n)!}{(n!)^2} 
\ge \frac{1}{2n} 
\frac{\sqrt{2\pi} (2n)^{2n+1/2} e^{-2n}}
{(e n^{n+1/2} e^{-n})^2} =
\frac{\sqrt{\pi}}{e^2} \cdot 4^n n^{-3/2}.
$$
Therefore we see that
$$
\frac{\mathrm{Cat}(r+s+1)}{
\mathrm{Cat}(r) \mathrm{Cat}(s)}
\le 
 \frac{ \frac{e}{\sqrt{2} \pi}4^{r+s+1} (r+s+1)^{-3/2}}{
 \frac{\sqrt{\pi}}{e^2} 4^{r} r^{-3/2} \cdot 
 \frac{\sqrt{\pi}}{e^2} 4^s s^{-3/2}} 
= \frac{ \sqrt{8} e^5}{\pi^2} \left( \frac{rs}{r+s+1}\right)^{3/2}.
$$
Under the condition $r+s \le k$,
this is clearly bounded by
$$
\frac{ \sqrt{8} e^5}{\pi^2} \left[\left( \frac{rs}{r+s}\right)^{3/2}
\right]_{r=s=\frac{k}{2}} =\frac{e^5}{\sqrt{8} \pi^2} k^{3/2}.
$$
Here a numerical estimate gives $\frac{e^5}{\sqrt{8} \pi^2} =5.31...$.
\end{proof}

\subsubsection{Deep observations for $P(\sigma, l)$}

Let us recall 
the Weingarten graph
$\mathcal{G}^{\U}$ defined in \S \ref{subsection:Weingarten-graphs}.
Consider a vertex $\sigma \in \Sy{k}$ and 
a path 
$p=(\sigma_0, \dots, \sigma_{k+l}) \in P(\sigma,l)$.
The path $p$ goes through exactly $l$ solid edges and
$k$ dashed edges.
Furthermore, we see that
\begin{itemize}
\item If $\sigma_i \rightarrow \sigma_{i+1}$, then
$\sigma_i, \sigma_{i+1}$ have the same level $t$ with some $t \in \{2,3,\dots,k\}$
and satisfy the relation $\sigma_{i+1}= (s,t) \sigma_i$ with some $s \in \{1,2,\dots,t-1\}$. Moreover, we have $|\sigma_{i+1}|=|\sigma_i| \pm 1$.
\item If $\sigma_i \dashrightarrow \sigma_{i+1}$, then $\sigma_{i+1}=(\sigma_i)^{\downarrow}$ and 
the level of $\sigma_{i+1}$ is smaller by $1$ than that of $\sigma_i$.
Moreover, $|\sigma_i|=|\sigma_{i+1}|$.
\end{itemize}
If $l=|\sigma|$, then  $|\sigma_i| > |\sigma_{i+1}|$ whenever $\sigma_{i} \rightarrow \sigma_{i+1}$. 

From now on, we assume $l>|\sigma|$.
Then there exist  solid edges  $\sigma_{i} \rightarrow \sigma_{i+1}$ 
satisfying $|\sigma_i|<|\sigma_{i+1}|$.
We write $j(p):=j$ if the $(j+1)$-th solid edge in $p$ is the first one among
them.
Since $|\sigma| < k$ for all $\sigma \in \Sy{k}$, the number $j(p)$ should be in
$\{0, 1,2,\dots,k-2\}$.
Let $\sigma_{j+k-r} \rightarrow \sigma_{j+k-r+1}$ be the present
$(j+1)$-th solid edge in $p$.
Then, the part $(\sigma_0,\dots,\sigma_{j+k-r})$ of $p$ 
goes through $j$ solid edge and $k-r$ dashed edges,
and therefore $\sigma_{j+k-r}$ and $\sigma_{j+k-r+1}$ are of level $r$.
Furthermore, the definition of $j=j(p)$ implies that
$|\sigma_{j+k-r}|=|\sigma|-j$ and $|\sigma_{j+k-r+1}|=|\sigma_{j+k-r}|+1$.

We now put
$$
P_j(\sigma,l) = \{p \in P(\sigma, l) \ | \ 
j(p)=j\}
$$
for $j \in \{0, 1,2,\dots,k-2\}$.
We have the decomposition
$$
P(\sigma,l)= \bigcup_{j=0}^{k-2} P_j(\sigma,l).
$$
Let $2 \le r \le k$,  and let $\rho, \rho' \in \Sy{r}$
be two different permutations in $\Sy{r}$ connected by a solid edge.
We furthermore put
$$
P_j(\sigma,\rho, \rho', l) = \{
p=(\sigma_0,\dots,\sigma_{k+l}) \in P_j(\sigma,l) \ | \ 
\sigma_{j+k-r}=\rho, \ \sigma_{j+k-r+1}=\rho'\}.
$$
As we saw in the previous paragraph, 
this set is nonempty only if 
\begin{equation} \label{eq:condition_j_rho}
j= |\sigma|-|\rho| \qquad \text{and} \qquad
|\rho'|=|\rho|+1.
\end{equation}
We have thus  obtained the decomposition
$$
P_j(\sigma,l)= \bigcup_{r=2}^k \bigcup_{\rho \in S_r}
\bigcup_{\begin{subarray}{c} \rho' \in S_r \\ \rho \rightarrow \rho'
\end{subarray}} P_j(\sigma,\rho,\rho',l).
$$

Let us consider each  set 
$P_j(\sigma,\rho,\rho',l)$ with $l=|\sigma|+2g$
and suppose that it is nonempty.
Decompose each path 
$$
p=(\sigma_0,\dots,\sigma_{j+k-r}, \sigma_{
j+k-r+1}, \dots, \sigma_{k+l}) \in P_j(\sigma,\rho,\rho',l)
$$ 
into two parts 
$q=(\sigma_0,\dots,\sigma_{j+k-r})$ and 
$q'=(\sigma_{
j+k-r+1}, \dots, \sigma_{k+l})$.
Then $q$ is a sequence (or a {\em partial path}) from
$\sigma=\sigma_0$ to $\sigma_{j+k-r}=\rho$,
going through
$j$ solid edges and $k-r$ dashed edges.
Also,  $q'$ is a path from $\rho'$ to $\emptyset$
going through $l-j-1$ solid edges and $r$ dashed edges.
Since $l-j-1=(|\sigma|+2g)-(|\sigma|-|\rho|)-1=
|\rho| +2g-1=|\rho'|+2g-2$
by \eqref{eq:condition_j_rho}, the path $q'$ belongs to 
$P(\rho', |\rho'|+2g-2)$.
We thus obtain the bijection
\begin{equation} \label{eq:decomp-Pj-rho}
P_j(\sigma,\rho,\rho',|\sigma|+2g) \cong
\tilde{P}_j(\sigma, \rho) \times 
P(\rho', |\rho'|+2g-2),
\end{equation}
where
$\tilde{P}_j(\sigma, \rho)$ is, by definition, the collection of
all {\em partial paths} 
$q=(\sigma_0,\dots, \sigma_{j+k-r})$ 
from $\sigma=\sigma_0$ to $\sigma_{j+k-r}=\rho$, 
going through
$j(=|\sigma|-|\rho|)$ solid edges and $k-r$ dashed edges.

\subsubsection{Proof of the right estimate in Theorem \ref{combinatorial-estimate}} 

We shall prove inequalities
$$
\# P(\sigma,|\sigma|+2g) 
\le (6 k^{7/2})^{g} \# P(\sigma,|\sigma|),
\qquad (\sigma \in \Sy{k})
$$
by induction on $g$.
Note that the case where $g=0$ is trivial.
Assume that $g>0$.
The induction hypothesis claims that,
for all $r \ge 1$ and 
for all $\eta \in \Sy{r}$, it holds that
\begin{equation} \label{eq:induction_assumption}
\# P(\eta,|\eta|+2g-2) 
\le (6 r^{7/2})^{g-1} \# P(\eta,|\eta|).
\end{equation}

Let $\sigma \in \Sy{k}$ and 
consider $P(\sigma, |\sigma|+2g)$.
The cardinality of each 
nonempty subset $P_j(\sigma,\rho, \rho',|\sigma|+2g)$
is estimated as follows:
\begin{align*}
\# P_j(\sigma,\rho, \rho',|\sigma|+2g)
&= \# \tilde{P}_j(\sigma,\rho) \cdot \#
P(\rho', |\rho'|+2g-2) \\
&\le   \# \tilde{P}_j(\sigma,\rho) \cdot (6k^{7/2})^{g-1} \#
P(\rho',|\rho'|) \\
&\le   \# \tilde{P}_j(\sigma,\rho) \cdot (6k^{7/2})^{g-1} 
6k^{3/2} \cdot \#P(\rho,|\rho|).
\end{align*}
Here we have used \eqref{eq:decomp-Pj-rho}, 
\eqref{eq:induction_assumption}, and 
Lemma \ref{lemma:neiborhood}
in each step.
Together with the fact that, given $\rho \in \Sy{r}$, there are 
$r-1$ possibilities for $\rho'$, we obtain 
$$
\#\bigcup_{\rho'}  P_j(\sigma,\rho, \rho',|\sigma|+2g) \le 
\# \tilde{P}_j(\sigma,\rho) \cdot (6k^{7/2})^{g-1} 
6k^{3/2} \cdot k \cdot \#P(\rho,|\rho|).
$$
Here, since the natural map
\begin{align*}
& \bigcup_{r} \bigcup_{\rho \in \Sy{r}}( \tilde{P}_j(\sigma,\rho) \times P(\rho,|\rho|))  \to
 P(\sigma,|\sigma|): \\
& ( (\sigma,\sigma_1,\dots, \sigma_{j+k-r-1},\rho) ,
(\rho, \sigma_1',\dots, \sigma'_{|\rho|+r-1},\emptyset)) \\
& \qquad \mapsto 
(\underbrace{\sigma,
\sigma_1,\dots, \sigma_{j+k-r-1},
\rho,
\sigma_1',\dots, \sigma'_{|\rho|+r-1},\emptyset}_{\text{this has 
$|\sigma|$ solid edges and $k$ dashed edges}}),
\end{align*}
is well-defined and injective, 
summing over $\rho$'s,
we have
$$
\# P_j(\sigma,|\sigma|+2g) \le 
(6k^{7/2})^{g-1} 
6k^{5/2} \cdot \# P(\sigma,|\sigma|).
$$
Summing over $j$, one gets
$$
\# P(\sigma,|\sigma|+2g) \le 
(6k^{7/2})^{g-1} \cdot  6k^{5/2} \cdot k \cdot
\# P(\sigma,|\sigma|)
=(6k^{7/2})^{g}
\# P(\sigma,|\sigma|),
$$
as desired.
We have thus completed the proof of 
Theorem \ref{combinatorial-estimate}.

\section{Orthogonal groups}

In this section, we develop the case for orthogonal groups 
$$
\Ort (d) =\{ g  \in \GL (d,\mathbb{C}) \ | \  g g^{\mathrm{t}} = I_d \}.
$$
Most parts of the present section is same with the unitary case.

\subsection{Weingarten calculus} \label{sec:Wg-orthogonal}

Suppose that $d,k$ are positive integers with $d \ge 2k$. 
Let $\Pair{2k}$ be the set of pair partitions on $\{1,2,\dots, 2k\}$.
A pair partition $\mf{m} \in \Pair{2k}$ is expressed in the form
$$
\mf{m}= \{\mf{m}(1), \mf{m}(2) \} \{\mf{m}(3), \mf{m}(4)\} \cdots
\{\mf{m}(2k-1), \mf{m}(2k) \}.
$$
An ordered sequence $\mathbf{i}=(i_1,\dots,i_{2k})$ of $2k$ positive integers is called 
{\em admissible} for $\mf{m}$ if it holds that
$$
\{r,s \} \in \mf{m}  \quad \Rightarrow  \quad i_r =i_s.
$$
Furthermore, $\mathbf{i}$ is called {\em strongly admissible} for $\mf{m}$
if it holds that 
$$
\{r,s \} \in \mf{m}  \quad \Leftrightarrow  \quad i_r =i_s.
$$
For example, if $\mf{m}=\{1,3\}\{2,6\}\{4,5\}$ then
$(2,1,2,2,2,1)$ is admissible for $\mf{m}$ but not strongly admissible,
and $(2,1,2,3,3,1)$ is strongly admissible.

The symmetric group $\Sy{2k}$ acts transitively on $\mcal{P}_2 (2k)$ by
$$
\sigma . \mf{m}=\{ \sigma( \mf{m}(1)) , \sigma(\mf{m}(2)) \} 
\{\sigma(\mf{m}(3)), \sigma(\mf{m}(4)) \} \cdots
\{\sigma(\mf{m}(2k-1)), \sigma(\mf{m}(2k)) \}.
$$
In particular, we see that
$$
\sigma. \mf{e}= \{\sigma(1), \sigma(2)\}
\cdots \{\sigma(2k-1), \sigma(2k)\}
$$
for the ``trivial pair partition''
$$
\mf{e}=\mf{e}_k=\{1,2\}\{3,4\}\cdots\{2k-1,2k\}.
$$

For two pair partitions $\mf{m}, \mf{n}$ in $\Pair{2k}$,
we let $\mathbf{i}=(i_1,\dots,i_{2k})$ and 
$\mathbf{j}=(j_1,\dots,j_{2k})$ to be strongly admissible  for
$\mf{m}$ and $\mf{n}$, respectively. 
Then the {\em orthogonal Weingarten function} $\Wg^{\Ort}(\mf{m},\mf{n},d)$ is
defined by
\begin{align*}
\Wg^{\Ort}(\mf{m},\mf{n},d) =& \int_{\Ort (d)} 
u_{i_1 j_1} u_{i_2 j_2} \cdots u_{i_{2k} j_{2k}}
\  d\mu.
  \end{align*}
Here $d \mu=d\mu^{\Ort (d)}$ denotes the normalized Haar measure on $\Ort (d)$.
For example, if $\mf{m}=\{1,3\}\{2,6\}\{4,5\}$ 
and $\mf{n}=\{1,2\}\{3,4\}\{5,6\}$, then we can write
$$
\Wg^{\Ort}(\mf{m},\mf{n},d)
= \int_{\Ort (d)} u_{21}  u_{11} u_{22} u_{3 2} u_{3 3} u_{13} \, d\mu.
$$
By virtue of the bi-invariant property of the Haar measure,
this definition is independent of choices of strongly admissible sequences.

The Weingarten calculus for
orthogonal groups is stated as follows.

\begin{lemma}[\cite{collins-sniady}] \label{lemma:Orthogonal_Wg}
For two sequences 
$$
\mathbf{i}=(i_1,\dots,i_{2k}), \qquad \mathbf{j}=(j_1,\dots,j_{2k})
$$
of positive integers in $\{1,2,\dots,d\}$, we have
\begin{align*}
&\int_{\Ort (d)} u_{i_1 j_1} u_{i_2 j_2} \cdots u_{i_{2k} j_{2k}} \, d\mu  \\
&= \sum_{\mf{m} \in \Pair{2k}} \sum_{\mf{n} \in \Pair{2k}}
\Delta_{\mf{m}}(\mathbf{i}) \Delta_{\mf{n}}(\mathbf{j})
\Wg^{\Ort}(\mf{m},\mf{n},d).
\end{align*}
Here $\Delta_{\mf{m}}(\mathbf{i})$ is defined by
$$
\Delta_{\mf{m}}(\mathbf{i}) =
\begin{cases}
1 & \text{if $\mathbf{i}$ is admissible for $\mf{m}$}, \\
0 & \text{otherwise}.
\end{cases}
$$
\end{lemma}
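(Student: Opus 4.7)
The plan is to reduce the claim to the First Fundamental Theorem of invariant theory for $\Ort(d)$ (Brauer's theorem), exactly as one does in the unitary case with Schur--Weyl duality.

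First I would define the tensor
$$T_{\mathbf{i},\mathbf{j}} \;=\; \int_{\Ort(d)} u_{i_1 j_1}\cdots u_{i_{2k} j_{2k}}\, d\mu$$
and view $T$ as an element of $(\C^d)^{\otimes 2k}\otimes (\C^d)^{\otimes 2k}$. By the left translation invariance of the Haar measure (apply $U\mapsto gU$ with $g\in\Ort(d)$), the ``row'' tensor indexed by $\mathbf{i}$ is invariant under the diagonal action of $\Ort(d)$ on $(\C^d)^{\otimes 2k}$; the analogous computation with $U\mapsto Ug$ shows that the ``column'' tensor indexed by $\mathbf{j}$ is also $\Ort(d)$-invariant. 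Hence $T$ lies in the $(\Ort(d)\times\Ort(d))$-invariant subspace of $(\C^d)^{\otimes 2k}\otimes(\C^d)^{\otimes 2k}$.

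Next I would invoke the classical fact that the space of $\Ort(d)$-invariants in $(\C^d)^{\otimes 2k}$ is spanned by the pair-partition tensors $v_{\mf{m}}=\sum_{\mathbf{i}} \Delta_{\mf{m}}(\mathbf{i})\, e_{i_1}\otimes\cdots\otimes e_{i_{2k}}$ as $\mf{m}$ ranges over $\Pair{2k}$, and that under the standing assumption $d\ge 2k$ these vectors are linearly independent and thus form a basis. Consequently there exist unique scalars $c(\mf{m},\mf{n})$ with
$$T_{\mathbf{i},\mathbf{j}} \;=\; \sum_{\mf{m},\mf{n}\in\Pair{2k}} c(\mf{m},\mf{n})\,\Delta_{\mf{m}}(\mathbf{i})\,\Delta_{\mf{n}}(\mathbf{j}).$$

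It then remains to identify $c(\mf{m},\mf{n})$ with $\Wg^{\Ort}(\mf{m},\mf{n},d)$. I would do this by evaluating the identity on a sequence $\mathbf{i}$ that is strongly admissible for some fixed $\mf{m}_0$ and a sequence $\mathbf{j}$ strongly admissible for some $\mf{n}_0$. The key elementary observation is that if $\mathbf{i}$ is strongly admissible for $\mf{m}_0$, then $\Delta_{\mf{m}}(\mathbf{i})=1$ forces every pair of $\mf{m}$ to be a pair in $\mf{m}_0$; since both $\mf{m}$ and $\mf{m}_0$ consist of $k$ disjoint pairs on $\{1,\dots,2k\}$, this forces $\mf{m}=\mf{m}_0$. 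The same holds for $\mathbf{j}$ and $\mf{n}_0$. Hence only the term $(\mf{m}_0,\mf{n}_0)$ survives and
$$c(\mf{m}_0,\mf{n}_0) \;=\; T_{\mathbf{i},\mathbf{j}} \;=\; \Wg^{\Ort}(\mf{m}_0,\mf{n}_0,d),$$
where the last equality is the very definition of the orthogonal Weingarten function (and we recall that this definition does not depend on the choice of strongly admissible representatives, by bi-invariance of $\mu$).

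The main obstacle is really the invocation of Brauer's First Fundamental Theorem: one must justify that every $\Ort(d)$-invariant tensor in $(\C^d)^{\otimes 2k}$ is a linear combination of the $v_{\mf{m}}$'s, and that for $d\ge 2k$ there are no linear relations among them (otherwise the coefficients $c(\mf{m},\mf{n})$ would not be uniquely defined, and the existence of a strongly admissible $\mathbf{i}$ would itself be in jeopardy). The existence of a strongly admissible sequence when $d\ge 2k$ is immediate since we may use $2k$ distinct integers in $\{1,\dots,d\}$, and from there the identification of coefficients is forced. All remaining verifications (bi-invariance yields well-definedness of $\Wg^{\Ort}$, admissibility bookkeeping in the sum) are straightforward combinatorial checks.
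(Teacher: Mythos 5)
The paper does not prove this lemma: it is imported verbatim from Collins--\'Sniady \cite{collins-sniady}, so there is no internal proof to compare against. Your argument is the standard one and it is correct. Haar bi-invariance places the moment tensor $T$ in $\bigl((\C^d)^{\otimes 2k}\bigr)^{\Ort(d)}\otimes\bigl((\C^d)^{\otimes 2k}\bigr)^{\Ort(d)}$; Brauer's First Fundamental Theorem says each factor is spanned by the pair-partition tensors $v_{\mf{m}}$; and your evaluation at strongly admissible sequences correctly identifies the coefficients with $\Wg^{\Ort}(\mf{m},\mf{n},d)$ as defined in the paper, since strong admissibility for $\mf{m}_0$ forces $\Delta_{\mf{m}}(\mathbf{i})=1$ only for $\mf{m}=\mf{m}_0$. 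Two small refinements: a strongly admissible sequence needs only $k$ distinct values (one per block), so $d\ge k$ already suffices for that step; and the linear independence of the $v_{\mf{m}}$ need not be invoked as a separate classical fact -- it follows at once from the same evaluation argument (a relation $\sum_{\mf{m}}a_{\mf{m}}\Delta_{\mf{m}}(\mathbf{i})=0$ tested at a sequence strongly admissible for $\mf{m}_0$ yields $a_{\mf{m}_0}=0$). The one genuinely deep external input is the FFT itself, which you correctly flag; that is unavoidable in any proof of this lemma. Your route differs mildly from Collins--\'Sniady, who define the Weingarten function as a pseudo-inverse of the Gram matrix $(\mf{m},\mf{n})\mapsto\langle v_{\mf{m}},v_{\mf{n}}\rangle$ and must then reconcile it with the integral formula; your direct identification via strongly admissible sequences sidesteps the Gram-matrix inversion entirely and meshes better with the definition of $\Wg^{\Ort}$ actually used in this paper.
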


We will use the following lemma later.

\begin{lemma} \label{lemma:WgO_invariant}
For any  $\sigma \in \Sy{2k}$ and $\mf{m},\mf{n} \in \Pair{2k}$, we have
$$
\Wg^{\Ort}(\sigma.\mf{m},\sigma.\mf{n},d) =\Wg^{\Ort}(\mf{m},\mf{n},d)
$$
\end{lemma}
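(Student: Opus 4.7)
The plan is to work directly from the integral definition of $\Wg^{\Ort}$ and to exploit the freedom one has in choosing any strongly admissible sequences to represent a given pair partition. The key observation is that the action of $\sigma \in \Sy{2k}$ on $\Pair{2k}$ corresponds exactly to permuting the entries of a strongly admissible sequence, while the integrand in the definition of $\Wg^{\Ort}$ is a product of commuting scalar-valued functions, so reordering its factors leaves the integral unchanged.

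First I would fix strongly admissible sequences $\mathbf{i}=(i_1,\dots,i_{2k})$ for $\mf{m}$ and $\mathbf{j}=(j_1,\dots,j_{2k})$ for $\mf{n}$, so that
$$\Wg^{\Ort}(\mf{m},\mf{n},d) = \int_{\Ort(d)} u_{i_1 j_1} u_{i_2 j_2} \cdots u_{i_{2k} j_{2k}} \, d\mu.$$
Next I would define $\mathbf{i}'$ and $\mathbf{j}'$ by $i'_t := i_{\sigma^{-1}(t)}$ and $j'_t := j_{\sigma^{-1}(t)}$ and verify that $\mathbf{i}'$ is strongly admissible for $\sigma.\mf{m}$. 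By the definition of the action, $\{r,s\} \in \sigma.\mf{m}$ iff $\{\sigma^{-1}(r),\sigma^{-1}(s)\} \in \mf{m}$, and strong admissibility of $\mathbf{i}$ rephrases this as $i_{\sigma^{-1}(r)} = i_{\sigma^{-1}(s)}$, i.e.\ $i'_r = i'_s$. The analogous fact for $\mathbf{j}'$ relative to $\sigma.\mf{n}$ follows identically.

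Using these sequences to evaluate the left-hand side yields
$$\Wg^{\Ort}(\sigma.\mf{m},\sigma.\mf{n},d) = \int_{\Ort(d)} \prod_{t=1}^{2k} u_{\,i_{\sigma^{-1}(t)}\, j_{\sigma^{-1}(t)}} \, d\mu.$$
Since the factors $u_{ab}$ are complex-valued functions on $\Ort(d)$, they commute, so reindexing the product by $r = \sigma^{-1}(t)$ merely permutes the factors of the integrand and leaves the integral unchanged, giving $\int_{\Ort(d)} \prod_{r=1}^{2k} u_{i_r j_r} \, d\mu = \Wg^{\Ort}(\mf{m},\mf{n},d)$, which is the desired identity.

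The only step that calls for a moment's care is the verification of strong admissibility of the transported sequences; once that is in hand, the identity is pure bookkeeping, and there is no genuine obstacle. This is expected: the lemma really expresses that $\Wg^{\Ort}$ depends only on the pair-partition structure of the index data, so the diagonal action of $\Sy{2k}$ on $\Pair{2k} \times \Pair{2k}$ is automatically a symmetry of the orthogonal Weingarten function.
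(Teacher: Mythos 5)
Your proof is correct and follows essentially the same route as the paper's: both transport a strongly admissible sequence along the permutation, check it remains strongly admissible for the transformed pair partition, and then observe that the commuting factors of the integrand are merely reordered. Your version is in fact slightly more careful than the paper's about the direction of the reindexing (using $i'_t = i_{\sigma^{-1}(t)}$ rather than $i_{\sigma(t)}$), which is the correct choice for the action as defined.
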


\begin{proof}
It is easy to see that:
if $\mathbf{i}$ is a strongly admissible sequence for $\mf{m}$, then
$\mathbf{i}^{\sigma}=(i_{\sigma(1)},\dots,i_{\sigma(2k)})$
is strongly admissible for $\sigma.\mf{m}$.
Therefore
\begin{align*}
& \Wg^{\Ort}(\sigma.\mf{m},\sigma.\mf{n},d)
=  \int_{\Ort (d)} u_{i_{\sigma(1)},j_{\sigma(1)}} \cdots 
u_{i_{\sigma(2k)},j_{\sigma(2k)}} \, d\mu \\
=& \int_{\Ort (d)} u_{i_{1},j_{1}} \cdots 
u_{i_{2k},j_{2k}} \, d\mu =\Wg^{\Ort}(\mf{m},\mf{n},d).
\end{align*}
\end{proof}

We write 
$\Wg^{\Ort}(\mf{e},\mf{m},d)$ by $\Wg^{\Ort}(\mf{m},d)$,
which is also called  the orthogonal Weingarten function.
For each $\mf{m}$, there exist
some $\sigma_{\mf{m}} \in \Sy{2k}$
with $\sigma_{\mf{m}}. \mf{e}=\mf{m}$,
and  
$$\Wg^{\Ort}(\mf{m},\mf{n},d)=
\Wg^{\Ort} (\sigma_{\mf{m}}. \mf{e}, \mf{n},d)
= \Wg^{\Ort} (\mf{e}, \sigma_{\mf{m}}^{-1}.\mf{n},d) = 
\Wg^{\Ort} (\sigma_{\mf{m}}^{-1}.\mf{n},d)$$
by Lemma \ref{lemma:WgO_invariant}.
Thus, it is enough to deal with
the family
$\{\Wg^{\Ort}(\mf{m},d)\}_{\mf{m} \in 
\Pair{2k}}$ on behalf of
$\{\Wg^{\Ort}(\mf{m},\mf{n},d)\}_{\mf{m},\mf{n} \in 
\Pair{2k}}$.

\subsection{Orthogonality relations}

\begin{lemma} \label{lemma:Ort_rel_Ort}
The orthogonal Weingarten function satisfies the following 
formula:
For each $\mf{m} \in \Pair{2k}$,
\begin{equation} \label{eq:orth_ind}
d \Wg^{\Ort}(\mf{m},d) =
- \sum_{i=1}^{2k-2} \Wg^{\Ort} 
( (i, 2k-1). \mf{m},d) 
+\delta_{\{2k-1,2k\} \in \mf{m}} \Wg^{\Ort} (\mf{m}^{\downarrow},d).
\end{equation}
Here 
\begin{itemize}
\item $\delta_{\{2k-1, 2k\} \in \mf{m}} = \begin{cases}
1 & \text{if $\{2k-1,2k\}\in \mf{m}$}, \\
0 & \text{otherwise}.
\end{cases}$
\item $\mf{m}^{\downarrow}$ is 
the pair partition 
in $\Pair{2k-2}$ obtained from $\mf{m}$
by removing the block $\{2k-1,2k\}$ (if possible).
\end{itemize}
\end{lemma}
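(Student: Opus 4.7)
The plan is to mimic the unitary argument of \eqref{eq:recurrence-unitaryWg} by computing a suitably chosen sum in two ways. Fix a strongly admissible sequence $\mathbf{j}=(j_1,\ldots,j_{2k})$ for $\mf{m}$, set $\mathbf{i}(i):=(1,1,2,2,\ldots,k-1,k-1,i,i)$, and consider
$$
S := \sum_{i=1}^d \int_{\Ort(d)} u_{1,j_1} u_{1,j_2}\, u_{2,j_3} u_{2,j_4}\cdots u_{k-1,j_{2k-2}}\, u_{i,j_{2k-1}} u_{i,j_{2k}}\, d\mu.
$$

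First, I would evaluate $S$ directly via the orthogonality relation $\sum_{i=1}^d u_{i,a}u_{i,b}=\delta_{a,b}$ (coming from $U^{\mathrm{t}}U = I_d$). This collapses $S$ to $\delta_{j_{2k-1},j_{2k}}$ times the truncated Haar integral of the first $2k-2$ factors. Because $\mathbf{j}$ is strongly admissible for $\mf{m}$, the Kronecker symbol $\delta_{j_{2k-1},j_{2k}}$ equals $\delta_{\{2k-1,2k\}\in\mf{m}}$; when nonzero, the truncated sequence $(j_1,\ldots,j_{2k-2})$ is strongly admissible for $\mf{m}^{\downarrow}$, so the remaining integral is exactly $\Wg^{\Ort}(\mf{m}^{\downarrow},d)$. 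This delivers the second term on the right-hand side of \eqref{eq:orth_ind}.

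Second, I would evaluate $S$ term-by-term by invoking Lemma \ref{lemma:Orthogonal_Wg} on each integrand. Strong admissibility of $\mathbf{j}$ for $\mf{m}$ forces $\Delta_{\mf{n}}(\mathbf{j})=\delta_{\mf{n},\mf{m}}$ in that expansion, reducing each integral to $\sum_{\mf{p}}\Delta_{\mf{p}}(\mathbf{i}(i))\Wg^{\Ort}(\mf{p},\mf{m},d)$. The combinatorial heart is to enumerate the $\mf{p}$'s admissible for $\mathbf{i}(i)$: when $i\notin\{1,\ldots,k-1\}$ the value classes of $\mathbf{i}(i)$ coincide with the blocks of $\mf{e}$, forcing $\mf{p}=\mf{e}$; when $i=r\in\{1,\ldots,k-1\}$, the quadruple $\{2r-1,2r,2k-1,2k\}$ becomes a single value class admitting exactly three perfect matchings, namely $\mf{e}$ itself together with $(2r-1,2k-1).\mf{e}$ and $(2r,2k-1).\mf{e}$. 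Summing $i$ from $1$ to $d$ then yields
$$
S = d\,\Wg^{\Ort}(\mf{m},d) + \sum_{i=1}^{2k-2}\Wg^{\Ort}((i,2k-1).\mf{e},\mf{m},d).
$$

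Finally, I would apply Lemma \ref{lemma:WgO_invariant} with the self-inverse transposition $\sigma=(i,2k-1)$, giving $\Wg^{\Ort}((i,2k-1).\mf{e},\mf{m},d)=\Wg^{\Ort}(\mf{e},(i,2k-1).\mf{m},d)=\Wg^{\Ort}((i,2k-1).\mf{m},d)$, and equate the two evaluations of $S$ to obtain \eqref{eq:orth_ind}. The main obstacle I anticipate is the enumeration of admissible $\mf{p}$: unlike the unitary setting, each coincidence $i=r$ spawns \emph{two} (not one) extra pair partitions because a four-element value class has three rather than two perfect matchings, and one must verify carefully that these two extras are indexed precisely by the transpositions $(2r-1,2k-1)$ and $(2r,2k-1)$ acting on $\mf{e}$, so that as $r$ ranges over $\{1,\ldots,k-1\}$ the resulting $(i,2k-1).\mf{e}$ cover all values $i=1,\ldots,2k-2$ exactly once.
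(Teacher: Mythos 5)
Your proposal is correct and follows essentially the same route as the paper's proof: evaluate the sum $\sum_{i=1}^d$ of the integrals two ways, collapse it via the orthogonality of $U$ to get the $\delta_{\{2k-1,2k\}\in\mf{m}}\Wg^{\Ort}(\mf{m}^{\downarrow},d)$ term, enumerate the three matchings of the four-element value class to produce $\mf{e}$, $(2r-1,2k-1).\mf{e}$, $(2r,2k-1).\mf{e}$, and finish with Lemma \ref{lemma:WgO_invariant}. The only (immaterial) difference is that you place the strongly admissible sequence on the column indices and the structured sequence $(1,1,\dots,k-1,k-1,i,i)$ on the rows, whereas the paper does the reverse.
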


\begin{proof}
In the present proof, we abbreviate as
$\Wg(\mf{m},\mf{n})=\Wg^{\Ort}(\mf{m},\mf{n},d)$
and $\Wg(\mf{m})=\Wg^{\Ort}(\mf{m},d)$.
Let $\mathbf{i}=(i_1,\dots,i_{2k})$ be a strongly admissible 
sequence for $\mf{m}$.
Consider the sum of integrals
\begin{equation} \label{eq:sum_orthogonal}
\sum_{i=1}^d \int_{\Ort(d)} (\prod_{r=1}^{k-1} u_{i_{2r-1}, r} u_{i_{2r},r}) \cdot
 u_{i_{2k-1},i} u_{i_{2k},i} \, d\mu.
\end{equation}
Since $U =(u_{ij})$ is orthogonal, we have $\sum_{i=1}^d
 u_{i_{2k-1},i} u_{i_{2k},i}  =\delta_{i_{2k-1},i_{2k}}$,
which is equal to $\delta_{\{2k-1,2k\} \in \mf{m}}$.
Note that, if $\{2k-1,2k\} \in \mf{m}$, the sequence $(i_1,\dots,i_{2k-2})$ is strongly
admissible for $\mf{m}^{\downarrow}$.
Therefore \eqref{eq:sum_orthogonal} equals
$$
\delta_{\{2k-1,2k\} \in \mf{m}}
\int_{\Ort(d)} (\prod_{r=1}^{k-1} u_{i_{2r-1}, r} u_{i_{2r},r})\, d\mu
= \delta_{\{2k-1,2k\} \in \mf{m}} \Wg(\mf{m}^{\downarrow}).
$$

On the other hand, 
the sequence $\mathbf{j}=(1,1,2,2,\dots,k-1,k-1, i,i)$ is 
admissible for $\mf{e}$.  
Moreover, if $i <k$, the sequence $\mathbf{j}$ is admissible for 
$(2i-1, 2k-1).\mf{e}$ and $(2i, 2k-1). \mf{e}$.
Hence, it follows  from
Lemma \ref{lemma:Orthogonal_Wg} that:
\begin{itemize}
\item if $i \ge k$, then
	$$
	\int_{\Ort(d)} (\prod_{r=1}^{k-1} u_{i_{2r-1}, r} u_{i_{2r},r}) \cdot
  u_{i_{2k-1},i} u_{i_{2k},i} \, d\mu 
  =\Wg (\mf{m}, \mf{e}) =\Wg (\mf{m});
  $$
\item if $i <k$, then 
	\begin{align*}
	&\int_{\Ort(d)} (\prod_{r=1}^{k-1} u_{i_{2r-1}, r} u_{i_{2r},r}) \cdot
  u_{i_{2k-1},i} u_{i_{2k},i} \, d\mu \\
  &=\Wg (\mf{m},\mf{e})+ \Wg (\mf{m}, (2i-1,2k-1).\mf{e})
 + \Wg(\mf{m}, (2i,2k-1).\mf{e}) \\
 &= \Wg (\mf{m})+ \Wg( (2i-1,2k-1).\mf{m})
 + \Wg ((2i,2k-1).\mf{m}).
	\end{align*}	
\end{itemize}
Summing up over $i$, the equation \eqref{eq:sum_orthogonal} equals
$$
d \Wg(\mf{m})+ \sum_{j=1}^{2k-2} \Wg((j,2k-1). \mf{m}). 
$$
\end{proof}

\begin{example}
We again abbreviate as
$\Wg(\mf{m})=\Wg^{\Ort}(\mf{m},d)$.
Equation \eqref{eq:orth_ind} with $k=2$
gives three identities
\begin{align*}
d \Wg( \{1,2\}\{3,4\})=& 
- \Wg (\{1,3\}\{2,4\}) 
- \Wg (\{1,4\}\{2,3\}) 
+ \Wg( \{1,2\});  \\
d \Wg( \{1,3\}\{2,4\})=& 
- \Wg (\{1,2\}\{3,4\}) 
- \Wg (\{1,3\}\{2,4\}); \\
d \Wg( \{2,3\}\{1,4\})=& 
- \Wg(\{2,3\}\{1,4\}) 
- \Wg (\{1,2\}\{3,4\}). 
\end{align*}
The second and third identities imply that
$$
\Wg( \{1,3\}\{2,4\}) =
\Wg (\{2,3\}\{1,4\})= -\frac{1}{d+1} 
\Wg (\{1,2\}\{3,4\}).
$$
Furthermore, the first identity gives
\begin{align*}
d \Wg (\{1,2\}\{3,4\}) =& 
\frac{2}{d+1}\Wg (\{1,2\}\{3,4\}) 
+ \Wg( \{1,2\}),
\end{align*}
so that 
$$
\Wg (\{1,2\}\{3,4\}) = \frac{d+1}{(d+2)(d-1)} \cdot
\Wg (\{1,2\})= \frac{d+1}{(d+2) d(d-1)}.
$$
\end{example}

\subsection{Weingarten graphs} \label{subsection:Wg_graph_Ort}

The Weingarten graph for the orthogonal group
can be defined quite similar way to the unitary group.
We only mention the difference between them.
We consider the graph $\mathcal{G}^{\Ort}=(V,E)$.
The vertex set $V$ is $\bigsqcup_{k=0}^\infty \Pair{2k}$.
For convenience, we set $\Pair{0}= \{\emptyset\}$
with the `empty pair partition' $\emptyset=\mf{e}_0$.
Two vertices $\mf{m}$, $\mf{n}$ of level $k$ (i.e. in $\Pair{2k}$)
are connected by a {\em solid arrow} as $\mf{m} \rightarrow \mf{n}$
if and only if 
$$
\mf{n}= (i,2k-1). \mf{m} \qquad \text{with some $i$ smaller than $2k-1$}.
$$
For each $k \ge 1$, a vertex $\mf{m}$ of level $k$ and a vertex $\mf{m}'$
of level $k-1$ are connected by a {\em dashed arrow} as
$\mf{m} \dasharrow \mf{m}'$ if and only if 
$\{2k-1,2k\} \in \mf{m}$ and $\mf{m}^{\downarrow} = \mf{m}'$.

We also consider a path $p=(\mf{m}_0,\mf{m}_1,\dots, \mf{m}_{l+k})$
as in the unitary case.
Denote by $P(\mf{m},l)$ the collection of 
all paths from $\mf{m} \in \Pair{2k}$ to $\emptyset$,
going through $l$ solid edges and $k$ dashed edges.

\begin{lemma} 
For $\mf{m} \in \Pair{2k}$, we have the formal expansion
\begin{equation}
\Wg^{\Ort}(\mf{m},d)= d^{-k} \sum_{l \ge 0} \#P(\mf{m},l) (-d^{-1})^l.
\end{equation}
\end{lemma}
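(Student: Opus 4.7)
The plan is to mimic, essentially verbatim, the proof of Lemma \ref{lemma:unitary-Wg-expansion1}, substituting pair partitions for permutations and using the orthogonality recurrence \eqref{eq:orth_ind} in place of \eqref{eq:recurrence-unitaryWg}. Dividing \eqref{eq:orth_ind} by $d$ rewrites the recurrence as
\begin{equation*}
\Wg^{\Ort}(\mf{m},d)
= \sum_{\mf{n}:\ \mf{m}\rightarrow \mf{n}} (-d^{-1})\,\Wg^{\Ort}(\mf{n},d)
+ \delta_{\{2k-1,2k\}\in \mf{m}}\, d^{-1}\, \Wg^{\Ort}(\mf{m}^{\downarrow},d),
\end{equation*}
which is exactly the statement that one step of the recursion traverses one edge of the Weingarten graph $\mathcal{G}^{\Ort}$ from \S\ref{subsection:Wg_graph_Ort}, with weight $-d^{-1}$ on each solid arrow and $+d^{-1}$ on each dashed arrow.

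Next, I would introduce $V=\bigsqcup_{k\ge 0}\Pair{2k}$, the space $\C^V$ with canonical basis $\{\delta_{\mf{m}}\}$, and the linear operator $Q$ defined by
\begin{equation*}
Q(\delta_{\mf{m}}) = \sum_{\mf{n}:\,\mf{m}\rightarrow\mf{n}} (-d^{-1})\,\delta_{\mf{n}} + \delta_{\{2k-1,2k\}\in\mf{m}}\, d^{-1}\,\delta_{\mf{m}^{\downarrow}} \quad (\mf{m}\neq\emptyset), \qquad Q(\delta_\emptyset)=\delta_\emptyset.
\end{equation*}
Defining the linear form $\Wg^{\Ort}(\delta_{\mf{m}}) := \Wg^{\Ort}(\mf{m},d)$ (with $\Wg^{\Ort}(\delta_\emptyset)=1$), the recurrence above becomes $\Wg^{\Ort}\circ Q = \Wg^{\Ort}$, hence $\Wg^{\Ort}\circ Q^{\circ N}=\Wg^{\Ort}$ for every $N$. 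The coefficient of $\delta_{\mf{n}}$ in $Q^{\circ N}(\delta_{\mf{m}})$ is the signed sum over walks of length $N$ from $\mf{m}$ to $\mf{n}$, with each walk weighted by $(-d^{-1})^{\#\text{solid}}(d^{-1})^{\#\text{dashed}}$. Since $\emptyset$ is absorbing and is the unique vertex of level $0$, a walk ending at $\emptyset$ must use exactly $k$ dashed edges (to drop from level $k$ to level $0$) and some number $l$ of solid edges; the collection of such walks that reach $\emptyset$ at time $l+k$ and then remain there is in bijection with $P(\mf{m},l)$.

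To conclude, I would take $N\to\infty$ and extract the $\delta_\emptyset$-component. Formally, the coefficient of each monomial $d^{-p}$ in $Q^{\circ N}(\delta_{\mf{m}})$ stabilises once $N>p+k$, so the limit $Q^{\circ\infty}(\delta_{\mf{m}})$ exists in $\C[[d^{-1}]]^V$ and its $\delta_\emptyset$-coefficient equals $d^{-k}\sum_{l\ge 0}\#P(\mf{m},l)(-d^{-1})^l$; applying $\Wg^{\Ort}$ yields the claim. For the analytic variant, one restricts to the $Q$-invariant subspace $\C^{V_n}$ with $V_n=\bigsqcup_{k=0}^n\Pair{2k}$, sets $P$ equal to the rank-one projection onto $\C\delta_\emptyset$ and $R=Q\circ(1-P)$, and checks in the $\ell^1$-norm that $\|R(x)\|\le (2n-1)d^{-1}\|x\|$ (each non-$\emptyset$ vertex of level $k\le n$ has at most $2k-2\le 2n-2$ solid and one dashed out-neighbour, each contributing $d^{-1}$ in absolute value), giving geometric convergence of $Q^{\circ N}$ for $d>2n-1$, exactly as in the unitary proof.

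The only real obstacle is bookkeeping: one must be careful that the weight of a path in $P(\mf{m},l)$ is indeed $d^{-k}(-d^{-1})^l$, and that walks that pass through $\emptyset$ and return are excluded by the absorbing convention $Q(\delta_\emptyset)=\delta_\emptyset$ together with the fact that the only vertex of level $0$ is $\emptyset$, so no walk can leave and then re-enter it. Once this is in place the proof copies over from the unitary case with no new ideas.
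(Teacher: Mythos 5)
Your proposal is correct and is exactly the paper's approach: the paper's proof of this lemma consists of the single remark that one repeats the proof of Lemma \ref{lemma:unitary-Wg-expansion1} verbatim, using the orthogonality relation \eqref{eq:orth_ind} in place of \eqref{eq:recurrence-unitaryWg}, which is precisely what you carry out (including the correct count of $2k-2$ solid out-neighbours in $\mathcal{G}^{\Ort}$ for the analytic variant).
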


\begin{proof}
It is same with the proof of Lemma \ref{lemma:unitary-Wg-expansion1}.
Use Lemma \ref{lemma:Ort_rel_Ort}.
\end{proof}

For each pair partition $\mf{m} \in \Pair{2k}$, we 
associate with the {\em coset-type} $\mu$,
which is an integer partition of $k$. 
We can see its definition in e.g.  \cite{matsumoto-ramanujan, matsumoto-rm2}.
Put $|\mf{m}|=k-\ell(\mu)$. 
For example, 
$|\mf{m}|=0$ if and only if  $\mf{m}=\mf{e}_k$.

We can observe the fact $|\mf{m}^{\downarrow}| =|\mf{m}|$ and 
$$
|\tau. \mf{m}| -|\mf{m}| \in \{-1,0,1\}
$$
for any transposition $\tau$ in $\Sy{2k}$.
We emphasis that the equality 
$|\tau. \mf{m}| = |\mf{m}|$ may happen,
different from the unitary case.
We have obtained the following expansion.
Note that, different from the unitary case,
the summation of the right hand side is alternating.

\begin{theorem} \label{thm:ort_Wg_expansion2}
For each $\mf{m} \in \Pair{2k}$, we have the formal expansion
$$
(-1)^{|\mf{m}|} d^{|\mf{m}|+k} \Wg^{\Ort} (\mf{m},d)=
\sum_{g \ge 0} \# P(\mf{m},|\mf{m}|+g) (-d)^{-g}.
$$
\end{theorem}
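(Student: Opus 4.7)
The plan is to show this theorem is essentially a re-indexing of the expansion given in the lemma immediately preceding it, namely
\[
\Wg^{\Ort}(\mf{m},d) = d^{-k}\sum_{l\ge 0} \#P(\mf{m},l)(-d^{-1})^l.
\]
The key step is to establish the vanishing range: $\#P(\mf{m},l) = 0$ for every $l < |\mf{m}|$. This lets us re-index the sum by $l = |\mf{m}| + g$ with $g \ge 0$, and then the expansion becomes
\[
\Wg^{\Ort}(\mf{m},d) = d^{-k}\sum_{g\ge 0}\#P(\mf{m},|\mf{m}|+g)(-d^{-1})^{|\mf{m}|+g}.
\]
Multiplying through by $(-1)^{|\mf{m}|} d^{|\mf{m}|+k}$ and using $(-1)^{|\mf{m}|}(-d^{-1})^{|\mf{m}|+g} = (-1)^g d^{-|\mf{m}|-g}\cdot d^{|\mf{m}|} = (-d)^{-g}\cdot d^{|\mf{m}|}\cdot d^{-|\mf{m}|}$ collapses the prefactors to give exactly the claimed identity.

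The vanishing claim itself is already justified by the observations just preceding the theorem statement: along any path $p=(\mf{m}_0,\dots,\mf{m}_{l+k})$ from $\mf{m}$ to $\emptyset$, dashed edges preserve $|\cdot|$ since $|\mf{n}^{\downarrow}|=|\mf{n}|$, while solid edges change $|\cdot|$ by at most $1$ in absolute value, since $|\tau.\mf{n}|-|\mf{n}|\in\{-1,0,+1\}$ for every transposition $\tau$. Since $|\mf{m}_0|=|\mf{m}|$ must be reduced to $|\mf{m}_{l+k}|=0$, and only solid edges can contribute to this net decrease of $|\mf{m}|$, at least $|\mf{m}|$ solid edges are needed. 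Hence $P(\mf{m},l)=\emptyset$ when $l<|\mf{m}|$, which is the one nontrivial input.

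I expect no substantive obstacle: in the unitary setting one had a strict parity condition forcing $l=|\sigma|+2g$, and it was this parity condition alone that produced the $d^{-2g}$ expansion. In the orthogonal setting, the weaker $\{-1,0,+1\}$ trichotomy replaces the $\{-1,+1\}$ dichotomy, so odd-$g$ terms survive and the series becomes alternating in $d^{-1}$ rather than a power series in $d^{-2}$; this is precisely why $(-d)^{-g}$ appears in the final formula rather than $d^{-2g}$. The only thing to keep straight is the careful tracking of signs when pulling the factor $(-1)^{|\mf{m}|}$ into the summation, which is handled by the identity $(-1)^{|\mf{m}|}(-1)^{|\mf{m}|+g}=(-1)^g$.
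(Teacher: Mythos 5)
Your proposal is correct and follows essentially the same route as the paper: it starts from the expansion $\Wg^{\Ort}(\mf{m},d)=d^{-k}\sum_{l\ge 0}\#P(\mf{m},l)(-d^{-1})^l$, uses the observations that $|\mf{m}^{\downarrow}|=|\mf{m}|$ and $|\tau.\mf{m}|-|\mf{m}|\in\{-1,0,1\}$ to conclude $\#P(\mf{m},l)=0$ for $l<|\mf{m}|$, and re-indexes by $l=|\mf{m}|+g$. (One cosmetic remark: in your displayed chain of equalities the intermediate expression carries a spurious factor $d^{|\mf{m}|}$ on the right-hand side only, but the final prefactor $(-d)^{-g}$ is computed correctly.)
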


\subsection{Monotone factorizations}

Consider a pair partition $\mf{m} \in \Pair{2k}$ and
a sequence $f=(\tau_1,\dots,\tau_l)$ of $l$ transpositions satisfying:
\begin{itemize}
\item $\tau_i=(s_i,2t_i-1)$ with $1 \le s_i < 2t_i-1 \le 2k-1$;
\item $k \ge t_1 \ge t_2 \ge \cdots \ge t_l \ge 1$;
\item $\mf{m}= (\tau_1 \cdots \tau_l). \mf{e}$.
\end{itemize}
Such a sequence $f$ is called a {\em monotone factorization} of length 
$l$ for $\mf{m}$.
We denote by $\mathcal{F}(\mf{m},l)$ the collection of these $f$.

\begin{lemma} \label{lemma:PvsF2}
Let $\mf{m} \in \Pair{2k}$.
For any nonnegative integer $l$, there exists
a $1$-to-$1$ correspondence between
$P(\mf{m},l)$ and $\mathcal{F}(\mf{m},l)$.
\end{lemma}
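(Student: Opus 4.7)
The plan is to mirror the proof of Lemma \ref{lemma:PvsF} in the orthogonal setting, with pair partitions replacing permutations. In one direction, given a path $p = (\mf{m}_0, \mf{m}_1, \ldots, \mf{m}_{k+l}) \in P(\mf{m}, l)$, I would locate its $l$ solid arrows $\mf{m}_{i_j - 1} \to \mf{m}_{i_j}$ in increasing order of $i_j$. Each such arrow sits at some level $t_j$, meaning $\mf{m}_{i_j - 1}, \mf{m}_{i_j} \in \Pair{2t_j}$, and by definition of a solid arrow there is a unique $s_j \in \{1, 2, \ldots, 2t_j - 2\}$ with $\mf{m}_{i_j} = (s_j, 2t_j - 1).\mf{m}_{i_j - 1}$. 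Setting $\tau_j = (s_j, 2t_j - 1)$, the fact that only dashed arrows strictly decrease the level and they do so by exactly one yields the monotonicity $k \ge t_1 \ge t_2 \ge \cdots \ge t_l \ge 1$.

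The only non-routine step is to verify that $\mf{m} = (\tau_1 \tau_2 \cdots \tau_l).\mf{e}_k$. For this I would introduce the inclusion $\iota: \Pair{2s} \hookrightarrow \Pair{2k}$ which appends the blocks $\{2s+1, 2s+2\}, \{2s+3, 2s+4\}, \ldots, \{2k-1, 2k\}$ to a pair partition of $\{1, \ldots, 2s\}$. Under $\iota$, a dashed arrow $\mf{m}_{i-1} \dasharrow \mf{m}_{i-1}^{\downarrow}$ becomes the identity (because the removed block $\{2t-1, 2t\}$ is precisely reintroduced by $\iota$), while a solid arrow $\mf{m}_{i_j - 1} \to \mf{m}_{i_j}$ becomes the action of $\tau_j$ on $\iota(\mf{m}_{i_j - 1})$, using the crucial observation that $\tau_j = (s_j, 2t_j - 1)$ fixes all indices $\ge 2t_j + 1$ and hence commutes with the inclusion $\iota$. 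Since the path terminates at $\emptyset$ which embeds as $\mf{e}_k$, composing all the transformations from right to left gives exactly $\mf{m} = \tau_1 \tau_2 \cdots \tau_l . \mf{e}_k$, so $f := (\tau_1, \ldots, \tau_l) \in \mathcal{F}(\mf{m}, l)$.

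For the inverse direction, given $f = (\tau_1, \ldots, \tau_l)$ with $\tau_j = (s_j, 2t_j - 1)$, I would construct $p$ by the same inductive case analysis as in the unitary proof. Set $\mf{m}_0 := \mf{m}$. If $t_1 = k$, apply a solid arrow to put $\mf{m}_1 := \tau_1.\mf{m}_0$. If $t_1 = k - r$ with $r \ge 1$, then the monotonicity of $f$ forces $\tau_2, \ldots, \tau_l$ to fix each index in $\{2t_1 + 1, \ldots, 2k\}$, so the relation $\mf{m} = (\tau_1 \cdots \tau_l).\mf{e}_k$ implies that $\mf{m}$ contains the blocks $\{2t_1+1, 2t_1+2\}, \ldots, \{2k-1, 2k\}$; this justifies taking $r$ successive dashed steps to reach a vertex in $\Pair{2t_1}$, followed by a solid arrow corresponding to $\tau_1$. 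Iterating with $\tau_2, \ldots, \tau_l$ eventually arrives at $\emptyset$ and yields the desired $p \in P(\mf{m}, l)$. The two maps are manifestly inverse to each other.

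The main (and essentially only) obstacle is the bookkeeping in the embedding argument of the second paragraph: one must check that the inequality $s_j < 2t_j - 1 < 2t_j + 1$ makes $\tau_j$ and $\iota$ commute, so that the reverse reading of the path yields the monotone factorization relation. Once this is established, the remaining arguments are direct orthogonal analogues of the unitary construction of Lemma \ref{lemma:PvsF}, and injectivity and surjectivity of the correspondence are immediate from the construction.
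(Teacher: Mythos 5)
Your proposal is correct and follows the same route as the paper, which simply transplants the proof of Lemma \ref{lemma:PvsF} to the orthogonal setting; your extra bookkeeping (the embedding $\iota$ commuting with $\tau_j=(s_j,2t_j-1)$ because $s_j<2t_j-1$, and the monotonicity forcing the trailing blocks $\{2t_1+1,2t_1+2\},\dots,\{2k-1,2k\}$ to lie in $\mf{m}$) is exactly the content the paper leaves implicit. The uniqueness of $s_j$ for a given solid edge, which you assert, does hold (no $3$-cycle stabilizes a pair partition, so the $2k-2$ transpositions $(i,2k-1)$ send $\mf{m}$ to pairwise distinct targets), so the forward map is well defined.
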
 

\begin{proof}
It is the same with that of Lemma \ref{lemma:PvsF}.
\end{proof}

\subsection{Symplectic groups}

Consider the symplectic group
$$
\Smp(d)= \{ g \in \U (2d) \  | \ 
 g J  = J g\},
 \qquad \text{with} \quad
 J=J_d = \begin{pmatrix}  O_d & I_{d}\\
 -I_d & O_d 
 \end{pmatrix}.
$$
The Weingarten calculus for $\Smp(d)$ can be described in a similar way to
orthogonal groups $\Ort(d)$. 
We do not state the specific formula here 
and we are interested in only the absolute value of the Weingarten function.
The readers can see the exact formula in \cite{collins-stolz, matsumoto-rm2}.
For each pair partition $\mf{m} \in \Pair{2k}$, 
the symplectic Weingarten function 
$\Wg^{\Smp} (\mf{m},d)$ is given by 
$$
\pm \Wg^{\Smp} (\mf{m},d) = \Wg^{\Ort}(\mf{m},-2d),
$$
{\em up to sign}. 
Here the quantity $\Wg^{\Ort}(\mf{m},-2d)$
is obtained from the orthogonal Weingarten function $\Wg^{\Ort}(\mf{m},d)$
by replacing $d$ with $-2d$ formally. 
Therefore, from Theorem \ref{thm:ort_Wg_expansion2},
we can expand it 
 as follows.

\begin{theorem} \label{thm:sp_expansion}
For each $\mf{m} \in \Pair{2k}$, we have the formal expansion
$$
(2d)^{|\mf{m}|+k} |\Wg^{\Smp} (\mf{m},d)|
= \sum_{g \ge 0} \# P(\mf{m},|\mf{m}|+g) (2d)^{-g}.
$$
\end{theorem}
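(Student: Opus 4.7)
The plan is to derive Theorem \ref{thm:sp_expansion} directly from the orthogonal expansion in Theorem \ref{thm:ort_Wg_expansion2} via the substitution $d \mapsto -2d$ asserted in the displayed identity $\pm \Wg^{\Smp}(\mf{m},d) = \Wg^{\Ort}(\mf{m},-2d)$ which holds formally as Laurent series in $d^{-1}$. Since Theorem \ref{thm:ort_Wg_expansion2} is a formal identity of Laurent series in $d^{-1}$, the substitution is legitimate and can be carried out term by term.

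First I would take Theorem \ref{thm:ort_Wg_expansion2} and substitute $d \to -2d$ throughout. On the left-hand side one gets $(-1)^{|\mf{m}|}(-2d)^{|\mf{m}|+k}\Wg^{\Ort}(\mf{m},-2d)$, which after pulling out signs equals $(-1)^{k}(2d)^{|\mf{m}|+k}\Wg^{\Ort}(\mf{m},-2d)$. On the right-hand side each monomial $(-d)^{-g}$ becomes $(-(-2d))^{-g} = (2d)^{-g}$, so the alternating signs that were present in the orthogonal expansion disappear and the resulting series has only nonnegative coefficients $\#P(\mf{m},|\mf{m}|+g)$.

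Next I would apply the relation $\pm\Wg^{\Smp}(\mf{m},d) = \Wg^{\Ort}(\mf{m},-2d)$ to rewrite the left-hand side in terms of $\Wg^{\Smp}(\mf{m},d)$. This produces the identity
\[
\pm (-1)^{k}(2d)^{|\mf{m}|+k}\Wg^{\Smp}(\mf{m},d) = \sum_{g\ge 0}\#P(\mf{m},|\mf{m}|+g)(2d)^{-g}.
\]
Finally, I would take absolute values on both sides. The right-hand side is a formal series with nonnegative coefficients in $(2d)^{-1}$, so when evaluated at positive $d$ it is nonnegative and the absolute value is superfluous; thus it equals $(2d)^{|\mf{m}|+k}|\Wg^{\Smp}(\mf{m},d)|$, yielding the desired formula.

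The only delicate point, and the one I would highlight explicitly, is the bookkeeping of signs in the substitution, to confirm that the alternating factor $(-d)^{-g}$ from Theorem \ref{thm:ort_Wg_expansion2} becomes $(2d)^{-g}$ after $d \mapsto -2d$; this is what makes the symplectic expansion a sum of nonnegative quantities rather than an alternating one, and it is the sole qualitative difference with the orthogonal case. There is no combinatorial obstacle, since the count $\#P(\mf{m},|\mf{m}|+g)$ is identical on both sides and is furnished by the shared Weingarten graph $\mathcal{G}^{\Ort}$ described in \S\ref{subsection:Wg_graph_Ort}.
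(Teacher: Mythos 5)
Your proposal is correct and follows exactly the route the paper takes: the paper derives Theorem \ref{thm:sp_expansion} directly from Theorem \ref{thm:ort_Wg_expansion2} via the formal substitution $d \mapsto -2d$ in the identity $\pm\Wg^{\Smp}(\mf{m},d) = \Wg^{\Ort}(\mf{m},-2d)$, with the sign bookkeeping $(-(-2d))^{-g} = (2d)^{-g}$ turning the alternating orthogonal series into a series with nonnegative coefficients, after which taking absolute values removes the residual global sign. Your write-up simply makes explicit the steps the paper leaves implicit.
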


\subsection{Uniform bounds}

\begin{theorem} \label{combinatorial-estimate2}
Let $k$ be a positive integer.
For any pair partition $\mf{m} \in \Pair{2k}$ and
nonnegative integer $g$, we have
\begin{align}
\# P(\sigma, |\mf{m}|+2g) & \ge (2k-2)^g \# P(\mf{m},|\mf{m}|),  \notag \\
\# P(\sigma, |\mf{m}|+g) & \le (12k^{7/2})^g \# P(\mf{m},|\mf{m}|).
\label{combinatorial-estimate2-2}
\end{align}
\end{theorem}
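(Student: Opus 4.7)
The proof closely parallels that of Theorem \ref{combinatorial-estimate}, so the plan is to adapt each step to the orthogonal setting while accounting for two structural differences. First, because solid edges in $\mathcal{G}^{\Ort}$ may preserve $|\cdot|$ --- these ``sideways'' moves have no unitary analogue --- the expansion in Theorem \ref{thm:ort_Wg_expansion2} involves $g$ in place of $2g$, which is reflected in the upper bound controlling $\#P(\mf{m},|\mf{m}|+g)$ rather than $\#P(\mf{m},|\mf{m}|+2g)$. Second, each vertex at level $r$ has $2r-2$ outgoing solid edges instead of $r-1$, which contributes the extra factor of $2$ turning $6k^{7/2}$ into $12k^{7/2}$.

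For the lower bound, I would mimic the easy estimate of \S\ref{subsec:proof}. Given $p \in P(\mf{m},l)$ with $l \ge |\mf{m}|$ and a transposition $\tau=(i,2k-1)$ with $i \in \{1,\dots,2k-2\}$, the sequence
\[
\tilde{p} = (\mf{m},\, \tau.\mf{m},\, \mf{m}=p_0,\, p_1,\, \dots)
\]
lies in $P(\mf{m},l+2)$, and the assignment $(\tau,p) \mapsto \tilde{p}$ is injective. This gives $(2k-2)\#P(\mf{m},l) \le \#P(\mf{m},l+2)$, and iterating $g$ times yields the left inequality.

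For the upper bound I would first establish two inputs. The first is the orthogonal analogue of Lemma \ref{lemma:Matsumoto-Novak}: $\#P(\mf{m},|\mf{m}|)$ equals a product of Catalan numbers indexed by the coset type $\mu$ of $\mf{m}$, following from Lemma \ref{lemma:PvsF2} together with the count of minimal monotone factorizations (\cite{matsumoto-novak}). The second is the neighborhood lemma $\#P(\tau.\mf{m},|\tau.\mf{m}|) \le 6k^{3/2}\#P(\mf{m},|\mf{m}|)$ for every transposition $\tau \in \Sy{2k}$. The action of $\tau$ on the coset type is of one of three kinds --- cut, join, or a genuinely new sideways transformation altering at most two parts while preserving $|\cdot|$ --- and in each case the resulting Catalan-product ratio is controlled by the Stirling bound used in Lemma \ref{lemma:neiborhood}.

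With these in hand I would induct on $g$. The base case $g=0$ is trivial. For $g \ge 1$, decompose $P(\mf{m},|\mf{m}|+g)$ by the position $j+1$ of its first non-downward solid edge $\rho \to \rho'$; since at most $|\mf{m}| \le k-1$ downward edges can precede it, $j$ ranges over at most $k$ values. The counterpart of the bijection \eqref{eq:decomp-Pj-rho} reads
\[
P_j(\mf{m},\rho,\rho',|\mf{m}|+g) \cong \tilde{P}_j(\mf{m},\rho) \times P(\rho',|\rho'|+g-\varepsilon),
\]
with $\varepsilon=1$ in the sideways case and $\varepsilon=2$ in the upward case. Combining the induction hypothesis applied to the second factor, the neighborhood lemma to pass from $\rho'$ back to $\rho$, the bound $2k-2$ on the number of $\rho \to \rho'$, the injectivity of $\bigsqcup_\rho \tilde{P}_j(\mf{m},\rho) \times P(\rho,|\rho|) \hookrightarrow P(\mf{m},|\mf{m}|)$, and summation over $j$, one arrives at
\[
\#P(\mf{m},|\mf{m}|+g) \le k \cdot (2k-2) \cdot 6k^{3/2} \cdot (12k^{7/2})^{g-1}\#P(\mf{m},|\mf{m}|) \le (12k^{7/2})^g \#P(\mf{m},|\mf{m}|).
\]
The main obstacle is the sideways case of the neighborhood lemma: one must verify that a transposition acts on the coset type by altering only boundedly many parts, so that the Catalan-product ratio retains its $O(k^{3/2})$ bound. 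Once this is secured, the remaining ingredients transfer verbatim from the unitary proof.
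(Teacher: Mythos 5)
Your proposal is correct and follows essentially the same route as the paper: the lower bound by prepending a back-and-forth pair of solid edges ($2k-2$ choices at the top level), and the upper bound by induction on $g$ using the Catalan-product formula for $\#P(\mf{m},|\mf{m}|)$, the $6k^{3/2}$ neighborhood bound, and the decomposition at the first solid edge with $|\mf{n}'|\ge|\mf{n}|$, with the two cases $\varepsilon\in\{1,2\}$ exactly as in the paper. The one point you flag as an obstacle --- the ``sideways'' case of the neighborhood lemma --- is in fact harmless, since a transposition acting within a single cycle of the coset graph without splitting it leaves the coset-type unchanged, so the Catalan-product ratio there is $1$.
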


\begin{theorem} \label{wg-symplectic-bound}
For any $\mf{m} \in \Pair{2k}$ and $d > 6 k^{7/2}$,
\begin{equation} \label{eq:symplecti-bound}
\frac{\#P(\mf{m},|\mf{m}|)}{1-\frac{k-1}{2d^2}} \le 
(2d)^{|\mf{m}|+k} |\Wg^{\Smp} (\mf{m},d)| \le
\frac{\# P(\mf{m},|\mf{m}|)}{1- \frac{6 k^{7/2}}{d}}.
\end{equation}
\end{theorem}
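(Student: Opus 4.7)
The plan is to derive Theorem \ref{wg-symplectic-bound} as a direct corollary of Theorem \ref{thm:sp_expansion} (which expands $(2d)^{|\mf{m}|+k}|\Wg^{\Smp}(\mf{m},d)|$ as $\sum_{g\ge 0}\#P(\mf{m},|\mf{m}|+g)(2d)^{-g}$) combined with the two-sided combinatorial estimates of Theorem \ref{combinatorial-estimate2}. The essential observation, which makes the symplectic case cleaner than the orthogonal one (where the corresponding series alternates in sign), is that every coefficient of this expansion is nonnegative, so the partial sums are monotone and a term-by-term bound is sharp up to the constants that appear in Theorem \ref{combinatorial-estimate2}.

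For the upper bound I would apply the inequality $\#P(\mf{m},|\mf{m}|+g)\le (12k^{7/2})^g\#P(\mf{m},|\mf{m}|)$ from \eqref{combinatorial-estimate2-2} to every summand, factor $\#P(\mf{m},|\mf{m}|)$ out, and sum the geometric series with ratio $12k^{7/2}/(2d)=6k^{7/2}/d$. Convergence requires exactly the hypothesis $d>6k^{7/2}$, and the closed form is precisely $\#P(\mf{m},|\mf{m}|)/(1-6k^{7/2}/d)$; as a byproduct this also justifies that the formal series of Theorem \ref{thm:sp_expansion} converges absolutely in the stated range.

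For the lower bound I would discard all odd-$g$ summands (which are nonnegative, hence only lower the sum) and apply $\#P(\mf{m},|\mf{m}|+2j)\ge (2k-2)^j\#P(\mf{m},|\mf{m}|)$ to the surviving even-index terms. This yields a geometric series in the ratio $(2k-2)/(2d)^2=(k-1)/(2d^2)$, whose closed form is exactly the left-hand side of \eqref{eq:symplecti-bound}. Convergence of this auxiliary series is ensured by $2d^2>k-1$, which is far weaker than $d>6k^{7/2}$, so the lower bound automatically holds throughout the stated range.

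I do not expect a genuine obstacle: the argument is a mechanical transport of the combinatorial inequalities through a term-positive expansion, structurally parallel to the proof of Theorem \ref{wg-estimate} in the unitary case. The only care point worth checking explicitly is at the very start — verifying that the $\pm$ ambiguity in $\Wg^{\Smp}(\mf{m},d)=\pm\Wg^{\Ort}(\mf{m},-2d)$ is absorbed by the absolute value, and that the substitution $d\mapsto-2d$ in Theorem \ref{thm:ort_Wg_expansion2} converts $(-d)^{-g}$ into $(2d)^{-g}$, thereby turning the alternating orthogonal series into the all-positive symplectic series that the rest of the argument leans on.
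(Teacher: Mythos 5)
Your proposal is correct and follows essentially the same route as the paper: the upper bound comes from applying $\#P(\mf{m},|\mf{m}|+g)\le (12k^{7/2})^g\,\#P(\mf{m},|\mf{m}|)$ termwise to the all-positive expansion of Theorem \ref{thm:sp_expansion} and summing the geometric series of ratio $6k^{7/2}/d$, while the lower bound discards the odd-degree terms and uses $\#P(\mf{m},|\mf{m}|+2g)\ge (2k-2)^g\,\#P(\mf{m},|\mf{m}|)$ to get the geometric series of ratio $(k-1)/(2d^2)$. Your closing remark about the sign ambiguity being absorbed by the absolute value and the substitution $d\mapsto -2d$ turning the alternating orthogonal series into a positive one is exactly the point on which the paper's derivation of Theorem \ref{thm:sp_expansion} rests.
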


\begin{proof}
This is a direct consequence from Theorems
\ref{thm:sp_expansion} and \ref{combinatorial-estimate2}.
The left estimate is obtained by ignoring odd degree terms: 
$$
(2d)^{|\mf{m}|+k} |\Wg^{\Smp} (\mf{m},d)| \ge 
(2d)^{|\mf{m}|+k} \sum_{g \ge 0} \# P(\mf{m}, |\mf{m}|+2g) (2d)^{-2g}.
$$
\end{proof}

Since the orthogonal Weingarten function is an alternating sum (Theorem \ref{thm:ort_Wg_expansion2}),
we obtain a slightly weaker upper bound.
We can also obtain a lower bound, but due to the fact that the orthogonal case involves signed sums, it is not as sharp as in the unitary or symplectic case.

\begin{theorem} \label{wg-orthogonal-bound}
For any $\mf{m} \in \Pair{2k}$ and $d > 12 k^{7/2}$,
$$
\# P(\mf{m},|\mf{m}|)\frac{1- \frac{24 k^{7/2}}{d}}{1- \frac{144 k^7}{d^2}}
\le 
(-1)^{|\mf{m}|}d^{|\mf{m}|+k} \Wg^{\Ort} (\mf{m},d) \le
\frac{\# P(\mf{m},|\mf{m}|)}{1- \frac{144 k^7}{d^2}}.
$$
\end{theorem}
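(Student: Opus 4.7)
The plan is to couple the formal expansion of Theorem \ref{thm:ort_Wg_expansion2} with the path-counting bound from Theorem \ref{combinatorial-estimate2}, exactly as in the symplectic case but with extra care since the orthogonal expansion is \emph{alternating}. Write $M := \# P(\mf{m},|\mf{m}|)$, set $y := 12 k^{7/2}/d$, and abbreviate $a_g := \# P(\mf{m},|\mf{m}|+g)\, d^{-g}$. The combinatorial estimate \eqref{combinatorial-estimate2-2} gives $0 \le a_g \le M y^g$, and Theorem \ref{thm:ort_Wg_expansion2} reads
\[
S \ :=\ (-1)^{|\mf{m}|} d^{|\mf{m}|+k} \Wg^{\Ort}(\mf{m},d)\ =\ \sum_{g \ge 0} (-1)^g a_g.
\]
The hypothesis $d > 12 k^{7/2}$ is precisely $y < 1$, which guarantees absolute convergence of the geometric majorant.

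For the upper bound, I would split by parity, $S = \sum_{g\text{ even}} a_g - \sum_{g\text{ odd}} a_g$, and simply drop the nonnegative subtracted terms, obtaining
\[
S\ \le\ \sum_{g\text{ even}} a_g\ \le\ M \sum_{g \ge 0} y^{2g}\ =\ \frac{M}{1-y^2}\ =\ \frac{M}{1-144k^7/d^2},
\]
which is the stated right-hand inequality.

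For the lower bound, I keep only the $g=0$ contribution from the even sum and bound the odd sum from above by its majorant:
\[
S\ \ge\ M - \sum_{g\text{ odd}} a_g\ \ge\ M - M \sum_{g\ge 0} y^{2g+1}\ =\ M\,\frac{1-y-y^2}{1-y^2}.
\]
To match the clean form in the statement, I note that under $d \ge 12 k^{7/2}$ we have $y \le 1$, hence $y^2 \le y$, hence $1 - y - y^2 \ge 1 - 2y = 1 - 24 k^{7/2}/d$; plugging in yields the claimed left-hand inequality.

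The only real obstacle relative to the symplectic argument is the sign alternation: one cannot invoke a single geometric-series bound but must instead split even and odd indices and tolerate a small slackening (replacing the sharper $1-y-y^2$ by $1-2y$) to obtain the form displayed in the theorem. This is precisely the reason the lower bound here is weaker than in Theorem \ref{wg-symplectic-bound}, and it requires no new combinatorial input beyond Theorems \ref{thm:ort_Wg_expansion2} and \ref{combinatorial-estimate2}.
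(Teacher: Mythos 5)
Your proposal is correct and follows essentially the same route as the paper's own proof: both split the alternating expansion of Theorem \ref{thm:ort_Wg_expansion2} by parity, drop the odd part for the upper bound, retain only the $g=0$ term of the even part for the lower bound, and use the geometric majorant $(12k^{7/2}/d)^g$ from \eqref{combinatorial-estimate2-2} together with $y^2\le y$ to pass from $1-y-y^2$ to $1-2y$. No substantive difference.
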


\begin{proof}
From Theorem \ref{thm:ort_Wg_expansion2} we see that 
the positive value $(-1)^{|\mf{m}|} d^{|\mf{m}|+k} \Wg^{\Ort} (\mf{m},d)$ is equal to
$$
\sum_{g \ge 0} \#P(\mf{m},|\mf{m}|+2g) d^{-2g} - 
\sum_{g \ge 0} \#P(\mf{m},|\mf{m}|+2g+1) d^{-(2g+1)}.
$$
Applying \eqref{combinatorial-estimate2-2}
to the first summand and ignoring the second summand, we have 
$$
(-1)^{|\mf{m}|} d^{|\mf{m}|+k} \Wg^{\Ort} (\mf{m},d) \le 
\# P(\mf{m},|\mf{m}|)\sum_{g \ge 0} c^{2g}= \frac{\# P(\mf{m},|\mf{m}|)}{1-c^2}
$$
with $c= 12k^{7/2} d^{-1} (<1)$.
On the other hand, applying \eqref{combinatorial-estimate2-2}
to the second summand and ignoring the first summand except the first term, we have
\begin{align*}
(-1)^{|\mf{m}|}d^{|\mf{m}|+k} \Wg^{\Ort} (\mf{m},d) & \ge 
\# P(\mf{m},|\mf{m}|)- 
\# P(\mf{m},|\mf{m}|)\sum_{g \ge 0} c^{2g+1} \\
&= \# P(\mf{m},|\mf{m}|) \left(1-\frac{c}{1-c^2} \right) \ge 
\# P(\mf{m},|\mf{m}|) \cdot \frac{1-2c}{1-c^2}.
\end{align*}
\end{proof}

\subsection{Proof of Theorem \ref{combinatorial-estimate2}}

The proof is obtained in a similar way to subsection \ref{subsec:proof}.
We only mention the difference between them.
Note that the first inequality in Theorem \ref{combinatorial-estimate2}
can be obtained in a similar way to the unitary case.
In fact, we can choose $2k-2$ solid edges connected with  $\mf{m} \in \Pair{2k}$.

Let us show the second inequality of the theorem.
First we observe the explicit value for 
$P(\mf{m},|\mf{m}|)$.
Recall that the cardinality of $P(\mf{m},l)$ for 
the graph $\mathcal{G}^{\Ort}$ is different from
that of $P(\sigma,l)$ for 
the graph $\mathcal{G}^{\U}$ in general.
Nevertheless, the numbers of {\em shortest paths} in each case
coincide.

\begin{lemma}
\label{lemma:Matsumoto}
For $\mf{m} \in \Pair{2k}$ with coset-type 
$\mu=(\mu_1,\mu_2,\dots)$,
$$
\# P(\mf{m},|\mf{m}|) = 
\prod_{i=1}^{\ell(\mu)} \mathrm{Cat}(\mu_i-1).
$$
Therefore, for any transposition $\tau$ in $\Sy{2k}$, we have
$$
\# P(\tau. \mf{m},|\tau. \mf{m}|)
\le 6 k^{3/2} \# P(\mf{m},|\mf{m}|).
$$
\end{lemma}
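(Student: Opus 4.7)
My plan is to prove the two statements in sequence, following closely the strategy of Lemmas \ref{lemma:Matsumoto-Novak} and \ref{lemma:neiborhood} from the unitary case.

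For the first equality, I would begin by using the bijection of Lemma \ref{lemma:PvsF2} to reduce the count $\#P(\mf{m},|\mf{m}|)$ to the count $\#\mathcal{F}(\mf{m},|\mf{m}|)$ of shortest monotone factorizations. The key structural input is a block-multiplicativity: when $\mf{m}$ has coset-type $\mu=(\mu_1,\mu_2,\dots)$, each shortest monotone factorization of $\mf{m}$ decomposes (after relabeling) into independent shortest monotone factorizations of pair partitions with single-part coset-types $(\mu_i)$. This mirrors the additivity $|\mf{m}|=\sum_i (\mu_i-1)$, together with the fact that any transposition mixing two distinct coset-type blocks would strictly increase the length and so cannot appear in a shortest factorization. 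Hence $\#\mathcal{F}(\mf{m},|\mf{m}|) = \prod_i \#\mathcal{F}(\mf{m}_i,\mu_i-1)$ for single-block pair partitions $\mf{m}_i$, and it remains to evaluate each factor as $\mathrm{Cat}(\mu_i-1)$. The latter is the orthogonal analog of the single-cycle Catalan identity and is established in \cite{matsumoto-novak, matsumoto-ramanujan}.

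For the ratio bound on $\#P(\tau.\mf{m},|\tau.\mf{m}|)/\#P(\mf{m},|\mf{m}|)$, I would argue exactly as in Lemma \ref{lemma:neiborhood}. Since $|\tau.\mf{m}|-|\mf{m}| \in \{-1,0,1\}$, the coset-type of $\tau.\mf{m}$ is either unchanged, obtained from $\mu$ by splitting a part $\mu_r$ into $(i,j)$ with $i+j=\mu_r$ (a cut), or obtained by combining two parts $\mu_r,\mu_s$ into $\mu_r+\mu_s$ (a join). Substituting the product formula just established, the ratio is bounded by the same quantity already controlled by Robbins' Stirling inequalities in the proof of Lemma \ref{lemma:neiborhood}, namely
$$\max_{r+s+2 \le k} \left\{ \frac{\mathrm{Cat}(r+s+1)}{\mathrm{Cat}(r)\mathrm{Cat}(s)}, \ \frac{\mathrm{Cat}(r)\mathrm{Cat}(s)}{\mathrm{Cat}(r+s+1)} \right\} \le \frac{e^5}{\sqrt{8}\pi^2}\, k^{3/2} < 6 k^{3/2},$$
with the no-change case contributing the trivial ratio $1$.

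I expect the main obstacle to be the block-multiplicativity step in the first assertion: whereas in $\Sy{k}$ the cycle-type blocks are automatically preserved by any monotone factorization of minimal length, in the hyperoctahedral setting the transpositions in $\Sy{2k}$ need not respect the natural pair structure a priori. A clean route around this is the double-coset identification $\Pair{2k} \cong H_k \backslash \Sy{2k} / H_k$ with $H_k$ the hyperoctahedral group, under which the coset-type becomes the cycle-type of a canonical double-coset representative; this allows direct transfer of the already-handled unitary block decomposition of Lemma \ref{lemma:Matsumoto-Novak} to the orthogonal side, after which the ratio bound follows as an immediate corollary.
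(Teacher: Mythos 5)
Your proposal matches the paper's proof: the first identity is obtained exactly as you describe, via the path--factorization bijection of Lemma \ref{lemma:PvsF2} combined with the known product-of-Catalans formula for $\#\mathcal{F}(\mf{m},|\mf{m}|)$ from \cite[Theorem 5.4]{matsumoto-ramanujan} (the paper simply cites this rather than re-deriving the block-multiplicativity you sketch), and the second inequality is deduced by observing that the cut/join/unchanged behaviour of the coset-type under a transposition reduces the ratio to the same Catalan estimate already proved in Lemma \ref{lemma:neiborhood}. No gaps; your treatment of the ``coset-type unchanged'' case (ratio $1$) is the only point where the orthogonal setting differs from the unitary one, and you handle it correctly.
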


\begin{proof}
The first statement is seen in \cite[Theorem 5.4]{matsumoto-ramanujan}.
We also use Lemma \ref{lemma:PvsF2}.
The latter statement is shown in Lemma \ref{lemma:neiborhood}.
\end{proof}

Recall the Weingarten graph $\mathcal{G}^{\Ort}$.
Consider a vertex $\mf{m} \in \Pair{2k}$ and
a path $p=(\mf{m}_0,\dots, \mf{m}_{k+l}) \in P(\mf{m},l)$.
Suppose that $l>|\mf{m}|$ and that
two vertices $\mf{m}_i,\mf{m}_{i+1}$ in $p$ are connected 
by a solid arrow: 
 $\mf{m}_{i} \rightarrow \mf{m}_{i+1}$.
Different from the unitary case, it happens that
$|\mf{m}_{i+1}|-|\mf{m}_i|= -1$, $0$, or $+1$.
We write $j(p)=j$ if
the $(j+1)$-th solid edge $\mf{m}_{j+k-r} \rightarrow \mf{m}_{j+k-r+1}$ 
in $p$ (with some $r$)
is the first solid edge satisfying
$$
|\mf{m}_{i}| \le |\mf{m}_{i+1}|.
$$
The number $j(p)$ is well defined in $\{0,1,\dots, k-2\}$.
Then, the part $(\mf{m}_0,\dots,\mf{m}_{j+k-r})$ of $p$ 
goes through $j$ solid edges and $k-r$ dashed edges, and 
therefore  $\mf{m}_{j+k-r}$ and $\mf{m}_{j+k-r+1}$ are of levels $r$.
Furthermore,
$|\mf{m}_{j+k-r}| = |\mf{m}|-j$, and
$$
|\mf{m}_{j+k-r+1}|= |\mf{m}_{j+k-r}|  \qquad \text{or} \qquad
|\mf{m}_{j+k-r+1}|= |\mf{m}_{j+k-r}|+1.
$$
Like the unitary case, we define
$$
P_j(\mf{m},l) = \{p \in P(\mf{m}, l) \ | \ 
j(p)=j\}
$$
and 
$$
P_j(\mf{m},\mf{n}, \mf{n}', l) = \{
p=(\mf{m}_0,\dots,\mf{m}_{k+l}) \in P_j(\mf{m},l) \ | \ 
\mf{m}_{j+k-r}=\mf{n}, \ \mf{m}_{j+k-r+1}=\,\mf{n}'\}
$$
for pair partitions $\mf{n},\mf{n}'$.
This is nonempty only if
\begin{itemize}
\item $j=|\mf{m}|-|\mf{n}|$;
\item $\mf{n}$ and $\mf{n}'$ have the same level $r$ and are connected 
by a solid edge;
\item $|\mf{n}'| =|\mf{n}|$ or $|\mf{n}'| =|\mf{n}|+1$,
\end{itemize}
which should be compared with \eqref{eq:condition_j_rho}.

Let $g \ge 1$. We can obtain bijections
\begin{align*}
& P_j(\mf{m},\mf{n}, \mf{n}', |\mf{m}|+g) \\
& \cong 
\begin{cases}
\tilde{P}_j (\mf{m},\mf{n}) \times P(\mf{n}', |\mf{n}'|+g-2) &
\text{if $|\mf{n}'|=|\mf{n}|+1$}, \\
\tilde{P}_j (\mf{m},\mf{n}) \times P(\mf{n}', |\mf{n}'|+g-1)
&
\text{if $|\mf{n}'|=|\mf{n}|$,}
\end{cases}
\end{align*}
where
$\tilde{P}_j(\mf{m}, \mf{n})$ is, by definition, the collection of
all {\em partial paths} 
$q=(\mf{m}_0,\dots, \mf{m}_{j+k-r})$ 
from $\mf{m}=\mf{m}_0$ to $\mf{m}_{j+k-r}=\mf{n}$, 
going through
$j(=|\mf{m}|-|\mf{n}|)$ solid edges and $k-r$ dashed edges.

We shall show the second inequality in Theorem 
\ref{combinatorial-estimate2} by induction on $g$.
Using the induction assumption, 
a similar discussion to the unitary case gives
$$
\# P_j(\mf{m},\mf{n},\mf{n}',|\mf{m}|+g) \le 
\# \tilde{P}_j (\mf{m},\mf{n}) \cdot
(12 k^{7/2})^{g-1} \cdot 6 k^{3/2} \cdot \# P(\mf{n},|\mf{n}|).
$$
Together with the fact that, given $\mf{n} \in \Pair{2r}$,
there are $2r-2$ possibilities for $\mf{n}'$ (since 
$\mf{n}, \mf{n}'$ are connected by a solid edge), 
we obtain
$$
\# \bigcup_{\mf{n}'} P_j(\mf{m},\mf{n},\mf{n}',|\mf{m}|+g) \le 
\# \tilde{P}_j (\mf{m},\mf{n}) \cdot
(12 k^{7/2})^{g-1} \cdot 6 k^{3/2} \cdot 2k \cdot \# P(\mf{n},|\mf{n}|).
$$
The remaining discussion is same with the unitary case again.

\subsection{Discussion for 
right estimates}

In the left estimate of \eqref{eq:symplecti-bound},
we ignored the odd-degree terms.
If ones want to find a sharper estimate, 
we need to compare 
$\#P(\mf{m},|\mf{m}|+1)$ with
$\#P(\mf{m},|\mf{m}|)$.
In the present short subsection, we  observe the difficulty of
a direct comparison.

Let us recall an analogue of 
Lemma \ref{lemma:Matsumoto} for 
$\#P(\mf{m},|\mf{m}|+1)$.
If $I=(i_1,\dots,i_r)$ is a sequence 
of nonnegative integers, let us define
$\mathcal{D}_I$ as the set of Dyck paths 
of length $|I|:=i_1+\cdots+i_r$
whose height after $i_1, i_1+i_2,\dots$ steps is zero.
For each Dyck path $c \in \mathcal{D}_I$,
we denote by $\mathcal{A}(c)$
the area under $c$.
For example, for 
the Dyck path $c=(+1,-1,+1,-1) \in 
\mathcal{D}_{(2,2)}$, 
the area is 
$\mathcal{A}(c)=2$,
which is a sum of two triangles.

\begin{lemma}[partially conjectured by Matsumoto \cite{matsumoto-ramanujan} and proved by F\'{e}ray
\cite{feray-anncomb}]
If $\mu=(\mu_1,\dots,\mu_l)$ is the coset-type of 
$\mf{m} \in \Pair{2k}$, then we have the expression
$$
\#P(\mf{m},|\mf{m}|+1)= \sum_{c \in \mathcal{D}_{I_\mu}}
\mathcal{A}(c) =\sum_{i=1}^{\ell} 
\left( \sum_{c \in \mathcal{D}_{(\mu_i-1)}} \mathcal{A}(c) \right)
\prod_{j \not=i} \mathrm{Cat}(\mu_j-1)
$$
with $I_\mu=(\mu_1-1,\dots,\mu_l-1)$.
\end{lemma}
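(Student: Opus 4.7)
The plan is to split the identity into two parts: (i) the second equality (a purely combinatorial identity relating the single sum over $\mathcal{D}_{I_\mu}$ to the expanded sum over blocks of $\mu$), and (ii) the first equality (the enumerative content connecting paths in $\mathcal{G}^{\Ort}$ to Dyck paths with area weight). For (i), observe that every $c \in \mathcal{D}_{I_\mu}$ decomposes uniquely as a concatenation $c = c_1 c_2 \cdots c_\ell$ with $c_j \in \mathcal{D}_{(\mu_j - 1)}$, forced by the prescribed return-to-zero conditions at the cumulative positions $\mu_1-1,(\mu_1-1)+(\mu_2-1),\ldots$. Since $\mathcal{A}$ is additive across this concatenation and $\#\mathcal{D}_{(n-1)} = \mathrm{Cat}(n-1)$ by the standard count, expanding $\sum_c \mathcal{A}(c)$ directly yields the weighted-sum expression on the right of the lemma.

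For (ii), my first move is to apply Lemma \ref{lemma:PvsF2} to pass from $\#P(\mf{m},|\mf{m}|+1)$ to $\#\mathcal{F}(\mf{m},|\mf{m}|+1)$, reducing to enumeration of monotone factorizations. The next step is a \emph{splitting principle} modeled on the argument underlying Lemma \ref{lemma:Matsumoto}: a monotone factorization of length $|\mf{m}|+1$ decomposes canonically into sub-factorizations attached to each block of the coset type $\mu$. Since the total length exceeds the minimum by exactly one, and each block individually requires at least $\mu_j-1$ transpositions to be produced, exactly one block must carry an ``extra'' transposition while every other block carries a shortest factorization (counted by $\mathrm{Cat}(\mu_j-1)$). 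This reduces the claim to the single-cycle case: for a pair partition of coset type $(n)$, the number of monotone factorizations of length $n$ equals $\sum_{c \in \mathcal{D}_{(n-1)}} \mathcal{A}(c)$.

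The single-cycle identity is the crux. My plan is to extend the Dyck-path encoding of shortest monotone factorizations that underlies Lemma \ref{lemma:Matsumoto}: each shortest factorization determines a path $c \in \mathcal{D}_{(n-1)}$, and adjoining one extra transposition to the monotone sequence corresponds to marking a single unit cell lying strictly below $c$. The monotonicity requirement, together with the condition that the product of all $n$ transpositions yields $\mf{m}$, translates under this encoding into the geometric condition that the marked cell lies within the area under $c$, yielding the total count $\sum_c \mathcal{A}(c)$. The hard part will be proving that this cell-marking assignment is genuinely bijective: both injectivity (distinct factorizations mark distinct cells) and surjectivity (every cell arises from some factorization) require careful bookkeeping of how the extra transposition interacts with the monotone ordering. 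This is precisely the combinatorial content of F\'eray's argument in \cite{feray-anncomb}, which I would expect to route through an intermediate encoding by labeled trees or a suitable insertion algorithm, verifying the cell-by-cell correspondence. I would also test the formula against small cases (say $\mu=(3)$ and $\mu=(1,1)$) to catch sign or parity subtleties before committing to the general strategy.
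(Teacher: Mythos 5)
The paper itself offers no proof of this lemma: it is imported wholesale from Matsumoto \cite{matsumoto-ramanujan} and F\'eray \cite{feray-anncomb}, so there is nothing internal to compare against. Judged on its own terms, your proposal proves only the easy half. Part (i) is fine and complete: every $c\in\mathcal{D}_{I_\mu}$ factors uniquely as a concatenation of paths in $\mathcal{D}_{(\mu_j-1)}$, the area is additive across pieces that return to height zero, and expanding the product gives the right-hand expression. That is a genuine, self-contained argument for the second equality.

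The first equality, which is the actual content, is not established. Two gaps. (a) Your ``splitting principle'' is asserted by analogy with the argument ``underlying'' Lemma \ref{lemma:Matsumoto}, but that lemma is itself only cited in the paper, and in the pair-partition setting the localization of the extra transposition to a single block of the coset type needs proof. The right starting point is the observation that $|\tau.\mf{n}|-|\mf{n}|\in\{-1,0,+1\}$, so a path with $|\mf{m}|+1$ solid steps achieving total drop $|\mf{m}|$ must have exactly one $|\cdot|$-neutral solid step and all others strictly decreasing; one must then show the neutral step acts within one block and that the factorization splits blockwise. You do not do this. (b) The single-block case --- that monotone factorizations of length $|\mf{m}|+1$ of a one-block pair partition are counted by $\sum_{c\in\mathcal{D}_{(\mu_1-1)}}\mathcal{A}(c)$ --- is the theorem. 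Your ``cell-marking'' description is an educated guess at what F\'eray's bijection might look like: no map is constructed, no well-definedness is checked, and you explicitly defer both injectivity and surjectivity to the cited reference. As written, the proposal reduces the lemma to the very result it is supposed to prove, so it is a plan of attack rather than a proof.
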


Lemma \ref{lemma:Matsumoto} states the formula
$\#P(\mf{m},|\mf{m}|)= |\mathcal{D}_{I_\mu}|=\sum_{c \in \mathcal{D}_{I_\mu}}1$.
If $\mu \not=(1^k)$, then we obtain a trivial inequality 
\begin{align*}
\# P(\mf{m},|\mf{m}|+1) &
 \ge \left( \sum_{c \in \mathcal{D}_{(\mu_1-1)}} \mathcal{A}(c) \right)
\prod_{j \ge 2} \mathrm{Cat}(\mu_j-1) \\
& \ge \prod_{j \ge 1} \mathrm{Cat}(\mu_j-1) \\
& = \# P(\mf{m},|\mf{m}|).
\end{align*}
However, if $\mu=(1^k)$, then
$$
\# P(\mf{m},|\mf{m}|+1)=0 \qquad \text{and} \qquad
\# P(\mf{m},|\mf{m}|)=1.
$$
Thus, it is not clear to find a uniform estimate 
between
$\# P(\mf{m},|\mf{m}|+1)$ and $\# P(\mf{m},|\mf{m}|)$.

\section{Compact symmetric spaces}

Let $G/K$ be a classical compact symmetric 
space. We may assume that
$G$ is a compact matrix group and 
$K$ is a closed subgroup fixed by 
a so-called Cartan involution $\theta$ of $G$.
Then the space $G/K$ is 
identified with the subset 
$\mathbb{S}= \{g \theta (g)^{-1}
 \ | \ g \in G\}$
of $G$.
The group $G$ acts on $\mathbb{S}$ by 
$g.s = g s \theta(g)^{-1}$ 
$(g \in G, \ s \in \mathbb{S})$.
It is known that 
there exists the unique probability measure
$d\nu$ on $\mathbb{S}$, which is invariant 
under this action.
E. Cartan \cite{cartan} classified classical compact symmetric spaces into seven series,
which are labelled as A~I, A~II, A~III, BD~I, C~I, C~II, and D~III.

The Weingarten calculus of $G/K$ is 
the method for computations of integrals of the forms
$$
\int_{\mathbb{S}} s_{i_1 j_1} \cdots 
s_{i_k j_k} \, d\nu \qquad 
\text{or} \qquad 
\int_{\mathbb{S}} s_{i_1 j_1} \cdots 
s_{i_k j_k}
\overline{s_{i_1' j_1'} \cdots 
s_{i_l' j_l'}}
 \, d\nu,
$$
where $s_{ij}: \mathbb{S} \to \mathbb{C}$ is the $ij$-coordinate function. 
This is 
arose in \cite{collins-stolz} and 
much developed in  \cite{matsumoto-rm2} by applying harmonic analysis of symmetric groups.

In this section, we focus on two symmetric
spaces of types A~I and A~III.
As we did for unitary groups and 
orthogonal groups,
we will find orthogonal relations
for Weingarten functions of those types.
The main results are the following.
\begin{itemize}
\item For type A~I. We will recover the result
in \cite{matsumoto-rm1} in a a simpler way, which claims 
that the Weingarten function of type A~I 
is essentially same with the 
orthogonal Weingarten function.
\item For type A~III. We will obtain 
a combinatorial expansion for 
the A~III Weingarten function,
as like Theorems \ref{thm:unitary-Wg-expansion2}, \ref{thm:ort_Wg_expansion2}.
\end{itemize}
Our technique can be applied for 
compact symmetric spaces 
of remaining types A~II, CI, $\dots$.
However, there are additional technicalities that are intrinsic to any given type. 
Therefore, in this paper, for the sake of brevity, but yet show the power of 
the original Weingarten approach, we stick to two types.
We expect to handle other types in subsequent research.  

\subsection{AI case: COE}

Consider the compact symmetric space 
$\U (d)/ \Ort(d)$.
Then the set $\mathbb{S}=
\COE (d)$ consists of
all $d \times d$
symmetric unitary matrices.
The random matrix ensemble
$\{\COE (d), d\nu)\}_{d \ge 1}$
is referred to the circular orthogonal ensemble
(COE).

Assume $d \ge 2k$. The Weingarten calculus for the COE 
is described as follows.
For each pair partition $\mf{m} \in \Pair{2k}$, 
we define the Weingarten function 
$$
\Wg^{\COE} (\mf{m},d)= \int_{\mathrm{COE}(d)}
\prod_{j=1}^k s_{2j-1,2j}  \cdot \overline{\prod_{\{a,b\} \in \mf{m}} s_{a,b}} \, d\nu.
$$
Note that $s_{a,b}=s_{b,a}$ 
since a matrix in $\mathrm{COE}$ is symmetric.
Moreover, for each permutation 
$\sigma \in \Sy{2k}$, we put
$$
\Wg^{\mathrm{COE}}(\sigma,d) = \Wg^{\mathrm{COE}} (\sigma. \mf{e}_{k},d),
$$
where $\sigma.\mf{e}_k$ is 
the pair partition $\{\sigma(1),\sigma(2)\} \cdots \{\sigma(2k-1),\sigma(2k)\}$.

\begin{lemma}[\cite{matsumoto-rm1}] 
\label{lemma:COE-WC}
For two seqeunces
$$
\mathbf{i}=(i_1,\dots,i_{2k}), \qquad \mathbf{j}=(j_1,\dots,j_{2k})
$$
of positive integers in $\{1,2,\dots,d\}$, we have 
$$
\int_{\COE (d)} s_{i_1, i_2} \cdots s_{i_{2k-1}, i_{2k}}
\overline{s_{j_1, j_2} \cdots s_{j_{2k-1}, j_{2k}}} \, d\nu=
\sum_{\sigma \in \Sy{2k}} \delta_{\sigma}(\mathbf{i}, \mathbf{j}) 
\Wg^{\COE}(\sigma,d).
$$
Here the $\delta$-symbol is defined as in Lemma \ref{lemma:unitaryWC}.
\end{lemma}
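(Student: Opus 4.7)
The plan is to reduce the statement to the unitary Weingarten calculus (Lemma~\ref{lemma:unitaryWC}) via the Cartan realization of $\COE(d)$ as the image of $\U(d)$ under $g \mapsto g g^{\mathrm{t}}$. For the Cartan involution $\theta(g)=\bar g$ on $\U(d)$, one has $g\theta(g)^{-1} = gg^{\mathrm{t}}$, so this map identifies $\mathbb{S}=\COE(d) = \U(d)/\Ort(d)$; since the map is equivariant for the natural actions (left multiplication on $\U(d)$ corresponds to $S\mapsto hSh^{\mathrm{t}}$ on $\COE(d)$), the pushforward of $d\mu^{\U(d)}$ coincides with the unique invariant measure $d\nu$. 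Substituting $s_{ab} = \sum_c u_{ac}u_{bc}$, the left-hand side of the formula becomes
$$
\sum_{a_1,\dots,a_k,\,b_1,\dots,b_k=1}^{d} \int_{\U(d)} \prod_{r=1}^{k} u_{i_{2r-1},a_r}\, u_{i_{2r},a_r}\; \overline{\prod_{r=1}^{k} u_{j_{2r-1},b_r}\, u_{j_{2r},b_r}}\; d\mu.
$$

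Next I apply Lemma~\ref{lemma:unitaryWC} to the inner Haar integral, with row-index sequences $\mathbf{I}=(i_1,\dots,i_{2k})$, $\mathbf{J}=(j_1,\dots,j_{2k})$ and column-index sequences $\mathbf{A}=(a_1,a_1,a_2,a_2,\dots,a_k,a_k)$, $\mathbf{B}=(b_1,b_1,\dots,b_k,b_k)$. The crucial observation is that $\delta_{\tau}(\mathbf{A},\mathbf{B})$ depends only on $\tau$ and the auxiliary variables, not on $(\mathbf{I},\mathbf{J})$. Summing over $a_r, b_r$ first, the left-hand side takes the form
$$
\sum_{\sigma \in \Sy{2k}} \delta_{\sigma}(\mathbf{I},\mathbf{J})\, W(\sigma,d), \qquad W(\sigma,d) := \sum_{\tau \in \Sy{2k}} \Wg^{\U}(\sigma\tau^{-1},d)\, N(\tau),
$$
where $N(\tau) := \sum_{a,b} \delta_{\tau}(\mathbf{A},\mathbf{B})$ is a nonnegative integer depending only on $\tau, k, d$.

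Finally, I identify $W(\sigma,d)$ with $\Wg^{\COE}(\sigma,d)$ by specialization: choosing $\mathbf{I}=(1,2,\dots,2k)$ and $\mathbf{J}=(\sigma(1),\dots,\sigma(2k))$ (admissible because $d\ge 2k$), the symbol $\delta_{\sigma'}(\mathbf{I},\mathbf{J})$ equals $1$ iff $\sigma'=\sigma$, so the displayed identity collapses to $W(\sigma,d)$ on the right, while on the left we recover precisely the integral defining $\Wg^{\COE}(\sigma.\mf{e}_k,d) = \Wg^{\COE}(\sigma,d)$. I expect no hard step in this argument; the main conceptual point is recognizing that the Cartan embedding makes the COE Weingarten calculus a \emph{corollary} of the unitary one. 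In particular, the combinatorial quantity $N(\tau)$ (which would involve counting labeled assignments refining both $\mf{e}_k$ and $\tau.\mf{e}_k$) is never computed — the coefficient identification is achieved solely by the specialization, bypassing the obstacle that would arise in a direct combinatorial analysis.
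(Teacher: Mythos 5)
Your proposal is correct, but it cannot be ``the same as the paper's proof'' because the paper offers none: Lemma~\ref{lemma:COE-WC} is imported from \cite{matsumoto-rm1} as a black box, and in that reference the expansion is obtained through harmonic analysis on the Gelfand pair $(\Sy{2k},H_k)$ (zonal spherical functions), not by the route you take. Your derivation is a legitimate, self-contained alternative: the identification $\COE(d)=\{gg^{\mathrm{t}}:g\in\U(d)\}$ with $\pi_*\mu^{\U(d)}=\nu$ (by equivariance of $g\mapsto g\theta(g)^{-1}$ and uniqueness of the invariant measure), the substitution $s_{ab}=\sum_c u_{ac}u_{bc}$, and an application of Lemma~\ref{lemma:unitaryWC} in degree $2k$ with the doubled column sequences $\mathbf{A},\mathbf{B}$ all check out; the crucial point that $\sum_{a,b}\delta_\tau(\mathbf{A},\mathbf{B})=N(\tau)$ depends only on $\tau$ is what makes the $\tau$-sum collapse into a single coefficient $W(\sigma,d)$ attached to $\delta_\sigma(\mathbf{i},\mathbf{j})$. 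The identification $W(\sigma,d)=\Wg^{\COE}(\sigma,d)$ by specializing to $\mathbf{i}=(1,\dots,2k)$, $\mathbf{j}=(\sigma(1),\dots,\sigma(2k))$ is valid precisely because $d\ge 2k$ makes these indices distinct, so exactly one $\delta$-symbol survives and the left-hand side becomes the defining integral of $\Wg^{\COE}(\sigma.\mf{e}_k,d)$. What your approach buys is that the COE moment formula becomes a formal corollary of the unitary one, with no need to compute $N(\tau)$ or invoke representation theory; what it does not give (and what \cite{matsumoto-rm1} does) is a closed expression for $\Wg^{\COE}$ itself --- but that is not needed for the lemma as stated, and in this paper that information is supplied independently by Theorem~\ref{thm:COE}. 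One small presentational nit: you should state explicitly that the one-term collapse uses the injectivity of $\mathbf{i}=(1,\dots,2k)$, and that the equality $W=\Wg^{\COE}$ is then fed back into the general-index identity; as written this logic is compressed into the last two sentences.
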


\begin{lemma} \label{lemma:COE_Wg_invariant}
For any $\sigma, \zeta \in \Sy{2k}$, we have
$$
\Wg^{\COE}(\zeta^{-1} \sigma,d)=
\int_{\COE(d)} s_{\zeta(1), \zeta(2)} \cdots 
s_{\zeta(2k-1) \zeta(2k)}
\overline{s_{\sigma(1),\sigma(2)} 
\cdots s_{\sigma(2k-1), \sigma(2k)}} \, d\nu. 
$$
\end{lemma}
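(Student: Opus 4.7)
The plan is to apply Lemma \ref{lemma:COE-WC} directly to the integral on the right-hand side, choosing the two index sequences to be
$\mathbf{i}=(\zeta(1),\zeta(2),\dots,\zeta(2k))$ and $\mathbf{j}=(\sigma(1),\sigma(2),\dots,\sigma(2k))$.
Since $d \ge 2k$, these are legitimate sequences of positive integers in $\{1,\dots,d\}$, and crucially each has \emph{pairwise distinct} entries.

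With this specialization, Lemma \ref{lemma:COE-WC} rewrites the right-hand side of the claimed identity as
$$
\sum_{\tau \in \Sy{2k}} \delta_\tau(\mathbf{i},\mathbf{j})\, \Wg^{\COE}(\tau,d).
$$
The next step is to unwind the $\delta$-symbol. Recalling its definition from Lemma \ref{lemma:unitaryWC}, the condition $\delta_\tau(\mathbf{i},\mathbf{j})=1$ reads $i_{\tau(r)}=j_r$ for every $r$, i.e.\ $\zeta(\tau(r))=\sigma(r)$. Because $\zeta$ is a bijection, this single equation forces $\tau=\zeta^{-1}\sigma$; the distinctness of the entries of $\mathbf{i}$ is exactly what prevents any other permutation from contributing. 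All other terms vanish, so the sum collapses to $\Wg^{\COE}(\zeta^{-1}\sigma,d)$, which is the desired left-hand side.

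There is essentially no combinatorial obstacle here: the claim reduces to the observation that the Weingarten expansion of Lemma \ref{lemma:COE-WC} collapses to a single term for this choice of indices. A more conceptual alternative, which explains \emph{why} the collapse happens, is to use the $U(d)$-invariance of $d\nu$ under the action $s\mapsto U s U^{\mathrm{t}}$, specialized to the permutation matrix $U=P_{\zeta^{-1}}$ (so that $s_{ij}\mapsto s_{\zeta(i),\zeta(j)}$); this change of variables converts the integrand on the right-hand side directly into the defining integrand of $\Wg^{\COE}(\zeta^{-1}\sigma,d)$. The two approaches are equivalent, but the Weingarten-expansion route is entirely self-contained given the preceding lemma.
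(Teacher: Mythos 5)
Your primary argument is correct, but it takes a genuinely different route from the paper. You apply Lemma \ref{lemma:COE-WC} to the right-hand side with $\mathbf{i}=(\zeta(1),\dots,\zeta(2k))$ and $\mathbf{j}=(\sigma(1),\dots,\sigma(2k))$, and observe that because the entries of $\mathbf{i}$ are pairwise distinct, the only $\tau$ with $\delta_\tau(\mathbf{i},\mathbf{j})=1$ is the one determined by $\zeta(\tau(r))=\sigma(r)$ for all $r$, namely $\tau=\zeta^{-1}\sigma$; the expansion therefore collapses to the single term $\Wg^{\COE}(\zeta^{-1}\sigma,d)$. This is sound: $d\ge 2k$ is assumed throughout the subsection, so the index sequences are admissible, and the bijectivity of $\zeta$ and $\sigma$ gives the uniqueness you need. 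The paper instead proves the identity by the change of variables $(s_{ij})\mapsto(s_{\zeta(i),\zeta(j)})$, which preserves $d\nu$ because permutation matrices are unitary and $d\nu$ is invariant under the $\U(d)$-action; this is precisely the ``conceptual alternative'' you sketch in your last paragraph. The trade-off is that your main route is entirely mechanical once the cited expansion Lemma \ref{lemma:COE-WC} is granted, whereas the paper's route bypasses that lemma altogether and uses only the definition of $\Wg^{\COE}$ together with the invariance of the measure, so it is logically lighter and explains why the collapse occurs. Either proof is acceptable here.
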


\begin{proof} 
Recall the fact that the probability measure $d\nu$ is invariant under the action of $\U (d)$.
Since any permutation matrix is 
unitary, integrals are invariant
under the replacement 
$(s_{ij})_{1 \le i,j \le d} \mapsto 
(s_{\zeta(i), \zeta(j)})_{1 \le i,j \le d}$.
Thus we have the indentity
$$
\Wg^{\COE}(\sigma, d)= 
\int_{\COE(d)} s_{\zeta(1), \zeta(2)} \cdots 
s_{\zeta(2k-1) \zeta(2k)}
\overline{s_{\zeta \sigma(1),\zeta \sigma(2)} 
\cdots s_{\zeta\sigma(2k-1), \zeta\sigma(2k)}} \, d\nu.
$$
Replacing $\sigma$ by $\zeta^{-1} \sigma$,
we obtain the desired formula.
\end{proof}

We have the following orthogonality relation for $\Wg^{\COE}$.

\begin{lemma}
The Weingarten function $\Wg^{\COE}$ satisfies the following formula:
For each $\mf{m} \in \Pair{2k}$,
\begin{align*}
&(d+1) \Wg^{\COE}(\mf{m},d) \\
=& -\sum_{i=1}^{2k-2} \Wg^{\COE}((i,2k-1).\mf{m},d)
+ \delta_{\{2k-1,2k\}\in \mf{m}} \Wg^{\COE}(\mf{m}^{\downarrow},d).
\end{align*}
\end{lemma}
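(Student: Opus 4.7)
My plan is to mirror the proof of Lemma \ref{lemma:Ort_rel_Ort}, with one new ingredient responsible for the shift $d \mapsto d+1$: the symmetry $s_{ij}=s_{ji}$ of $\COE$ matrices. Let $q \in \{1,\dots,2k\} \setminus \{2k-1\}$ be the $\mf{m}$-partner of $2k-1$, so $\{q,2k-1\} \in \mf{m}$ and $q=2k$ exactly when $\{2k-1,2k\} \in \mf{m}$. Choose a listing $\mathbf{j}=(m_1,\dots,m_{2k-2},q,2k-1)$ of the blocks of $\mf{m}$ in which $\{q,2k-1\}$ sits at the last two positions. I would consider
\begin{equation*}
F=\sum_{i=1}^d\int_{\COE(d)} s_{1,2}s_{3,4}\cdots s_{2k-3,2k-2}\,s_{i,2k}\;\overline{s_{m_1,m_2}\cdots s_{m_{2k-3},m_{2k-2}}\,s_{q,i}}\;d\nu,
\end{equation*}
obtained from the integrand defining $\Wg^{\COE}(\mf{m},d)$ by replacing $s_{2k-1,2k} \mapsto s_{i,2k}$ on the unconjugated side and $\overline{s_{q,2k-1}} \mapsto \overline{s_{q,i}}$ on the conjugated side. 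Using the symmetry $s_{i,2k}=s_{2k,i}$ and the unitarity relation $\sum_i s_{2k,i}\overline{s_{q,i}}=\delta_{2k,q}$, the sum collapses to $F=\delta_{\{2k-1,2k\}\in\mf{m}}\,\Wg^{\COE}(\mf{m}^{\downarrow},d)$.

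On the other hand, I would expand each integrand via Lemma \ref{lemma:COE-WC} as $\sum_\sigma \delta_\sigma(\mathbf{I}^{(i)},\mathbf{j}^{(i)})\Wg^{\COE}(\sigma,d)$ with $\mathbf{I}^{(i)}=(1,2,\dots,2k-2,i,2k)$ and $\mathbf{j}^{(i)}=(m_1,\dots,m_{2k-2},q,i)$. A case analysis of the constraints $I^{(i)}_{\sigma(r)}=j^{(i)}_r$, organized by the values of $\sigma(2k)$ and $\sigma(2k-1)$, reveals three families of valid $\sigma$: (a) the canonical one with $\sigma(r)=m_r$ for $r\le 2k-2$, $\sigma(2k-1)=q$, $\sigma(2k)=2k-1$, valid for every $i$ and satisfying $\sigma.\mf{e}_k=\mf{m}$; (b) an exceptional one available only at $i=q$, fixing $2k-1$ and sending $2k\mapsto q$, again with $\sigma.\mf{e}_k=\mf{m}$; and (c) for each $i\in\{1,\dots,2k\}\setminus\{q,2k-1\}$, a unique ``swap'' $\sigma$ determined by the position $r_0\le 2k-2$ with $m_{r_0}=i$, producing $\sigma.\mf{e}_k=(i,2k-1).\mf{m}$. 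Adding up yields
\begin{equation*}
F=(d+1)\Wg^{\COE}(\mf{m},d)+\sum_{i\in\{1,\dots,2k\}\setminus\{q,2k-1\}}\Wg^{\COE}((i,2k-1).\mf{m},d).
\end{equation*}

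The hard part will be matching this index set with the statement's $\{1,\dots,2k-2\}$: the two coincide when $q=2k$, but differ by the swap $q\leftrightarrow 2k$ otherwise. To resolve the discrepancy I would invoke the $H_k$-invariance $\Wg^{\COE}(\eta.\mf{m},d)=\Wg^{\COE}(\mf{m},d)$ for every $\eta$ in the hyperoctahedral stabilizer $H_k$ of $\mf{e}_k$ in $\Sy{2k}$. This invariance follows from the $U(d)$-invariance of the COE Haar measure under $s\mapsto vsv^t$, specialized to permutation matrices $v$ whose underlying permutation lies in $H_k\subset\Sy{2k}\subset\Sy{d}$, since such a $v$ preserves the factorization $\prod_j s_{2j-1,2j}$. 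Applied with $\eta=(2k-1,2k)\in H_k$, it gives $\Wg^{\COE}((2k,2k-1).\mf{m},d)=\Wg^{\COE}(\mf{m},d)$, while trivially $(q,2k-1).\mf{m}=\mf{m}$ (the swap is interior to one block). Substituting $i=q$ for $i=2k$ aligns the index sets as $\sum_{i=1}^{2k-2}\Wg^{\COE}((i,2k-1).\mf{m},d)$, and equating the two evaluations of $F$ produces the stated identity.
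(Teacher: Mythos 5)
Your proposal is correct and follows essentially the same route as the paper: sum the unitarity relation $\sum_i s_{2k,i}\overline{s_{q,i}}=\delta_{2k,q}$ over the free index, expand each integral via Lemma \ref{lemma:COE-WC}, and use the symmetry-induced invariance of $\Wg^{\COE}$ under the stabilizer of $\mf{e}_k$ (the paper's Lemma \ref{lemma:COE_Wg_invariant}) to account for the shift $d\mapsto d+1$. The only difference is bookkeeping: the paper lists $\mf{m}$ with $\mf{m}(2k-1)=2k-1$ so that the extra unit comes from the $i=2k$ term via the invariance lemma, whereas you place the partner $q$ in position $2k-1$, pick up the extra unit from a second matching permutation at $i=q$, and then invoke the invariance to realign the index set --- the two accountings are equivalent.
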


\begin{proof}
Consider a pair partition 
$\mf{m}=\{\mf{m}(1),\mf{m}(2)\} \cdots \{\mf{m}(2k-1), \mf{m}(2k)\}$
and suppose $\mf{m}(2k-1)=2k-1$.
Fix such an expression of $\mf{m}$ and 
let $\sigma_{\mf{m}}$ be the permutation $j \mapsto \mf{m}(j)$.
Consider the sum of integrals
\begin{equation}
\sum_{i=1}^d J_i(\mf{m}) \label{eq:sum_COE}
\end{equation}
with
\begin{align*}
J_i(\mf{m})=
\sum_{i=1}^d \int_{\COE (d)} 
&s_{1,2} \cdots s_{2k-3,2k-2} s_{i, 2k}  \\
& \times \overline{s_{\mf{m}(1), \mf{m}(2)} \cdots s_{\mf{m}(2k-3), \mf{m}(2k-2)}
s_{i, \mf{m}(2k)}}
\, d\nu. 
\end{align*}
Since a matrix in $\COE(d)$ is unitary, we have 
$\sum_{i=1}^d s_{i,2k} \overline{s_{i, \mf{m}(2k)}}
= \delta_{\mf{m}(2k), 2k}$, and hence \eqref{eq:sum_COE} equals
\begin{align*}
& \delta_{\mf{m}(2k),2k} \int_{\COE (d)} 
s_{1,2} \cdots s_{2k-3,2k-2} \overline{s_{\mf{m}(1), \mf{m}(2)} \cdots s_{\mf{m}(2k-3), \mf{m}(2k-2)}}
\, ds \\
=& \delta_{\{2k-1,2k\} \in \mf{m}} \Wg^{\COE}( \mf{m}^{\downarrow}, d).
\end{align*}

On the other hand, 
using Lemma \ref{lemma:COE-WC} we have
\begin{align*}
J_i(\mf{m})=& \begin{cases}
\Wg^{\COE} (\sigma_{\mf{m}}, d) & 
\text{if $i \not\in \{1,2,\dots,2k-2,2k\}$} \\
\Wg^{\COE} (\sigma_{\mf{m}},d) + \Wg^{\COE}(\sigma[\mf{m}, i],d) & \text{
if $i \in \{1,2,\dots,2k-2,2k\}$},
\end{cases}
\end{align*}
where $\sigma[\mf{m},i]$ is the permutation defined by
\begin{align*}
\sigma[\mf{m},i] =&
\begin{pmatrix} 
1 & \cdots & r-1 & r & r+1 & \cdots & 2k-1 & 2k \\
\mf{m}(1) & \dots & \mf{m}(r-1) & 2k-1 & \mf{m}(r+1) &
\cdots & i & \mf{m}(2k)
\end{pmatrix} \\
=& (i, 2k-1) \sigma_{\mf{m}}
\end{align*}
with $r \in \{1,2,\dots, 2k-2,2k\}$ uniquely determined by $\sigma_{\mf{m}}(r)=i$.
If $i=2k$, we can observe  
$$
\Wg^{\COE}((2k-1,2k) \sigma_{\mf{m}},d)= \Wg^{\COE}( \sigma_{\mf{m}},d)
$$
by Lemma \ref{lemma:COE_Wg_invariant}.
Summing up them over $i$, we have obtained
\begin{align*}
\eqref{eq:sum_COE}=& (d+1) \Wg^{\COE}(\sigma_{\mf{m}},d)+ \sum_{i=1}^{2k-2} 
\Wg^{\COE} ((i,2k-1) \sigma_{\mf{m}},d) \\
=& (d+1) \Wg^{\COE}(\mf{m},d)+ \sum_{i=1}^{2k-2} 
\Wg^{\COE} ((i,2k-1). \mf{m},d).
\end{align*}
\end{proof}

Comparing this lemma with Lemma \ref{lemma:Ort_rel_Ort},
we find the fact that
the orthogonality relation for $\Wg^{\COE}(\mf{m},d)$
coincides with that for $\Wg^{\Ort}(\mf{m},d+1)$
in association with the shift for $d$.
This induces the following theorem immediately.

\begin{theorem} \label{thm:COE}
Suppose $d \ge 2k$. For any $\mf{m} \in \Pair{2k}$, we have
$$
\Wg^{\COE}(\mf{m},d) = \Wg^{\Ort}(\mf{m},d+1).
$$
\end{theorem}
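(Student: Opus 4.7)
The plan is to exploit the fact that the orthogonality relation derived in the preceding lemma for $\Wg^{\COE}$ is formally identical to the one for $\Wg^{\Ort}$ from Lemma \ref{lemma:Ort_rel_Ort} under the substitution $d \mapsto d+1$, together with a uniqueness argument for the underlying linear system. The structure is an induction on $k$, with a short diagonal-dominance argument handling the level-$k$ uniqueness.

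First, I would substitute $d' = d+1$ in the orthogonal recurrence of Lemma \ref{lemma:Ort_rel_Ort}. The right-hand side becomes identical in form to the COE recurrence established just above the statement: both families $\{\Wg^{\COE}(\mf{m},d)\}_{\mf{m} \in \Pair{2k}}$ and $\{\Wg^{\Ort}(\mf{m},d+1)\}_{\mf{m} \in \Pair{2k}}$ satisfy
$$(d+1)\, X_{\mf{m}} + \sum_{i=1}^{2k-2} X_{(i,2k-1).\mf{m}} = \delta_{\{2k-1,2k\}\in \mf{m}}\, X_{\mf{m}^{\downarrow}}.$$

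Second, I would proceed by induction on $k$. The base case $k=0$ is immediate, since both $\Wg^{\COE}(\emptyset,d)$ and $\Wg^{\Ort}(\emptyset,d+1)$ equal $\int 1\, d\text{(Haar)}=1$. For the inductive step, the hypothesis supplies equality at level $k-1$, so the inhomogeneous terms $\delta_{\{2k-1,2k\}\in\mf{m}} X_{\mf{m}^{\downarrow}}$ coincide on both sides.

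Third, setting $Y_{\mf{m}} := \Wg^{\COE}(\mf{m},d) - \Wg^{\Ort}(\mf{m},d+1)$ and subtracting the two recurrences yields the homogeneous system
$$(d+1) Y_{\mf{m}} = -\sum_{i=1}^{2k-2} Y_{(i,2k-1).\mf{m}}, \qquad \mf{m} \in \Pair{2k}.$$
Choose $\mf{m}^{\ast} \in \Pair{2k}$ maximizing $|Y_{\mf{m}}|$; the triangle inequality gives $(d+1)|Y_{\mf{m}^{\ast}}| \le (2k-2)|Y_{\mf{m}^{\ast}}|$, and since the hypothesis $d \ge 2k$ forces $d+1 > 2k-2$, we conclude $Y_{\mf{m}^{\ast}}=0$, hence $Y \equiv 0$.

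There is no real obstacle here beyond bookkeeping: one only needs to verify that the dimension hypothesis $d \ge 2k$ propagates down to $d \ge 2(k-1)$ for the inductive invocation (trivial) and to check the base case. The conceptual content is entirely contained in the observation that the two orthogonality recurrences agree under the shift $d \mapsto d+1$, which is the point of revisiting Weingarten's original strategy in this context.
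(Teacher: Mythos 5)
Your proposal is correct and follows essentially the same route as the paper: the paper likewise deduces the theorem from the observation that the COE orthogonality relation coincides with the orthogonal one under the shift $d \mapsto d+1$, asserting that this ``induces the theorem immediately.'' Your induction on $k$ together with the diagonal-dominance argument (valid since $d \ge 2k$ gives $d+1 > 2k-2$) merely makes explicit the uniqueness step that the paper leaves implicit, where it is otherwise supplied by the convergent path expansion over the Weingarten graph.
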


This theorem was first discovered in \cite{matsumoto-rm1} by
applying harmonic analysis of symmetric groups.
In our present proof, we could avoid those involving algebraic discussions.

\subsection{AIII case}

Let $a,b$ be positive integers.
Put $d =a+b$ and set
$$
d^-=a-b.
$$
Let us consider the compact symmetric space
$\U(d )/(\U(a) \times \U(b))$ of type A III.
The corresponding involution $\theta$ and 
matrix space $\mathbb{S}$ are 
$\theta(g)= I'_{ab} g I'_{ab}$ and
$\mathbb{S}=\{g I'_{ab} g^* I'_{ab} \ | \ 
g \in \U(d)\}$, respectively.
Here we set
$$
I_{ab}'=\mathrm{diag}(\underbrace{1,1,\dots,1}_a,
\underbrace{-1,-1,\dots,-1}_b).
$$
For convenience, we deal with 
$$
\tilde{\mathbb{S}}= \tilde{\mathbb{S}}(d,d^-)= \{s= gI'_{ab} g^* \ | \ 
g \in \U(d)\}
$$
instead of $\mathbb{S}$.
Any matrix in $\tilde{\mathbb{S}}$ is 
unitary and Hermitian.
The induced probability measure $d\nu$
on $\tilde{\mathbb{S}}$ is invariant under
the action 
$$
G \times \tilde{\mathbb{S}} \ni
(g_0, s) \mapsto g_0 s g_0^* \in \tilde{\mathbb{S}}.
$$

Suppose that $d \ge k$.
The A III Weingarten function is defined 
by
\begin{equation}
\Wg^{\mathrm{A \,III}} (\sigma,d,d^-)= \int_{\tilde{\mathbb{S}}(d,d^-)} s_{1\sigma(1)} s_{2\sigma(2) }
\cdots s_{k \sigma(k)} \, d\nu
\end{equation}
for $\sigma \in \Sy{k}$.
This is a conjugacy-invariant function 
on $\Sy{k}$.

\begin{lemma}[\cite{matsumoto-rm2}] \label{wg-AIII}
For two sequences 
$$
\mathbf{i}=(i_1,\dots,i_k), \qquad
\mathbf{j}=(j_1,\dots,j_k)
$$
of positive integers in $\{1,2,\dots,d\}$, 
we have 
\begin{equation}
\int_{\tilde{\mathbb{S}}(d,d^-)} s_{i_1 j_1} s_{i_2 j_2} \cdots s_{i_k j_k} \, d\nu
= \sum_{\sigma \in \Sy{k}} \delta_{\sigma}(\mathbf{i},\mathbf{j}) \Wg^{\mathrm{A\,III}}
(\sigma, d,d^-).
\end{equation}
\end{lemma}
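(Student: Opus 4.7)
My plan is to mimic the standard Schur--Weyl derivation of Lemma \ref{lemma:unitaryWC}, adapted to the symmetric space $\tilde{\mathbb{S}}(d,d^-)$. The key input is the $\U(d)$-invariance of the measure $d\nu$ under the action $s \mapsto g s g^*$, together with the assumption $d \ge k$ which guarantees linear independence of the permutation tensors.

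Define the tensor
\[
T_{\mathbf{i},\mathbf{j}} := \int_{\tilde{\mathbb{S}}(d,d^-)} s_{i_1 j_1} s_{i_2 j_2} \cdots s_{i_k j_k}\, d\nu,
\]
and view it as the matrix of an operator $T \in \End((\C^d)^{\otimes k})$. First I would observe that replacing $s$ by $gsg^*$ in the integrand and using the invariance of $d\nu$ yields, for every $g \in \U(d)$,
\[
T_{\mathbf{i},\mathbf{j}} = \sum_{\mathbf{a},\mathbf{b}} g_{i_1 a_1} \overline{g_{j_1 b_1}} \cdots g_{i_k a_k} \overline{g_{j_k b_k}} T_{\mathbf{a},\mathbf{b}},
\]
which is precisely the statement that $T$ commutes with $g^{\otimes k}$. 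Hence $T$ belongs to the commutant of $\U(d)$ acting diagonally on $(\C^d)^{\otimes k}$.

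By Schur--Weyl duality (and since $d \ge k$, so that the relevant permutation operators are linearly independent), the commutant is spanned by the operators $P_\sigma$ whose matrix entries are exactly the $\delta_\sigma$-symbols of Lemma \ref{lemma:unitaryWC}. Thus there exist unique scalars $c_\sigma(d,d^-)$ with $T = \sum_{\sigma \in \Sy{k}} c_\sigma(d,d^-) P_\sigma$, equivalently
\[
T_{\mathbf{i},\mathbf{j}} = \sum_{\sigma \in \Sy{k}} c_\sigma(d,d^-) \delta_\sigma(\mathbf{i},\mathbf{j}).
\]
To identify $c_\sigma$, I would specialize to $\mathbf{i} = (1,2,\dots,k)$ and $\mathbf{j} = (\sigma(1),\sigma(2),\dots,\sigma(k))$; because the entries of $\mathbf{i}$ are distinct, the equation $\delta_\tau(\mathbf{i},\mathbf{j}) = 1$ forces $\tau(r) = \sigma(r)$ for every $r$, so only the term $\tau = \sigma$ survives in the sum. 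Comparing with the definition of $\Wg^{\mathrm{A\,III}}$ gives $c_\sigma(d,d^-) = \Wg^{\mathrm{A\,III}}(\sigma,d,d^-)$.

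The main obstacle, such as it is, lies in justifying the Schur--Weyl step cleanly: one needs the correct pairing of co- and contravariant indices (the factor $s_{ij}$ transforms as an element of $\C^d \otimes \overline{\C^d}$, and the product of $k$ such factors sits in $(\C^d \otimes \overline{\C^d})^{\otimes k}$, whose $\U(d)$-invariants under the diagonal action are exactly $\End((\C^d)^{\otimes k})^{\U(d)}$). Once this identification is set up, the rest is bookkeeping identical to the unitary case. Linear independence of the $P_\sigma$ for $d \ge k$ is standard (via evaluation on distinct tuples), matching the hypothesis stated before the lemma.
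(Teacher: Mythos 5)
Your argument is correct. Note that the paper itself gives no proof of this lemma --- it is quoted from \cite{matsumoto-rm2} --- but your derivation (invariance of $d\nu$ under $s\mapsto g_0sg_0^*$ places the moment tensor $T$ in the commutant of the diagonal $\U(d)$-action on $(\C^d)^{\otimes k}$; Schur--Weyl duality expands $T$ over the permutation operators $P_\sigma$ whose entries are the $\delta_\sigma$-symbols; evaluating at $\mathbf{i}=(1,\dots,k)$, $\mathbf{j}=(\sigma(1),\dots,\sigma(k))$, which requires $d\ge k$, identifies the coefficients with $\Wg^{\mathrm{A\,III}}(\sigma,d,d^-)$) is exactly the standard route by which such Weingarten-type formulas are established in the cited reference and in \cite{collins-stolz}.
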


We introduce an operation $\sigma \mapsto \sigma^{\flat}$ as follows.
Let $k \ge 2$ and 
suppose that $\sigma \in \Sy{k}$ satisfies 
$\sigma(k)=:r \not=k$ and $\sigma(r)=k$.
In other words, the letter $k$ belongs to a $2$-cycle in $\sigma$.
If we remove the $2$-cycle $(r,k)$ from $\sigma$, the output is
a bijection on $T(r,k)=\{1,2,\dots,r-1,r+1,\dots,k-1\}$.
We then define the permutation $\sigma^{\flat}$ in $\Sy{k-2}$ by 
$\sigma^{\flat} = \iota_r \circ \sigma|_{T(r,k)} \circ \iota_r^{-1}$
with the order-preserved bijection 
$$
\iota_r:T(r,k) \to \{1,2,\dots,k-2\}.
$$
For example, if $\sigma \in \Sy{5}$ is
$$
\sigma= \begin{pmatrix} 1 & 2 & 3 & 4 & 5 \\
4 & 5 & 1 & 3 & 2 
\end{pmatrix}
$$
in the two-row notation, we have $r=2$ and  
$$
\sigma^{\flat} = \begin{pmatrix} 1 & 2 & 3  \\
3 & 1 & 2  
\end{pmatrix} \in \Sy{3}.
$$

\begin{lemma} 
\label{lemma:AIII_orthogonal}
Let $\sigma \in \Sy{k}$.
\begin{align*}
d \Wg^{\mathrm{A\, III}}(\sigma,d,d^-)= 
&-\sum_{i=1}^{k-1} \Wg^{\mathrm{A\, III}}((i,k) \sigma,d,d^-)
 \\
& + \delta_{\sigma(k)=k} d^- \Wg^{\mathrm{A\, III}}(\sigma^{\downarrow},d,d^-) 
\notag \\
& + \delta_{(\sigma(k),k) \in C(\sigma)} 
\Wg^{\mathrm{A\, III}}(\sigma^{\flat},d,d^-). \notag
\end{align*}
Here,
if $\sigma(k) \not=k$ and $\sigma^2(k)=k$, i.e, if $k$ belongs to a $2$-cycle of $\sigma$, then
we set
$\delta_{(\sigma(k),k) \in C(\sigma)}=1$.
\end{lemma}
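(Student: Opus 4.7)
The plan is to mirror the orthogonality-relation proofs for the unitary group (equation \eqref{eq:recurrence-unitaryWg}), the orthogonal group (Lemma \ref{lemma:Ort_rel_Ort}), and the COE: introduce a sum over $i \in \{1,\dots,d\}$ of an integral against $d\nu$, evaluate it two different ways, and match. For $s \in \tilde{\mathbb{S}}$ the structural identities that replace row/column orthogonality are: (i) $s^2 = I_d$, giving $\sum_i s_{a, i} s_{i, b} = \delta_{a, b}$; (ii) $\tr(s) = \tr(I_{ab}') = d^-$; and (iii) invariance of $d\nu$ under conjugation by permutation matrices, which allows one to relabel row/column indices inside integrals.

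Unlike the classical cases, a single integrand will not produce all three correction terms on the right-hand side, so I would split according to the role of $k$ in the cycle structure of $\sigma$, each with its own $k$-factor integrand. For $\sigma(k) = k$, take $\sum_i \int \prod_{r=1}^{k-1} s_{r, \sigma(r)} \cdot s_{i, i} \, d\nu$; pulling the sum inside and applying (ii) gives $d^- \Wg^{\mathrm{A\,III}}(\sigma^{\downarrow}, d, d^-)$. For $k$ in a $2$-cycle $(r, k)$, take $\sum_i \int \prod_{r' \notin \{r, k\}} s_{r', \sigma(r')} \cdot s_{r, i} s_{i, r} \, d\nu$; here $\sum_i s_{r, i} s_{i, r} = 1$, and after relabeling via (iii) through $\iota_r$ one recovers $\Wg^{\mathrm{A\,III}}(\sigma^{\flat}, d, d^-)$. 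For $\sigma(k) = r$ with $\sigma(r) = t \neq k$ (so the cycle of $\sigma$ containing $k$ has length at least $3$), take $\sum_i \int \prod_{r' \notin \{r, k\}} s_{r', \sigma(r')} \cdot s_{k, i} s_{i, t} \, d\nu$; then $\sum_i s_{k, i} s_{i, t} = \delta_{k, t} = 0$, so the sum vanishes identically. Crucially, each integrand has exactly $k$ factors of $s$, so Lemma \ref{wg-AIII} produces $\Sy{k}$ Weingarten functions on the other side.

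On that other side, I would apply Lemma \ref{wg-AIII} term-by-term and split the sum over $i$ into the sub-cases $i > k$, $i = k$, and $i \in \{1,\dots,k-1\}$, enumerating in each the permutations $\tau \in \Sy{k}$ with $\delta_\tau(\mathbf{i}_i, \mathbf{j}_i) = 1$. The central observation is that for generic $i > k$ there is a unique admissible $\tau$ whose cycle type coincides with that of $\sigma$, contributing $\Wg^{\mathrm{A\,III}}(\sigma, d, d^-)$; combined with the exceptional contributions from $i \leq k$, these assemble into $d \Wg^{\mathrm{A\,III}}(\sigma, d, d^-) + \sum_{p=1}^{k-1} \Wg^{\mathrm{A\,III}}((p, k) \sigma, d, d^-)$. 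Conjugacy-invariance of $\Wg^{\mathrm{A\,III}}$ is essential here: many admissible $\tau$'s are not literally $\sigma$ or $(p, k) \sigma$ but share their cycle types---for instance, $\sigma (p, k) \sigma^{-1} = (\sigma(p), \sigma(k))$ shows that $\sigma \cdot (p, k)$ and $(p, k) \sigma$ are conjugate and so have the same Weingarten value.

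The main obstacle is this combinatorial bookkeeping: in each of the three cases, and for each sub-case of $i$, one must identify every admissible $\tau$, determine its cycle type, and rewrite its Weingarten value in the canonical form required by the target. Once the bookkeeping is completed, equating the two evaluations and rearranging yields the stated recursion, with the $\sigma^{\downarrow}$ term appearing only in Case A, the $\sigma^{\flat}$ term only in Case B, and the transposition sum present in all three.
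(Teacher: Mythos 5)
Your overall strategy---sum a modified $k$-factor integrand over $i\in\{1,\dots,d\}$, evaluate it once using $s^2=I_d$ and $\Tr(s)=d^-$ and once using Lemma \ref{wg-AIII}, then equate---is exactly the paper's, and your Case A ($\sigma(k)=k$) coincides verbatim with the paper's first case. The difference lies in how $\sigma(k)\neq k$ is treated. The paper uses a \emph{single} integrand there: it replaces both occurrences of the letter $k$ in the index word of $\sigma$, summing $\prod_{m\neq r',k}s_{m,\sigma(m)}\cdot s_{r',i}\,s_{i,\sigma(k)}$ with $r'=\sigma^{-1}(k)$, so that $\sum_i s_{r',i}s_{i,\sigma(k)}=\delta_{r',\sigma(k)}$ is precisely the indicator $\delta_{(\sigma(k),k)\in C(\sigma)}$; this merges your Cases B and C into one computation, and because the replaced letter is $k$ the Weingarten side evaluates directly to $d\,\Wg^{\mathrm{A\,III}}(\sigma)+\sum_{p<k}\Wg^{\mathrm{A\,III}}((p,k)\sigma)$ with no conjugacy bookkeeping (for $i\geq k$ the unique admissible $\tau$ is $\sigma$; for $i<k$ exactly $\sigma$ and $(i,k)\sigma$ are admissible). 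Your Case B is this computation restricted to $2$-cycles, so it is fine. Your Case C, however, replaces the letter $r=\sigma(k)$ instead of $k$; the left-hand evaluation still vanishes, but the Weingarten side then yields $d\,\Wg^{\mathrm{A\,III}}(\sigma)+\sum_{i\leq k,\ i\neq r}\Wg^{\mathrm{A\,III}}((i,r)\sigma)$ rather than the target sum, and you must additionally verify that $\sum_{i\neq r}\Wg^{\mathrm{A\,III}}((i,r)\sigma)=\sum_{p\neq k}\Wg^{\mathrm{A\,III}}((p,k)\sigma)$. This is true: conjugating by the cycle $c$ of $\sigma$ containing $k$ (which satisfies $c\sigma c^{-1}=\sigma$ and $c(k)=r$) carries the multiset $\{(p,k)\sigma\}_{p\neq k}$ onto $\{(p',r)\sigma\}_{p'\neq r}$, so conjugacy-invariance finishes the job---but this is an extra step your sketch only gestures at, and the identity you cite, $\sigma(p,k)\sigma^{-1}=(\sigma(p),\sigma(k))$, is not quite the one needed. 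In short, the plan is sound and would close, but adopting the paper's choice of replaced letter eliminates the hardest part of your ``combinatorial bookkeeping'' and simultaneously explains where the indicator $\delta_{(\sigma(k),k)\in C(\sigma)}$ comes from.
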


\begin{proof}
First we assume $\sigma(k)=k$ and consider 
\begin{equation}  \label{eq:sum_AIII1}
\sum_{i=1}^d \int_{\tilde{\mathbb{S}}} s_{1 \sigma(1)}
s_{2 \sigma(2)} \cdots s_{k-1, \sigma(k-1)} s_{ii} \, d\nu.
\end{equation}
Since 
$\sum_{i=1}^d s_{ii}=\Tr (s)= \Tr (I'_{ab})= a-b=d^-$, the sum \eqref{eq:sum_AIII1}
equals 
$d^- \Wg (\sigma^{\downarrow}, d, d^-)$.
On the other hand, as in the unitary case,
we see that 
\begin{align*}
& \int_{\tilde{\mathbb{S}}} s_{1\sigma(1)} \cdots s_{k-1, \sigma(k-1)} s_{ii} \, d\nu \\
=&
\begin{cases}
 \Wg^{\mathrm{A\, III}}(\sigma,d,d^-) & \text{if $i \ge k$}, \\
 \Wg^{\mathrm{A\, III}}(\sigma,d,d^-) + 
 \Wg^{\mathrm{A\, III}}((i,k)\sigma,d,d^-) & \text{if $i < k$}
\end{cases}
\end{align*}
by Lemma \ref{wg-AIII},
and hence 
$$
\eqref{eq:sum_AIII1} = d  \Wg^{\mathrm{A\, III}}(\sigma,d,d^-) + 
\sum_{i=1}^{k-1} \Wg^{\mathrm{A\, III}}((i,k)\sigma,d,d^-).
$$
Thus we have obtained the desired equality 
for the case where $\sigma(k)=k$.

Next we assume $\sigma(k) \not=k$ and let $r=\sigma^{-1}(k) \in \{1,2,\dots,k-1\}$.
Consider 
\begin{equation}\label{eq:sum_AIII2}
\sum_{i=1}^d \int_{\tilde{\mathbb{S}}} s_{1\sigma(1)} \cdots 
s_{r-1, \sigma(r-1)} s_{r,i} s_{r+1, \sigma(r+1)} \cdots s_{k-1, \sigma(k-1)}
s_{i,\sigma(k)} \, d\nu.
\end{equation}
(Note that each term is obtained from
$s_{1\sigma(1)} \cdots s_{rk} \cdots s_{k\sigma(k)}$ by replacing
two $k$'s with $i$.)
Since
\begin{equation*} 
\sum_{i=1}^d s_{r i} s_{i t} = 
\sum_{i=1}^d s_{ri} \overline{s_{ti}} = \delta_{rt}
\end{equation*}
the sum \eqref{eq:sum_AIII2} equals 
\begin{align*}
&\delta_{r\sigma(k)} 
\int_{\tilde{\mathbb{S}}}
s_{1\sigma(1)} \cdots 
s_{r-1, \sigma(r-1)}  s_{r+1, \sigma(r+1)} \cdots s_{k-1, \sigma(k-1)} \, d\nu \\
=&  \delta_{(r,k) \in C(\sigma)} 
\Wg^{\mathrm{A\, III}}(\sigma^{\flat},d,d^-)
\end{align*}
by the definition of $\sigma^{\flat}$.
On the other hand,
it is easy to see that 
\eqref{eq:sum_AIII2} equals 
$$
d  \Wg^{\mathrm{A\, III}}(\sigma,d,d^-) + 
\sum_{i=1}^{k-1} \Wg^{\mathrm{A\, III}}((i,k)\sigma,d,d^-)
$$
by Lemma \ref{wg-AIII} again.
This completes the proof of the lemma.
\end{proof}

Let us consider the Weingarten graph 
$\mathcal{G}^{\mathrm{A \, III}}=(V,E)$ 
of type  A III.
\begin{itemize}
\item The vertex set $V$ is $\bigsqcup_{k=0}^\infty \Sy{k}$.
Each vertex $\sigma$ in $\Sy{k}$ is said to be of  {\it level} $k$.
\item For each $k \ge 2$, 
two vertices $\sigma, \tau$ of level $k$ are connected by a solid arrow if
$$
\tau = (i,k) \sigma \qquad \text{with some $i$ smaller than $k$}.
$$
We write $\sigma \rightarrow \tau$.
\item For each $k \ge 1$, a vertex $\sigma$ of level $k$
and a vertex $\sigma'$ of level $k-1$ are connected by a dashed arrow
if $\sigma(k)=k$ and 
$\sigma'=\sigma^{\downarrow}$.  
We write $\sigma \dasharrow \sigma'$.
\item For each $k \ge 2$, a vertex $\sigma$ of level $k$
and a vertex $\sigma''$ of level $k-2$ are connected by a {\em squiggled} arrow
if $\sigma(k) =:r \not=k$ and $\sigma (r)=k$, and moreover $\sigma'' = \sigma^{\flat}$. 
We write $\sigma \rightsquigarrow \sigma''$.
\end{itemize}
For example, as already observed, we have an squiggled arrow
$$
\Sy{5} \ni \begin{pmatrix} 1 & 2 & 3 & 4 & 5 \\
4 & 5 & 1 & 3 & 2 
\end{pmatrix} \ \rightsquigarrow \ \begin{pmatrix} 1 & 2 & 3  \\
3 & 1 & 2  
\end{pmatrix}\in \Sy{3}.
$$

A vertex $\sigma \in \Sy{k}$ has exactly $k-1$ solid edges and
at most $1$ dashed edge and at most $1$ squiggled  edge.
No vertex has both dashed edges and squiggled edges.

Let $\sigma \in \Sy{k}$ and 
consider a {\it path} $p$ 
from $\sigma$ to $\emptyset$
in $\mathcal{G}^{\mathrm{A\, III}}$ as usual.
But in this case, there are squiggled edges.
Denote by 
$$
\ell_0(p), \qquad \ell_1(p), \qquad \ell_2(p)
$$ the numbers of 
solid/dashed/squiggled
edges such that $p$ gets through, respectively. 
It is clear that
$$
\ell_1(p)+2 \ell_2(p)=k.
$$
Put $\ell(p)=\ell_0(p)+\ell_1(p)+\ell_2(p)$.

Due to Lemma \ref{lemma:AIII_orthogonal}
it is not difficult to see
the following theorem.

\begin{theorem} \label{thm:AIII}
Let $\sigma \in \mf{S}_k$. Then
$$
\Wg^{\mathrm{A \, III}} (\sigma,d,d^-)= \sum_{p: \sigma \to \emptyset}
(-1)^{\ell_0(p)}
(d^-)^{\ell_1(p)} d^{-
\ell(p)}.
$$
\end{theorem}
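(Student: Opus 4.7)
The plan is to mirror the argument used for the unitary Weingarten function in Lemma \ref{lemma:unitary-Wg-expansion1}, now in the setting of the Weingarten graph $\mathcal{G}^{\mathrm{A\,III}}$, which has three types of edges. First, divide both sides of Lemma \ref{lemma:AIII_orthogonal} by $d$ to obtain, for $\sigma \in \Sy{k}$ with $k \ge 1$,
$$
\Wg^{\mathrm{A\,III}}(\sigma,d,d^-) = \sum_{\tau: \sigma \rightarrow \tau} (-d^{-1}) \Wg^{\mathrm{A\,III}}(\tau,d,d^-)
+ \sum_{\sigma': \sigma \dashrightarrow \sigma'} (d^- d^{-1}) \Wg^{\mathrm{A\,III}}(\sigma',d,d^-)
+ \sum_{\sigma'': \sigma \rightsquigarrow \sigma''} d^{-1} \Wg^{\mathrm{A\,III}}(\sigma'',d,d^-).
$$
The weights $-d^{-1}$, $d^-d^{-1}$, and $d^{-1}$ on solid, dashed, and squiggled edges are precisely the factors whose products along a path $p$ produce $(-1)^{\ell_0(p)} (d^-)^{\ell_1(p)} d^{-\ell(p)}$.

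Second, define the linear operator $Q$ on $\C^V$ (with $V$ the vertex set of $\mathcal{G}^{\mathrm{A\,III}}$) extending
$$
Q(\delta_\sigma) = \sum_{\tau: \sigma \rightarrow \tau} (-d^{-1}) \delta_\tau
+ \delta_{\sigma(k)=k}\, (d^-d^{-1})\, \delta_{\sigma^\downarrow}
+ \delta_{(\sigma(k),k) \in C(\sigma)}\, d^{-1}\, \delta_{\sigma^\flat}
$$
for $\sigma \neq \emptyset$, and $Q(\delta_\emptyset) = \delta_\emptyset$. Let $\Wg$ be the linear form $\delta_\sigma \mapsto \Wg^{\mathrm{A\,III}}(\sigma,d,d^-)$. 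By the rewritten recurrence, $\Wg \circ Q = \Wg$, hence $\Wg \circ Q^{\circ l} = \Wg$ for every $l \ge 1$. The coefficient of $\delta_\tau$ in $Q^{\circ l}(\delta_\sigma)$ is the sum over all length-$l$ walks from $\sigma$ to $\tau$ in $\mathcal{G}^{\mathrm{A\,III}}$ of the product of edge weights; because $Q$ fixes $\delta_\emptyset$, any path $p \colon \sigma \to \emptyset$ of length $\ell(p)$ contributes $(-1)^{\ell_0(p)} (d^-)^{\ell_1(p)} d^{-\ell(p)}$ to $(Q^{\circ l}(\delta_\sigma))_\emptyset$ for all $l \ge \ell(p)$.

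Third, pass to the limit $l \to \infty$. Restricting to the finite-dimensional invariant subspace $\C^{V_k} \subset \C^V$ with $V_k = \bigsqcup_{j=0}^k \Sy{j}$, write $Q = P + R$ where $P$ is the rank-one projection onto $\C \delta_\emptyset$. Since each dashed edge strictly lowers the level by $1$ and each squiggled edge by $2$, at most $k$ non-solid edges can appear in any nonzero term of $R^{\circ l}(\delta_\sigma)$; combined with the bound $\|R_{\text{solid}}\| \le (k-1)d^{-1}$ from the solid part, this shows that $R^{\circ l}$ tends to $0$ in any norm for $d$ large enough, so $Q^{\circ l}$ converges to a limit $Q^{\circ \infty}$. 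Applying $\Wg \circ Q^{\circ \infty} = \Wg$ at $\delta_\sigma$ and using $\Wg(\delta_\emptyset) = 1$ then identifies $\Wg^{\mathrm{A\,III}}(\sigma,d,d^-)$ with the desired path sum.

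The main obstacle, compared with the unitary case, is the convergence: the dashed-edge weight $d^-/d$ has absolute value potentially as large as $1$, so one cannot simply bound $R$ in $\ell^1$-norm by $O(k/d)$ as in Lemma \ref{lemma:unitary-Wg-expansion1}. The way around this is to exploit that $\ell_1(p) \le k$ along any path, so the $(d^-)^{\ell_1}$ factor is uniformly bounded in $l$; equivalently, one may view $\Wg^{\mathrm{A\,III}}(\sigma,d,d^-)$ as a rational fraction in $d^{-1}$ with coefficients polynomial in $d^-$, and observe that each coefficient of the expansion in $d^{-1}$ stabilizes after a bounded number of iterations of $Q$. This yields the formal-power-series version of the identity, which upgrades to the stated identity once convergence is established.
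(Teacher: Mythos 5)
Your proposal is correct and follows essentially the route the paper intends: the paper's ``proof'' of Theorem \ref{thm:AIII} is just the remark that it follows from Lemma \ref{lemma:AIII_orthogonal} by the same iteration scheme as Lemma \ref{lemma:unitary-Wg-expansion1}, which is exactly what you carry out, with the correct edge weights $-d^{-1}$, $d^-d^{-1}$, $d^{-1}$. You also correctly identify and resolve the one genuine subtlety the paper glosses over, namely that the dashed-edge weight $d^-/d$ need not be small, which you handle by noting that any walk in $\mathcal{G}^{\mathrm{A\,III}}$ contains at most $k$ non-solid edges, so the remainder $R^{\circ l}$ still tends to zero for $d$ large.
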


\begin{example}
Consider $\sigma=[2,1] \in \Sy{2}$.
There are two kinds of paths from $\sigma$ to
$\emptyset$:
\begin{align*}
& [2,1] \rightarrow [1,2] \rightarrow [2,1] 
\rightarrow [1,2] \rightarrow \cdots \rightarrow [2,1] \rightsquigarrow \emptyset, 
\\
& [2,1] \rightarrow [1,2] \rightarrow [2,1] 
\rightarrow [1,2] \rightarrow \cdots \rightarrow [1,2] \dasharrow [1] \dasharrow
 \emptyset.
\end{align*}
The first one contributes to  the term 
$d^{-(2l+1)}$ and 
the second one contributed to $-(d^{-1})^2 d^{-(2l+3)}$. Thus we have the expansion
$$
\Wg^{\mathrm{A \, III}} ([2,1],d,d^-)=
\sum_{l \ge 0} d^{-(2l+1)} + \sum_{l \ge 0}(-1)
(d^{-})^2 d^{-(2l+3)}
$$
which equals $\frac{d^2-(d^-)^2}{d(d^2-1)}$.
\end{example}

In \cite{matsumoto-rm2}, 
we obtained the Fourier expansion
$$
\Wg^{\mathrm{A\, III}}(\sigma,d,d^-)= \frac{1}{k!}
\sum_{\lambda \vdash k} \frac{s_{\lambda}(1^a, (-1)^b)}{s_\lambda (1^d)} \chi^\lambda(\sigma),
$$
where $s_\lambda=s_{\lambda}(x_1,\dots,x_d)$ is the Schur function
and $\chi^\lambda$ is the irreducible character
of symmetric groups.
We know the formula 
$s_{\lambda} (1^d)= \frac{f^\lambda}{k!} \prod_{(i,j) \in \lambda}
(d+j-i) $, but there is no such closed formula for 
$s_{\lambda}(\underbrace{1,1,\dots,1}_a,
\underbrace{-1,-1,\dots,-1}_b)$.
Theorem \ref{thm:AIII} gives a new combinatorial expression
for the A~III Weingarten function.

\end{document}